\documentclass[reqno]{amsart}
\usepackage{amssymb, amsthm, amsfonts, amsmath, amscd}
\usepackage[utf8]{inputenc}
\usepackage[margin=1in]{geometry}  
\usepackage{graphicx}
\usepackage{tikz}
\usepackage{microtype}
\usepackage{mathrsfs}
\usepackage{hyperref}
\usepackage[toc,page]{appendix}
\usepackage{comment}
\usetikzlibrary{decorations.markings}

\newmuskip\pFqmuskip

\evensidemargin0cm \oddsidemargin0cm \textwidth15cm

\newcommand*\pFq[6][8]{
  \begingroup
  \pFqmuskip=#1mu\relax
  \mathcode`\,=\string"8000
  \begingroup\lccode`\~=`\,
  \lowercase{\endgroup\let~}\pFqcomma
  {}_{#2}F_{#3}{\left[\genfrac..{0pt}{}{#4}{#5};#6\right]}
  \endgroup
}
\newcommand{\pFqcomma}{\mskip\pFqmuskip}
\newcommand\myeq{\mathrel{\stackrel{\makebox[0pt]{\mbox{\normalfont\tiny def}}}{=}}}

\theoremstyle{definition}
\newtheorem{thm}{Theorem}[section]
\newtheorem{lem}[thm]{Lemma}
\newtheorem{df}[thm]{Definition}
\newtheorem{prop}[thm]{Proposition}
\newtheorem{cor}[thm]{Corollary}
\newtheorem{rem}[thm]{Remark}

\newtheorem{assum}[thm]{Assumption}

\newcommand{\C}{\mathbb{C}}

\newcommand{\R}{\mathbb{R}}
\newcommand{\Z}{\mathbb{Z}}
\newcommand{\Zp}{\mathbb{Z}_+}
\newcommand{\N}{\mathbb{N}}
\newcommand{\X}{\mathfrak{X}}
\newcommand{\DD}{\mathcal{D}}
\newcommand{\MM}{\mathcal{M}}

\newcommand{\p}{\mathfrak{p}}
\newcommand{\boldx}{\mathbf{x}}
\newcommand{\boldy}{\mathbf{y}}
\newcommand{\hatx}{\widehat{x}}

\newcommand{\hatl}{\widehat{l}}
\newcommand{\hatm}{\widehat{m}}
\newcommand{\hatn}{\widehat{n}}
\newcommand{\hatk}{\widehat{k}}

\newcommand{\TT}{\mathbb{T}}
\newcommand{\GT}{\mathbb{GT}}
\newcommand{\GTp}{\mathbb{GT}^+}

\newcommand{\free}{\textrm{free}}

\pagestyle{headings}

\numberwithin{equation}{section}

\begin{document}

\title[Markov processes on the duals\dots]{Markov processes on the Duals to Infinite-Dimensional Classical Lie Groups}

\author{Cesar Cuenca}
\address{Department of Mathematics, MIT, Cambridge, MA, USA}
\email{cuenca@mit.edu}

\date{}

\begin{abstract}
We construct a four parameter $z, z', a, b$ family of Markov dynamics that preserve the $z$-measures on the boundary of the branching graph for classical Lie groups of type $B, C, D$. Our guiding principle is the ``method of intertwiners'' used previously in \cite{BO2} to construct Markov processes that preserve the $zw$-measures.
\end{abstract}

\keywords{BC type z-measures, infinite dimensional spaces, intertwining processes, Doob h-transform}

\maketitle

\setcounter{tocdepth}{1}
\tableofcontents

\section{Introduction}

In recent years, plenty of works have dealt with probabilistic models which allow a thorough analysis due to their ``integrability''. The word ``integrability'' in this context means that one can express correlation functions or observables of the model by explicit algebraic formulas, which can then be analyzed in various limit regimes. Perhaps most well-known in the integrable probability world are the models arising from Schur and Macdonald processes, see \cite{BC, BG, Ok1, OR}.

The integrability of the probability measures we mentioned above have their roots in classical representation theory. In fact, the first (special cases of) Schur measures in the literature appeared in connection with the problem of harmonic analysis of the infinite symmetric group $S(\infty)$, as posed in \cite{KOV}, and are known as $z$-measures. Even more sophisticated objects, the $zw$-measures arise in connection with the problem of harmonic analysis of the infinite-dimensional unitary group $U(\infty)$. Many aspects of both the $z$-measures and $zw$-measures, as well as the probabilistic models associated to them, have been extensively studied in the last two decades, e.g., see \cite{BK, BO1, BO3, BO4, Ok2}.

All of the theory above comes actually from ``type A'' representation theory, i.e., the main players are the characters of the unitary groups (Schur polynomials) and the probability measures are defined on signatures (which parametrize irreducible representations of unitary groups) or arrays of them. Models whose underlying integrability is based on ``type BC'' representation theory have been less popular in the probability and integrable systems literature, though there are some interesting works showing that models of both types can be studied by similar techniques, but display interesting and different features, see e.g. \cite{BK, C, WZ}. Our hope is that the theory of integrable models with BC type integrability yield a wealth of applications to various branches of probability theory and mathematical physics, just like their counterparts of type A. In the present paper, we link stochastic dynamics to BC representation theory by constructing a family of Markov processes which preserve probability measures of BC representation theoretic origin.

Concretely, in this work we deal with probability measures that are the BC analogues of the $zw$-measures: they are rooted in the problem of harmonic analysis of the infinite-dimensional groups $O(\infty)$, $Sp(\infty)$ and the infinite-dimensional symmetric space $U(2\infty)/U(\infty)\times U(\infty)$. These measures will also be called $z$-measures, because they depend only on one pair of parameters $(z, z')$ as in the $S(\infty)$ case, instead of two pairs $(z, z'), (w, w')$, as in the $U(\infty)$ case. The $z$-measures corresponding to the three infinite-dimensional groups above can be defined over the same infinite-dimensional space $\Omega_{\infty}$. Interestingly, on such space one can define probability measures that depend on an extra set of real parameters $a, b > -1$ that originate from the weight function of the classical Jacobi polynomials. Therefore we can define $z$-measures depending on four parameters $(z, z', a, b)$; these probability measures have a representation theoretic meaning only for the pairs $(a, b) = (0, 0), (\frac{1}{2}, \frac{1}{2}), (\frac{1}{2}, -\frac{1}{2}), (-\frac{1}{2}, -\frac{1}{2})$, but their integrability remains for general $a, b$.

The main result of this paper is the construction of Markov dynamics on $\Omega_{\infty}$ that have the $z$-measures as their unique invariant measures. Importantly in the theory of harmonic analysis of big groups, the $z$-measures can be viewed as probability measures on point configurations in $\R_{>0} \setminus \{1\}$ with infinitely many particles (see \cite{BO1} for a deep study of the $zw$-measures from this point of view). The author has proved that the resulting point processes for $z$-measures are determinantal and their kernels have closed forms in terms of hypergeometric functions, \cite{C}. Thus the dynamics we construct on point configurations that preserve the determinantal structure is one of many that have been studied in the literature, mostly within the context of random matrix theory, see the introduction in \cite{BO2} and further references therein.

The technique employed to construct our Markov dynamics is based on the ``method of intertwiners'', which was first applied in \cite{BO2} for the hypergeometric process coming from $zw$-measures. To apply this method, it is essential to realize $\Omega_{\infty}$ as the boundary of a (graded) branching graph with formal edge multiplicities that depend on the parameters $(a, b)$. The goal then becomes to construct ``coherent'' Markov semigroups on the levels of the graph that preserve certain pushforwards of the $z$-measures. We construct the required semigroups on a general level N by using a Doob $h$-transformation of N birth-and-death processes. It is worth noting that our construction has many complications that are not present for the case of $zw$-measures. One is that our branching graph has edge multiplicities, unlike its type A analogue, and moreover there is no closed form for the edge multiplicities. Another one is that the ``jumping rates'' for our birth-and-death processes mentioned above are related to a family of orthogonal polynomials on a quadratic lattice, which are expressible in terms of the $_4F_3$ hypergeometric function. In the case of the $zw$-measures, the construction depends on similar birth-and-death processes, but the rates in that case are related to a family of orthogonal polynomials on a linear lattice and they are lower in the hierarchy of orthogonal polynomials.

Let us describe briefly the ingredients of two interesting parts of our plan. First, the argument for the invariance of certain pushforwards of the $z$-measures with respect to the semigroups we construct at finite levels of the BC branching graph makes a clever use of a finite family of orthogonal polynomials of a discrete variable on a quadratic lattice. These polynomials are eigenfunctions of the infinitesimal generator of the process at level one of the branching graph; they can be expressed in terms of the classical Wilson polynomials of a continuous variable \cite{W}, and they also appeared more recently in Neretin's work \cite{N}, reason why we will call them Wilson-Neretin polynomials. Second, the ``master relation'' which lies at the heart of the method of intertwiners, see $(\ref{intertwining})$ below, is proved by a brute-force computation involving a new binomial formula for ``shifted'' symmetric (Jacobi) polynomials, see Appendix $\ref{appendixA}$.

In the rest of this introduction, we give a more detailed account of our main result only for three of the special pairs $(a, b)$ mentioned above in order to avoid dealing with more complicated expressions. 

\subsection{BC branching graph and its boundary}\label{sec:boundaryintro}

Let $(a, b)$ be one of the pairs $(\frac{1}{2}, \frac{1}{2})$, $(\frac{1}{2} -\frac{1}{2})$, $(-\frac{1}{2}, -\frac{1}{2})$. For each positive integer $N$, let $\GTp_N$ be the set of $N$-tuples $\lambda$ of weakly decreasing nonnegative integers
\begin{equation*}
\lambda = (\lambda_1 \geq \lambda_2 \geq \ldots \geq \lambda_N\geq 0).
\end{equation*}
Elements of $\GTp_N$ are called positive $N$-signatures; they parametrize the irreducible characters of $Sp(2N)$, $SO(2N+1)$, as well as certain reducible characters of $O(2N)$ \footnote{Irreducible characters of $SO(2N)$ are parametrized by $N$-tuples of integers $(\lambda_1, \ldots, \lambda_N)$ satisfying $\lambda_1\geq\ldots\geq\lambda_{N-1}\geq |\lambda_N|$. The reducible character of $O(2N)$ that is associated to $\lambda_1\geq\ldots\geq\lambda_N\geq 0$ is the following sum of two irreducible characters: $\chi^{o(2N)}_{\lambda_1, \ldots, \lambda_N} \myeq \chi^{so(2N)}_{\lambda_1, \ldots, \lambda_N} + \chi^{so(2N)}_{\lambda_1, \ldots, -\lambda_N}$}. Given two positive signatures $\mu\in\GTp_N, \lambda\in\GTp_{N+1}$, we write
\begin{equation*}
\lambda \succ_{BC} \mu
\end{equation*}
if there exists $\nu\in\GTp_N$ such that the following inequalities hold
\begin{eqnarray*}
&&\lambda_1 \geq \nu_1 \geq \lambda_2 \geq \ldots \geq \nu_N \geq \lambda_{N+1}\\
&&\nu_1 \geq \mu_1 \geq \nu_2 \geq \ldots \geq \nu_N \geq \mu_N.
\end{eqnarray*}
It is known that the relation $\succ_{BC}$ has a representation theoretic meaning. In fact, let $G(N)$ be any one of the rank-$N$ Lie groups $SO(2N+1), Sp(2N), O(2N)$, for any $N\in\N$. For $\lambda\in\GTp_{N+1}$, denote by $\chi_{\lambda} = \chi_{\lambda}^{G(N+1)}$ the character of $G(N+1)$ parametrized by $\lambda$, as described above. By considering the natural embeddings $G(N)\hookrightarrow G(N+1)$, the restricted character $\chi_{\lambda}|_{G(N)}$ decomposes into a sum of characters $\chi_{\mu}$, $\mu\in\GTp_{N}$, where $\chi_{\mu}$ appears in the decomposition with certain multiplicity $m(\lambda, \mu)$. Then it is true that $m(\lambda, \mu) > 0$ if and only if $\lambda\succ_{BC}\mu$.

Next we describe a graph with vertex set $\GTp = \sqcup_{N\geq 1}{\GTp_N}$, whose vertices are partitioned into levels $1, 2, 3, \ldots$, and where the $N$th level consists of the vertices $\GTp_N$. The edges of the graph have multiplicities and they connect only vertices in adjacent levels: there is an edge between $\mu\in\GTp_N$ and $\lambda\in\GTp_{N+1}$ iff $\lambda\succ_{BC} \mu$, and moreover that edge has multiplicity $m(\lambda, \mu)$. We call the $\N$-graded graph just described the \textit{branching graph of classical Lie groups of type $B, C, D$}, or simply the \textit{BC branching graph}.

For each $\mu\in\GTp_N$ and $(a, b) = (\frac{1}{2}, \frac{1}{2})$ (resp. $(\frac{1}{2}, -\frac{1}{2}), (-\frac{1}{2}, -\frac{1}{2})$), let $\chi^{a, b}_{\mu}(1)$ be the character of $SO(2N+1)$ (resp. $Sp(2N), O(2N)$) parametrized by $\mu$ and evaluated at the identity matrix. Define $\chi^{a, b}_{\lambda}(1)$ similarly, for any $\lambda\in\GTp_{N+1}$. We consider the expressions
\begin{equation*}
\Lambda^{N+1}_N(\lambda, \mu) =
    \begin{cases}
        m(\lambda, \mu)\chi^{a, b}_{\mu}(1)/\chi^{a, b}_{\lambda}(1) & \textrm{if } \lambda\succ_{BC}\mu\\
        0 & \textrm{otherwise},
    \end{cases}
\end{equation*}
where $m(\lambda, \mu)$ is the multiplicity of $\chi^{a, b}_{\mu}$ in the decomposition of $\chi^{a, b}_{\lambda}|_{G(N)}$, and $G(N) = SO(2N+1), Sp(2N), O(2N)$ when $(a, b) = (\frac{1}{2}, \frac{1}{2}), (\frac{1}{2}, -\frac{1}{2}), (-\frac{1}{2}, -\frac{1}{2})$, respectively. For simplicity of notation, we omit $a, b$ from the notation of $\Lambda^{N+1}_N$.

From the definition above, it is clear that $\Lambda^{N+1}_N(\lambda, \mu)\geq 0$ and $\sum_{\mu}{\Lambda^{N+1}_N(\lambda, \mu)} = 1$. Therefore $(\Lambda^{N+1}_N(\lambda, \mu))_{\lambda, \mu}$ is a stochastic matrix of format $\GTp_{N+1}\times\GTp_N$ and provides a Markov kernel $\GTp_{N}\dashleftarrow\GTp_{N+1}$. By applying the same reasoning for all $N$, we obtain a chain of Markov kernels
\begin{equation}\label{chain}
\GTp_1 \dashleftarrow \GTp_2 \dashleftarrow \cdots\dashleftarrow \GTp_N \dashleftarrow \GTp_{N+1}\dashleftarrow\cdots
\end{equation}
We say that a sequence $\{M_N\}_{N=1, 2, \ldots}$ of probability measures on the levels $\{\GTp_N\}_{N = 1, 2, \ldots}$ is a coherent system if the following consistency condition holds:
\begin{equation*}
M_N(\mu) = \sum_{\lambda\in\GTp_{N+1}}{M_{N+1}(\lambda)\Lambda^{N+1}_N(\lambda, \mu)}, \ \forall N\geq 1, \ \mu\in\GTp_N.
\end{equation*}

If we let $\mathcal{M}_p(\GTp_N)$ be the metric space of probability measures on $\GTp_N$, then $(\ref{chain})$ induces the following chain of measurable maps
\begin{equation*}
\MM_p(\GTp_1) \leftarrow \MM_p(\GTp_2) \leftarrow \cdots\leftarrow \MM_p(\GTp_N) \leftarrow \MM_{p+1}(\GTp_{N+1})\leftarrow\cdots
\end{equation*}
The inverse limit $\lim_{\leftarrow}{\MM_p(\GTp_N)}$ is evidently a convex set, whose elements are coherent systems $\{M_N\}_{N = 1, 2, \ldots}$ of probability measures. We define the boundary of the BC branching graph $\Omega_{\infty}$ as the set of extreme points of the convex set $\lim_{\leftarrow}{\MM_p}(\GTp_N)$. It is a known result, see e.g. \cite[Thm. 9.2]{Ol2}, that there is a natural map
\begin{equation}\label{tobeiso}
\mathcal{M}_p(\Omega_{\infty}) \longrightarrow \lim_{\leftarrow}{\mathcal{M}_p(\GTp_N)}
\end{equation}
which is a bijection of sets.

The boundary $\Omega_{\infty}$ of the BC branching graph, as well as the maps $\Lambda^{\infty}_N: \MM_p(\Omega_{\infty})\longrightarrow\MM_p(\GTp_N)$ coming from $(\ref{tobeiso})$, can be described fairly explicitly. Consider the space $\R^{2\infty+1} = \R^{\infty}\times\R^{\infty}\times\R$ with the product topology, and its closed subspace $\Omega_{\infty}\subset\R^{2\infty+1}$ of elements $(\alpha, \beta, \delta)$ such that
\begin{eqnarray*}
\alpha &=& (\alpha_1 \geq \alpha_2 \geq \ldots \geq 0)\\
\beta &=& (1\geq \beta_1 \geq \beta_2 \geq \ldots \geq 0)\\
\delta &\geq& \sum_{i=1}^{\infty}{(\alpha_i + \beta_i)}.
\end{eqnarray*}

The points in $\Omega_{\infty}$ can be thought of as limits of partitions whose lengths go to infinity. In fact, consider a sequence of partitions $\{\lambda(M)\}_{M = 1, 2, \ldots}$ such that the length of each $\lambda(M)$ is $\leq M$ and assume that the limits below exist and are finite
\begin{eqnarray*}
\lim_{M\rightarrow\infty}{\frac{\lambda_i(M)}{M}} &=& \alpha_i, \hspace{.2in} i = 1, 2, \ldots\\
\lim_{M\rightarrow\infty}{\frac{\lambda_i'(M)}{M}} &=& \beta_i, \hspace{.2in} i = 1, 2, \ldots\\
\lim_{M\rightarrow\infty}{\frac{|\lambda(M)|}{M}} &=& \delta.
\end{eqnarray*}
We have denoted by $\lambda_i(M)$ the $i$-th part of the partition $\lambda(M)$, by $\lambda_i'(M)$ the $i$-th part of the conjugate partition $\lambda'(M)$ and by $|\lambda(M)|$ the size of the partition. Then clearly the resulting ``limit point'' $\omega = (\alpha, \beta, \delta)$ belongs to $\Omega_{\infty}$. It is a deep theorem of Okounkov and Olshanski, see \cite{OO1}, that the limit
\begin{equation*}
    \Lambda^{\infty}_N(\omega, \mu) = \lim_{M\rightarrow\infty}{(\Lambda^M_{M-1}\Lambda^{M-1}_{M-2}\cdots\Lambda^{N+1}_N)(\lambda(M), \mu)}
\end{equation*}
exists and depends only on $\omega$, and not on the sequence $\{\lambda(M)\}_M$. In the same paper, the authors prove that $\Omega_{\infty}$, as a set, is in bijection with the boundary of the BC branching graph.
The resulting Markov kernels $\Lambda^{\infty}_N : \Omega_{\infty}\dashrightarrow \GTp_N$ induce the coherent maps $\Lambda^{\infty}_N: \MM_p(\Omega_{\infty}) \rightarrow\MM_p(\GTp_N)$ from bijection $(\ref{tobeiso})$.

From the description above, the boundary $\Omega_{\infty}$ can be seen as a limit of the discrete spaces $\GTp_N$. On the other hand, $\GTp_N$ parametrizes the irreducible characters of the compact Lie groups $SO(2N+1), Sp(2N)$ and certain reducible characters of $O(2N)$. Thus in a sense, though not in any formal way, the boundary $\Omega_{\infty}$ is ``dual'' to the inductive limits $S(2\infty + 1) = \lim_{\rightarrow}{SO(2N+1)}$, $Sp(2\infty) = \lim_{\rightarrow}{Sp(2N)}$ and $O(2\infty) = \lim_{\rightarrow}{O(2N)}$, which are infinite-dimensional analogues of classical Lie groups,  hence the title of this paper.

\subsection{$z$-measures}

The $z$-measures are certain probability measures on the boundary $\Omega_{\infty}$ of the BC branching graph that are parametrized by pairs $z, z'$ of complex numbers satisfying certain constraints, e.g., if $z' = \overline{z}\notin\R$ and $\Re(z) > -(1+b)/2$, then the pair $(z, z')$ satisfies those constraints. For simplicity, assume $z' = \overline{z}\notin\R$ and $\Re(z) > -(1+b)/2$ for the rest of the introduction.
The $z$-measures arise naturally in the problem of harmonic analysis for big groups; see Appendix $\ref{sec:zmeasuresharmonic}$ for further details and motivation. See also  \cite{Ol1} for an in-depth discussion of the problem of harmonic analysis on $U(\infty)$, which corresponds to the type $A$ version of the story.

Due to the bijection $(\ref{tobeiso})$, giving a probability measure on $\Omega_{\infty}$ is equivalent to giving a sequence of coherent probability measures on the levels $\{\GTp_N\}_{N = 1, 2, \ldots}$. Under this equivalence, the $z$-measure associated to $(z, z', a, b)$ can be defined by the sequence of probability measures $\{M_{z, z', a, b|N}\}_{N \geq 1}$ on $\{\GTp_N\}_{N \geq 1}$ given by
\begin{eqnarray}\label{zdef1}
M_{z, z', a, b|N}(\lambda) &=& const_N \prod_{1\leq i < j \leq N}{(\hatl_i - \hatl_j)^2}\cdot\prod_{i=1}^N{W_{N}(l_i)},
\end{eqnarray}
where $l_i = \lambda_i + N - i$, $\hatl_i = \left(l_i + \frac{a+b+1}{2}\right)^2$, $const_N$ is a normalization constant and
\begin{eqnarray}\label{Wdef1}
W_{N}(x) &=& \left(x + \frac{a+b+1}{2}\right)\frac{\Gamma(x + a + b + 1)\Gamma(x + a + 1 )}{\Gamma(x + b + 1)\Gamma(x + 1)\Gamma(z-x+N)\Gamma(z'-x+N)}\nonumber\\
&&\times\frac{1}{\Gamma(z + x + N + a + b + 1)\Gamma(z' + x + N + a + b + 1)}.
\end{eqnarray}

We call $M_{z, z', a, b|N}$ the $z$-measure at level $N$ associated to $(z, z', a, b)$. The probability measures $M_{z, z', a, b|\infty}$ on $\Omega_{\infty}$ corresponding to the coherent system $\{M_{z, z', a, b|N}\}_N$ via the bijection $(\ref{tobeiso})$ are known as the spectral $z$-measures, or simply as $z$-measures.

\subsection{Markov dynamics on positive signatures}

Under our assumption on the parameters $z, z', a, b$, in particular $z' = \overline{z}\notin\R$, the rates
\begin{eqnarray*}
r_{x\rightarrow x+1} &=& \frac{(x + 2\epsilon)(x + a + 1)(x - z + N - 1)(x - z' + N - 1)}{(2x + a + b + 1)(2x + a + b+ 2)}, \hspace{.1in}x\geq 0\\
r_{x\rightarrow x-1} &=& \frac{x(x + b)(x + z + N + a + b)(x + z' + N + a + b)}{(2x + a + b + 1)(2x + a + b)}, \hspace{.1in}x\geq 1
\end{eqnarray*}
are strictly positive real numbers and define a continuous-time birth-and-death process on $\GTp_1 = \Zp$ which preserves the $z$-measures at level $1$, i.e., the probability measures given in $(\ref{zdef1})$ for $N = 1$. One can prove this fact directly, but also by showing that the orthogonal polynomials of a discrete variable, with respect to the weight $(\ref{Wdef1})$, have $r_{x\rightarrow x+1}, r_{x\rightarrow x-1}$ as coefficients in the second-order difference equation that defines them, see Section $\ref{zmeasuressubsection}$ for details. The orthogonal polynomials that we need are known as the Wilson-Neretin polynomials.

To construct Markov dynamics on $\GTp_N$ that preserve the $z$-measures at level $N$, consider a Doob $h$-transformation of $N$ independent birth-and-death processes with rates $r_{x\rightarrow x\pm 1}$. Explicitly, we consider the matrix of transition rates on $\GTp_N$ whose entries are
\begin{eqnarray*}
r^{(N)}_{\lambda\rightarrow\nu} &=& \frac{\prod_{i < j}{((n_i + \epsilon)^2 - (n_j + \epsilon)^2)}}{\prod_{i < j}{((l_i + \epsilon)^2 - (l_j + \epsilon)^2)}}\\
&&\times\left(r_{l_1\rightarrow n_1}\mathbf{1}_{\{l_i = n_i, i\neq 1\}} +\ldots + r_{l_N\rightarrow n_N}\mathbf{1}_{\{l_i = n_i, i\neq N\}}\right) - c_N\mathbf{1}_{\{\lambda = \nu\}},\nonumber
\end{eqnarray*}
where $l_i = \lambda_i + N - i$, $n_i = \nu_i + N - i$, $\epsilon = (a+b+1)/2$, and $c_N$ is certain normalizing constant (chosen to make the operator corresponding to the matrix $r^{(N)}$ vanish at the constant functions).

General techniques on continuous-time homogeneous Markov chains allow us to show that the matrices of transition rates $r^{(N)}$ uniquely define Markov semigroups $(P_N(t))_{t\geq 0}$ on $\GTp_N$ and moreover that they are Feller semigroups. The important point for us is that the Feller semigroups $(P_N(t))_{t\geq 0}$ preserve the $z$-measures at level $N$, defined above. The proof of this fact uses again the Wilson-Neretin polynomials, especially the second order difference equation these orthogonal polynomials satisfy.

Given that $(P_N(t))_{t\geq 0}$ are Feller semigroups on the levels of the BC branching graph, one could imagine they can be ``pasted together'' to obtain a Feller semigroup on the boundary $\Omega_{\infty}$. Such thought indeed comes to fruition; the main technical statement that we need to prove is the following relation for all $N$ large enough:
\begin{equation}\label{intertwining}
P_{N+1}(t)\Lambda^{N+1}_N = \Lambda^{N+1}_{N}P_N(t), \hspace{.2in} t\geq 0.
\end{equation}
It is not evident apriori that $(\ref{intertwining})$ holds. The proof is computational and based on identities for (shifted) symmetric polynomials. Once the identity above is known, the general method of intertwiners, see Theorem $\ref{methodintertwiners}$ below, can be applied to conclude the existence of a Feller semigroup $(P_{\infty}(t))_{t\geq 0}$ in $\Omega_{\infty}$ that has the $z$-measures as unique invariant probability measures.
We are ready to state the main result of this paper. Notice that this introduction assumed $(a, b)$ was one of three special pairs, but the following theorem holds for general real parameters $a\geq b\geq -1/2$.

\subsection{The main theorem}

For pairs $(z, z')\in\C^2$ as in Definition $\ref{Hdef}$, there exist Feller semigroups $(P_{\infty}(t))_{t\geq 0}$ on the infinite-dimensional space $\Omega_{\infty}$ that have the $z$-measures as their unique invariant measures. By general theory on Feller semigroups, it follows that, given any probability measure $\nu$ on $\Omega_{\infty}$, there is a Markov process on $\Omega_{\infty}$ with cadlag sample paths, initial distribution $\nu$ and having $(P_{\infty}(t))_{t\geq 0}$ as its transition function.

\subsection{Conventions}\label{conventions}

\begin{enumerate}
	\item In this paper, we often use two real parameters $a, b$ and from them we define
\begin{equation*}
\epsilon = \frac{a+b+1}{2}.
\end{equation*}
Our main result requires
\begin{equation*}
a \geq b \geq -1/2,
\end{equation*}
which implies $\epsilon \geq 0$. However, in parts of the paper, we could assume the less restrictive $a, b > -1$, which is the only requirement necessary for the existence of the Jacobi polynomials $P_{\lambda}(\cdot | a, b)$. The assumption $a \geq b \geq -1/2$ is required in order to make use of the results from \cite{OO1, OlOs}. We keep the stronger assumption throughout to make estimates a little easier.

	\item We denote $\Zp$ and $\N$ the set of nonnegative and positive integers, respectively. We also write
\begin{equation*}
\Zp^{\epsilon} \myeq \{(n + \epsilon)^2 : n\in\Zp\}
\end{equation*}
for the quadratic half-lattice. For any $x\in\Zp$, we often use the notation
\begin{equation*}
\hatx \myeq (x+\epsilon)^2
\end{equation*}
for the corresponding element of $\Zp^{\epsilon}$.

	\item In Section $\ref{positiveGT}$, we define sets $\GTp_N, \Omega_N, \Omega_N^{\epsilon}$, that depend on a natural number $N$, e.g. for $N = 1$, $\GTp_1 = \Omega_1 = \Zp, \Omega_1^{\epsilon} = \Zp^{\epsilon}$. There are natural bijections among these sets for general $N$:
\begin{eqnarray*}
\GTp_N \longleftrightarrow &\Omega_N& \longleftrightarrow \Omega_N^{\epsilon}\\
\lambda \longleftrightarrow &l& \longleftrightarrow \widehat{l}. \nonumber
\end{eqnarray*}
Once the bijections are defined in $(\ref{bijection})$, they will be used throughout the rest of the paper without further comment, i.e., if $\lambda$ is mentioned, we will use $l, \widehat{l}$ to denote the elements of $\Omega_N, \Omega_N^{\epsilon}$ associated to $\lambda\in\GTp_N$. This convention will be ubiquitous in the paper.

	\item We often will make estimates to prove a convergence as one variable tends to infinity. In those estimates, we write $const$ for a positive constant independent of the variable that goes to infinity. The specific value of $const$ may be a different one each time it appears.
\end{enumerate}

\subsection{Organization of the paper}
The present introduction states the main result of the paper in the special cases $(a, b) = (\frac{1}{2}, \frac{1}{2}), (\frac{1}{2}, -\frac{1}{2}), (-\frac{1}{2}, -\frac{1}{2})$.
In Section $\ref{sec:generalitiesfeller}$, we describe the general setting of Feller semigroups on graded graphs and their boundaries.
Of importance, Section $\ref{sec:generalitiesfeller}$ describes the method of intertwiners, which shows how to construct a Feller semigroup on the boundary of the graph from coherent Feller semigroups on the levels of the graph, and also gives conditions for proving that the Feller semigroup on the boundary has a unique invariant measure.
Afterwards, we seek to specialize the general setting for the $BC$ branching graph; such graded graph and its boundary are discussed in Section $\ref{positiveGT}$.
In Section $\ref{semigroupssection}$, we construct Feller semigroups on the levels of the BC branching graph.
Later, in Section $\ref{commuting}$, we show that these semigroups are coherent, effectively proving the existence of Feller semigroups on the boundary of the BC branching graph.
To finish the proof of the main result, in Section $\ref{zmeasuressubsection}$ we prove that the $z$-measures are the unique invariant measures of the Feller semigroups.

We include three appendices. In Appendix $\ref{sec:zmeasuresharmonic}$, we describe the connection between the BC type $z$-measures and the representation theory of infinite symmetric spaces.
In Appendix $\ref{appendixA}$, we prove a coherence relation for shifted symmetric polynomials that was used in Section $\ref{commuting}$ for the proof of coherence, and in Appendix $\ref{appendixB}$, we finish a technical point in a proof from Section $\ref{semigroupssection}$.

\subsection{Acknowledgments}
I'm very grateful to my advisor Alexei Borodin and to Grigori Olshanski, for introducing me to the subject of asymptotic representation theory, for suggesting the problem and for helpful discussions. Thanks also to G. Olshanski for providing an outline for the proof of Proposition $\ref{pro2}$.

\section{Generalities on Feller Semigroups and Graded Graphs}\label{sec:generalitiesfeller}

\subsection{Markov kernels and Markov semigroups}\label{markovkernels}

Let $X, Y$ be two measurable spaces. A \textit{(Markov) kernel} $K: X\dashrightarrow Y$ is a $[0, 1]$-valued map that takes both an element $x\in X$ and a measurable set $A\subset Y$ as arguments and satisfies (a) $K(x, \cdot)$ is a probability measure on $Y$ and (b) $K(\cdot, A)$ is a measurable function on $X$. If $K: X \dashrightarrow Y$ and $L: Y\dasharrow Z$ are kernels, we can compose them to obtain a kernel $KL: X\dasharrow Z$:
\begin{equation*}
KL(x, dz) = \int_Y{K(x, dy)L(y, dz)}.
\end{equation*}

If $X$ and $Y$ are countable discrete spaces, then a Markov kernel $K: X\dasharrow Y$ can be identified with the matrix $M = [M(x, y)]_{{\substack x\in X, \\ y\in Y}}$, $M(x, y) = K(x, \{y\})$. We then say that $M$ is a \textit{stochastic matrix}; it has the property that all its entries are nonnegative and all row sums equal $1$.

We denote by $\MM(X)$ the set of finite signed measures on $X$, and by $\MM_p(X)\subset \MM(X)$ the subset of probability measures on $X$. Then $\MM(X)$ can be made into a Banach space where the total variation of a measure defines its norm. In particular $\MM(X)$, and also $\MM_p(X)$, is a measurable space if equipped with its Borel $\sigma$-algebra. If $X, Y$ are measurable spaces, a kernel $K: X \dashrightarrow Y$ induces a contraction $\MM(X) \rightarrow \MM(Y)$ that we denote $\mu \mapsto \mu K$ and is given by \begin{equation*}
(\mu K)(dy) = \int_X{\mu(dx)K(x, dy)}.
\end{equation*}

Evidently, $\MM(X)\rightarrow\MM(Y)$ maps $\MM_p(X)$ into $\MM_p(Y)$.

If $X$ is a topological space, we denote by $C(X)$ the Banach space of real-valued bounded continuous functions on $X$. A kernel $K: X \dashrightarrow Y$ induces a contraction $C(Y) \rightarrow C(X)$ that we denote $f \mapsto Kf$ and is given by \begin{equation*}
(Kf)(x) = \int_Y{f(y)K(x, dy)}.
\end{equation*}

If $X$ is a locally compact space, we let $C_0(X)\subset C(X)$ be the subspace of continuous functions which vanish at infinity. It is not always the case that $C(Y) \rightarrow C(X)$ maps $C_0(Y)$ into $C_0(X)$; it is only a Feller kernel that has such property and we occupy the next section on the basics of their theory.

Lastly, we define Markov semigroups. Let $X$ be a measurable space; a {\it Markov semigroup on $X$} is a $\R_+$-indexed collection of Markov kernels $P(t) : X \dashrightarrow X$ satisfying the initial condition $P(0; x, F) = \mathbf{1}_{\{x\in F\}}$, for all $x\in X$ and all $F\subset X$ measurable, and the {\it Chapman-Kolmogorov equation} \begin{equation*}
P(t+s) = P(t)P(s), \hspace{.2in}t, s\geq 0.
\end{equation*}

If $X$ is a countable space, the kernels $P(t)$ are essentially (infinite) matrices whose rows and columns are parametrized by $X$. In this case, we will always require that the semigroup $(P(t))_{t\geq 0}$ is \textit{standard}, meaning we have the entry-wise limit
\begin{equation*}
\lim_{t\rightarrow 0^+}{P(t)} = I.
\end{equation*}

Markov semigroups arise naturally as the transition functions of stochastic processes $\{X(t)\}_{t\geq 0}$ with values on $X$.

From the previous discussion, the kernels $P(t)$ define natural maps $P(t): \MM(X)\rightarrow\MM(X)$, which form a semigroup in $\MM(X)$. If $X$ is a topological space, then one similarly obtains a semigroup in $C(X)$.

\subsection{Feller kernels and Feller semigroups}\label{fellersemigroups}

In this section, assume that all spaces are locally compact, second countable and equipped with their Borel $\sigma$-algebra. A Markov kernel $K: X\dashrightarrow Y$ is a {\it Feller kernel} if the induced map $C(Y) \rightarrow C(X)$ sends $C_0(Y)$ into $C_0(X)$.

A Markov semigroup $(P(t))_{t\geq 0}$ on $X$ is a {\it Feller semigroup} if each $P(t): X \dashrightarrow X$ is a Feller kernel and if the semigroup is strongly continuous, i.e., $t\rightarrow P(t)f$ is a continuous map $[0, \infty) \rightarrow C_0(X)$, for each $f\in C_0(X)$.

If $X$ is countable, the latter condition is automatic provided the entries of $P(t)$ are continuous functions in $t$ (the latter follows, in fact, from the condition that the semigroup $(P(t))_{t\geq 0}$ is standard). Moreover if $X$ is countable, the Feller condition can be restated as the following limit relation
\begin{equation}\label{fellercountable}
\lim_{i\rightarrow\infty}{P(t; i, j)} = 0, \hspace{.1in}\textrm{ for any }j\in E.
\end{equation}
The following important theorem can be found in \cite[IV.2.7]{EK}.

\begin{thm}\label{fellertheorem}
Let $(P(t))_{t\geq 0}$ be a Feller semigroup on $X$ and $\nu$ be an arbitrary probability measure on $X$. There is a cadlag stochastic process $\{X(t)\}_{t\geq 0}$ with transition function $(P(t))_{t\geq 0}$ and initial distribution $\nu$.
\end{thm}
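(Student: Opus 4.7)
The plan is two-stage: first produce a process with the correct finite-dimensional distributions via Kolmogorov's extension theorem, then regularize its sample paths using the Feller property.

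For the finite-dimensional construction, for each $0\le t_1<\cdots<t_n$ I define
\[
\mu_{t_1,\ldots,t_n}(A_1\times\cdots\times A_n)=\int_X\nu(dx_0)\int_{A_1}P(t_1;x_0,dx_1)\cdots\int_{A_n}P(t_n-t_{n-1};x_{n-1},dx_n).
\]
The Chapman--Kolmogorov equation $P(s+t)=P(s)P(t)$ gives consistency of this family under marginalization, so Kolmogorov's extension theorem on $X^{[0,\infty)}$ yields a probability space $(\Omega,\mathcal{F},\PP)$ carrying a process $\{X(t)\}_{t\ge 0}$ with these marginals and $X(0)\sim\nu$. At this stage the sample paths are arbitrary functions $[0,\infty)\to X$.

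For regularization I exploit the resolvent $R_\lambda f:=\int_0^\infty e^{-\lambda t}P(t)f\,dt$, $\lambda>0$. The Feller property forces $R_\lambda\colon C_0(X)\to C_0(X)$, and strong continuity gives $\lambda R_\lambda f\to f$ uniformly as $\lambda\to\infty$, so one can pick a countable family $\{R_{\lambda_k}f_k\}\subset C_0(X)$ separating points of the one-point compactification $X^\partial=X\cup\{\partial\}$ (with each $R_{\lambda_k}f_k$ extended by $0$ at $\partial$). A short calculation from Chapman--Kolmogorov shows that for $f\ge 0$ the process $M_\lambda^f(t):=e^{-\lambda t}(R_\lambda f)(X(t))$ is a nonnegative $\PP$-supermartingale. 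Fix a countable dense $D\subset[0,\infty)$; Doob's upcrossing inequality applied to each $M_{\lambda_k}^{f_k}$ along $D$ shows that almost surely every such process, restricted to $D$, admits finite left and right limits at every $t\ge 0$. Since the family separates $X^\partial$, this transfers to the underlying path: $\widetilde X(t):=\lim_{D\ni s\downarrow t}X(s)$ exists in $X^\partial$ for every $t\ge 0$, giving a cadlag process on $X^\partial$.

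Finally I would show $\widetilde X(t)=X(t)$ almost surely for each fixed $t\ge 0$, which both installs the prescribed transition function on $\widetilde X$ and ensures $\widetilde X(t)\in X$. Strong continuity combined with the Markov property implies $f(X(s))\to f(X(t))$ in probability as $s\to t$ for every $f\in C_0(X)$; then the almost sure right-limit $f(\widetilde X(t))$ must agree with the in-probability limit $f(X(t))$ for every $f$ in the countable separating family, so $\widetilde X(t)=X(t)$ a.s. The main obstacle is the regularization step: one must arrange for a countable family of supermartingales to pin down the path and simultaneously exclude escape to the cemetery. The Feller property is essential on both counts---through $R_\lambda(C_0(X))\subset C_0(X)$ it supplies the countable separating supermartingale family inside $C_0$, and together with strong continuity it yields the stochastic continuity that rules out $\partial$ at each fixed time.
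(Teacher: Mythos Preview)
The paper does not actually prove this theorem: it is stated as a background result and attributed to \cite[IV.2.7]{EK} (Ethier--Kurtz) without any argument. So there is nothing in the paper to compare your proof against.

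That said, your sketch is essentially the classical proof one finds in Ethier--Kurtz or Revuz--Yor: build a raw process by Kolmogorov extension (legitimate here since $X$ is locally compact second countable, hence Polish), manufacture a countable separating family of bounded nonnegative supermartingales from the resolvent $R_\lambda$ acting on $C_0(X)$, apply Doob's upcrossing inequality along a countable dense time set to get almost-sure existence of one-sided limits in the one-point compactification, and then use stochastic continuity (from strong continuity of the semigroup) to identify the right-regularized process with the original at each fixed time. The supermartingale computation you indicate is correct: for $f\ge 0$, $P(s)R_\lambda f=e^{\lambda s}\int_s^\infty e^{-\lambda u}P(u)f\,du\le e^{\lambda s}R_\lambda f$, which gives $\EE[M_\lambda^f(t+s)\mid\mathcal F_t]\le M_\lambda^f(t)$. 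Your outline is sound; the only places that would need expansion in a full write-up are the point-separation argument on $X^\partial$ and the exclusion of $\partial$ at all times (not just fixed ones), both of which are handled in the cited reference.
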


\begin{rem}\label{processesfeller}
Due to Theorem $\ref{fellertheorem}$, we will be only interested in finding Feller semigroups on certain space, since they will provide us with Markov processes on that space for any given initial probabilty distribution.
\end{rem}

\subsection{Graded graphs and their boundaries}\label{gradedgraphsgeneralities}

A {\it graded graph} is a graph with countably many vertices, which are partitioned into levels $1, 2, 3, \ldots$ and such that edges have multiplicities and can join only vertices at adjacent levels. We allow edge-multiplicities that are not integers, though all multiplicities must be positive reals. Moreover we impose the condition that any vertex at level $N$ has at least one edge connecting it to some vertex at level $N+1$, for any $N\geq 1$, and at least one edge, but not infinitely many, connecting it to some vertex at level $N-1$, for any $N\geq 2$.

Let $V$ be the set of vertices of a graded graph and $V \myeq \bigsqcup_{N\geq 1}{V_N}$ be its decomposition into levels.
From the structure of the graded graph we are led to define certain Markov kernels $K_N^{N+1}: V_{N+1}\dashrightarrow V_N$, $N\geq 1$, or equivalently, stochastic matrices of format $V_{N+1}\times V_N$, $N\geq 1$.
Let $m(v_{N+1}, v_N)$ be the multiplicity of the edge between $v_{N+1}\in V_{N+1}$ and $v_N\in V_N$, and set $m(v_{N+1}, v_N) := 0$ if $v_{N+1}$ and $v_N$ are not connected by an edge; then $K^{N+1}_N$ is given by
\begin{equation}\label{finitekernelgeneral}
K^{N+1}_N(v_{N+1}, v_N) = \frac{m(v_{N+1}, v_N)}{\sum_{v\in V_N}{m(v_{N+1}, v)}}, \hspace{.2in}v_{N+1}\in V_{N+1}, \ v_N\in V_N.
\end{equation}
The Markov kernels $V_N \dashleftarrow V_{N+1}$ induce a chain of maps
\begin{equation*}
\MM_p(V_1) \leftarrow \MM_p(V_2) \leftarrow \cdots\leftarrow \MM_p(V_N) \leftarrow \MM_{p}(V_{N+1})\leftarrow\cdots.
\end{equation*}
The inverse limit $\lim_{\leftarrow}{\MM_p(V_N)}$ is evidently a convex set, whose elements are sequences of probability measures $\{M_N\}_{N = 1, 2, \ldots}$ on the sets $\{\GTp_N\}_{N = 1, 2, \ldots}$ satisfying the natural coherence conditions $M_N = M_{N+1}K^{N+1}_N$, $N\geq 1$, or equivalently,
\begin{equation*}
M_N(v_N) = \sum_{v_{N+1}\in V_{N+1}}{M_{N+1}(v_{N+1})K^{N+1}_N(v_{N+1}, v_N)}, \textrm{ for each }v_N\in V_N.
\end{equation*}

\begin{df}\label{boundarydef}
In the setup above, the set of extreme points $V_{\infty}$ of the convex set $\lim_{\leftarrow}{\MM_p(V_N)}$ is called the {\it boundary of the graded graph}. The boundary has a natural topology, which is the inherited one from $\lim_{\leftarrow}{\MM_p(V_N)}$ with its projective limit topology.

Since the boundary $V_{\infty}$ has a topology, and therefore a Borel $\sigma$-algebra, we can define the Banach space $\MM_p(V_{\infty})$. A general result, see \cite[Thm. 9.2]{Ol2}, shows that if $V_{\infty} \neq \emptyset$, there is a natural map
\begin{equation}\label{boundarydefeqn}
\mathcal{M}_p(V_{\infty}) \longrightarrow \lim_{\leftarrow}{\mathcal{M}_p(V_N)},
\end{equation}
which is a bijection of sets. In many cases, the bijection $(\ref{boundarydefeqn})$ is an isomorphism of measurable spaces when both sides of $(\ref{boundarydefeqn})$ are equipped with their natural $\sigma$-algebras. If this is the case, we say the boundary $V_{\infty}$ is \textit{ordinary}.
\end{df}

The bijection $(\ref{boundarydefeqn})$ arises as follows. For each $N$, consider the natural map
\begin{equation*}
\phi_N: V_{\infty} \subset \lim_{\leftarrow}{\mathcal{M}_p(V_N)} \rightarrow \mathcal{M}_p(V_N),
\end{equation*}
which is the composition of an inclusion and projection. Then there are natural Markov kernels $K^{\infty}_N: V_{\infty} \dashrightarrow V_N$, given by $K_N^{\infty}(x, A) = \phi_N(x)(A)$, for all $x\in V_{\infty}$ and (Borel) subset $A\subset V_N$. The sequence of kernels $\{K^{\infty}_N\}_{N \geq 1}$ is coherent in the sense that
\begin{equation}\label{cohinftyN}
K^{\infty}_{N+1}K^{N+1}_N = K^{\infty}_N, \textrm{ for all } N\geq 1.
\end{equation}
The induced map $\MM_p(V_{\infty}) \longrightarrow \lim_{\leftarrow}{\MM_p(V_N)}$ coming from the coherent maps $K^{\infty}_N: \MM_p(V_{\infty}) \longrightarrow \MM_p(V_N)$ is the bijective map in $(\ref{boundarydefeqn})$. Next we make the following definition of a Feller boundary.

\begin{df}\label{boundaryfeller}
Let $V_{\infty}$ be the boundary of a graded graph and assume that $V_{\infty}\neq\emptyset$. We say that $V_{\infty}$ is a \textit{Feller boundary} if
\begin{itemize}
	\item $V_{\infty}$ is locally compact, equipped with its Borel $\sigma$-algebra.
	\item All the Markov kernels $K^{N+1}_N$ and $K^{\infty}_N$ are Feller.
	\item The bijection $(\ref{boundarydefeqn})$ is an isomorphism of measurable spaces.
\end{itemize}
\end{df}

\subsection{Markov processes on the boundary: method of intertwiners}\label{methodinter}
For an expanded explanation on the method of intertwiners, see \cite{Ol1}. Here we will be content with stating the following theorem, which is proved in \cite[Prop. 2.4 and Sec. 2.8]{BO2}.

\begin{thm}\label{methodintertwiners}
\normalfont Let $(P_N(t))_{t\geq 0}$ be Feller semigroups on the levels $V_N$ of the graded graph in the setting above. Assume moreover that the following \textit{master relation}
\begin{equation}\label{commutativity}
P_{N+1}(t)\Lambda^{N+1}_N = \Lambda^{N+1}_NP_N(t)
\end{equation}
holds for all $t\geq 0$, $N\geq N_0$, for some $N_0\in\N$. Additionally assume that the boundary $V_{\infty}$ is nonempty and is a Feller boundary. Then there exists a unique Feller semigroup $(P_{\infty}(t))_{t\geq 0}$ on $V_{\infty}$ satisfying
\begin{equation}\label{commutativity2}
P_{\infty}(t)\Lambda^{\infty}_N = \Lambda^{\infty}_NP_N(t) \ \forall \ t\geq 0, \ N\geq N_0.
\end{equation}
Moreover, there exists a unique probability measure $\nu_{\infty}$ on $V_{\infty}$ invariant with respect to $(P_{\infty}(t))_{t\geq 0}$ if and only if there exist unique probability measures $\nu_N$ on $V_N$, for all $N\geq N_0$, that are invariant with respect to $(P_N(t))_{t\geq 0}$ and such that they are coherent, i.e., $\nu_N = \nu_{N+1}\Lambda^{N+1}_N$ for all $N\geq N_0$.

For all $1\leq K < N_0$, let $\nu_K \myeq \nu_{N_0}\Lambda^{N_0}_{N_0 - 1}\cdots\Lambda^{K+1}_K$. If the condition above is satisfied, $\nu_{\infty}$ corresponds to the coherent sequence $\{\nu_N\}_{N=1, 2, \ldots}$, under the bijection in $(\ref{boundarydefeqn})$.
\end{thm}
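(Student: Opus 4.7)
The plan is to build $P_{\infty}(t)$ as a Markov kernel $V_{\infty} \dashrightarrow V_{\infty}$ directly from its projections via the bijection $(\ref{boundarydefeqn})$. For each $\omega \in V_{\infty}$ and $t \geq 0$, consider the candidate projections
\begin{equation*}
\mu^{\omega,t}_N := \Lambda^{\infty}_N(\omega, \cdot)\, P_N(t) \in \MM_p(V_N), \quad N \geq N_0.
\end{equation*}
Combining the coherence $(\ref{cohinftyN})$ with the master relation $(\ref{commutativity})$ yields
\begin{equation*}
\mu^{\omega,t}_{N+1} \Lambda^{N+1}_N = \Lambda^{\infty}_{N+1}(\omega,\cdot) P_{N+1}(t) \Lambda^{N+1}_N = \Lambda^{\infty}_{N+1}(\omega,\cdot) \Lambda^{N+1}_N P_N(t) = \mu^{\omega,t}_N,
\end{equation*}
so the family $\{\mu^{\omega,t}_N\}_{N \geq N_0}$ is coherent. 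Extending it to all $N \geq 1$ by the pushforward formula stated in the theorem, the bijection $(\ref{boundarydefeqn})$ produces a unique probability measure on $V_{\infty}$, which I declare to be $P_{\infty}(t)(\omega, \cdot)$. Measurable dependence on $\omega$ is a consequence of the Feller-boundary assumption that $(\ref{boundarydefeqn})$ is an isomorphism of measurable spaces.

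I would then verify in order: (a) the semigroup property and the initial condition $P_{\infty}(0;\omega,\cdot) = \delta_{\omega}$, which follow because both sides have identical projections to every $V_N$, namely $\Lambda^{\infty}_N(\omega,\cdot) P_N(t+s)$ and $\Lambda^{\infty}_N(\omega,\cdot)$ respectively, and the bijection is injective; (b) the intertwining $(\ref{commutativity2})$, which holds by construction; (c) the Feller property and strong continuity, which reduce on the subspace $\mathcal{A} := \bigcup_{N \geq N_0} \Lambda^{\infty}_N(C_0(V_N)) \subset C_0(V_{\infty})$ to the corresponding properties of the $P_N(t)$ via the identity $P_{\infty}(t)\Lambda^{\infty}_N f = \Lambda^{\infty}_N P_N(t) f$, and then propagate to all of $C_0(V_{\infty})$ by density of $\mathcal{A}$ together with contractivity of $P_{\infty}(t)$ in sup-norm. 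Uniqueness of $(P_{\infty}(t))_{t\geq 0}$ subject to $(\ref{commutativity2})$ is automatic: any such Feller semigroup must have the prescribed projections on every $V_N$, hence coincides with the construction by the bijection.

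For the invariant-measure statement, the forward direction is clean: given $P_{\infty}$-invariant $\nu_{\infty}$, set $\nu_N := \nu_{\infty} \Lambda^{\infty}_N$. Then
\begin{equation*}
\nu_N P_N(t) = \nu_{\infty} \Lambda^{\infty}_N P_N(t) = \nu_{\infty} P_{\infty}(t) \Lambda^{\infty}_N = \nu_{\infty} \Lambda^{\infty}_N = \nu_N,
\end{equation*}
while $\nu_N = \nu_{N+1}\Lambda^{N+1}_N$ is immediate from $(\ref{cohinftyN})$. Conversely, a coherent family of invariant probability measures $\{\nu_N\}_{N \geq N_0}$ (extended to $K < N_0$ by the formula in the statement) assembles via $(\ref{boundarydefeqn})$ into a unique $\nu_{\infty} \in \MM_p(V_{\infty})$; its $P_{\infty}$-invariance is verified by testing against $\mathcal{A}$, where it reduces to $\int P_N(t) f\, d\nu_N = \int f\, d\nu_N$. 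Uniqueness transfers across the bijection in both directions.

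The main technical obstacle is the density of $\mathcal{A}$ in $C_0(V_{\infty})$, used both in extending the Feller and strong-continuity properties from $\mathcal{A}$ to $C_0(V_{\infty})$ and in reducing $P_{\infty}$-invariance to the finite-level statements. The natural approach is Stone--Weierstrass applied to the subalgebra of $C_0(V_{\infty})$ generated by $\mathcal{A}$: this subalgebra vanishes nowhere, and it separates points of $V_{\infty}$ because distinct extreme points of $\lim_{\leftarrow}\MM_p(V_N)$ correspond to distinct coherent systems, so they differ in the projection $\Lambda^{\infty}_N(\omega,\cdot)$ to some $V_N$ and hence are separated by some $\Lambda^{\infty}_N f$ with $f \in C_0(V_N)$. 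This is the step where the Feller-boundary hypotheses enter most essentially.
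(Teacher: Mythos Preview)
The paper does not give a proof of this theorem; it cites \cite[Prop.~2.4 and Sec.~2.8]{BO2} for the case $N_0=1$ and then observes (in the remark following the statement) that the general case reduces to this because the projective limits $\lim_{\leftarrow,\,N\geq 1}\MM_p(V_N)$ and $\lim_{\leftarrow,\,N\geq N_0}\MM_p(V_N)$ are canonically isomorphic, so the boundary of the truncated graph coincides with $V_\infty$. Your sketch is essentially a reconstruction of what happens in \cite{BO2}, and the overall architecture --- build $P_\infty(t)(\omega,\cdot)$ from its projections, check the semigroup law on projections, push the Feller and strong-continuity properties through a dense subspace --- is correct.

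There is one genuine soft spot. You correctly identify density of $\mathcal{A}=\bigcup_{N\geq N_0}\Lambda^\infty_N(C_0(V_N))$ in $C_0(V_\infty)$ as the crux, but the Stone--Weierstrass argument you propose only yields density of the \emph{subalgebra generated by} $\mathcal{A}$, whereas your extension argument needs density of the linear space $\mathcal{A}$ itself (you only know $P_\infty(t)$ preserves $\mathcal{A}$, not products of its elements). The clean fix is a duality argument: if $\mathcal{A}$ were not dense, Hahn--Banach and Riesz representation would produce a nonzero finite signed measure $\mu$ on $V_\infty$ with $\mu\Lambda^\infty_N=0$ for all $N$. Writing $\mu=\mu^+-\mu^-$ and noting that $\Lambda^\infty_N$ preserves total mass, one finds that (after normalization) $\mu^+$ and $\mu^-$ are probability measures with identical projections to every $V_N$, hence equal by injectivity of the bijection $(\ref{boundarydefeqn})$, forcing $\mu=0$. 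With this in place your outline goes through.
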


\begin{rem}
In the source cited above, the method of intertwiners is stated only for $N_0 = 1$. However that version implies the one given above, because the inverse limits $\lim_{\leftarrow N\geq 1}{\mathcal{M}_p(V_N)}$ and $\lim_{\leftarrow N\geq N_0}{\mathcal{M}_p(V_N)}$ are canonically isomorphic, and therefore the boundary $\Omega_{\infty}$ of the original graph also serves as the boundary of the truncated graph with levels $\geq N_0$.
\end{rem}

\section{BC Branching Graph and its Boundary}\label{positiveGT}

In this section, we specialize the general setting of Section $\ref{gradedgraphsgeneralities}$ for the BC branching graph, describe its boundary $\Omega_{\infty}$ and prove that $\Omega_{\infty}$ is a Feller boundary.

\subsection{BC branching graph}\label{posGT}

For any integer $N\geq 1$, let $\GTp_N \myeq \{\lambda = (\lambda_1 \geq \lambda_2 \geq \ldots \geq \lambda_N) : \lambda_i \in \Z, \ \lambda_i \geq 0 \ \forall i = 1, \dots, N\}$ be the set of $N$-tuples of weakly decreasing nonnegative integers. Also set $\GT^+ \myeq \bigsqcup_{N\geq 1}{\GTp_N}$. Elements of $\GT^+$ are called {\it positive signatures}. Note that we do not identify positive signatures which differ by trailing zeroes; for example $(4, 2, 1, 0)$ and $(4, 2, 1, 0, 0, 0)$ are distinct elements of $\GTp$, the first one belonging to $\GTp_4$ and the second one belonging to $\GTp_6$. Elements of $\GTp_N$ are called {\it $N$-positive signatures}.

We shall construct the {\it branching graph of classical Lie groups of type $B, C, D$} (or simply {\it BC branching graph}) as a graded graph with vertex set $\GT^+$. The graph will satisfy the property that only vertices in adjacent levels, i.e., levels $\GT_m^+$ and $\GT_{m+1}^+$ for some $m\in\Zp$, can be joined by an edge of certain multiplicity. The edges are determined as follows: an edge connects $\lambda\in\GT_{N+1}^+$ and $\mu\in\GTp_N$ (with certain positive multiplicity $m(\lambda, \mu) > 0$) if and only if there exists $\nu\in\GTp_N$ such that
\begin{equation*}
\lambda\succ\nu \hspace{.1in}\textrm{ and }\hspace{.1in} \nu\cup 0\succ\mu,
\end{equation*}
where $\lambda = (\lambda_1, \ldots, \lambda_N, \lambda_{N+1}) \succ \nu = (\nu_1, \ldots, \nu_N)$ means the interlacing
\begin{equation*}
\lambda_1 \geq \nu_1 \geq \lambda_2 \geq \ldots \geq \lambda_N \geq \nu_N \geq \lambda_{N+1}.
\end{equation*}
Similarly, the relation $\nu\cup 0 = (\nu_1, \ldots, \nu_N, 0) \succ \mu = (\mu_1, \ldots, \mu_N)$ means
\begin{equation*}
\nu_1 \geq \mu_1 \geq \nu_2 \geq \ldots \geq \nu_N \geq \mu_N \geq 0.
\end{equation*}
In combinatorial language, the condition above is equivalent to the existence of $\nu\in\GTp_N$ such that the containment of Young diagrams $\mu\subset\nu\subset\lambda$ holds and the skew-shapes $\lambda/\nu$, $\nu/\mu$ are horizontal strips.

If $\lambda\in\GTp_{N+1}$ and $\mu\in\GTp_N$ are joined by an edge in the BC branching graph, we write
\begin{equation*}
\lambda\succ_{BC}\mu\hspace{.2in}\textrm{ or }\hspace{.2in}\mu\prec_{BC}\lambda.
\end{equation*}

The multiplicities $m(\lambda, \mu)$, $\lambda\in\GTp_{N+1}$, $\mu\in\GTp_N$, are given explicitly in terms of a family of classical multivariate orthogonal polynomials, namely $m(\lambda, \mu) = C_N^{a, b}(\lambda, \mu)\cdot\mathfrak{P}_{\mu}(1^N | a, b)$, where the \textit{Jacobi polynomials} $\mathfrak{P}_{\mu}(\cdot | a, b)$ and the \textit{branching coefficient} $C_N^{a, b}$ are defined below in $(\ref{multjacobi})$ and $(\ref{secondhelp})$, respectively. We shall not make explicit use of the multiplicities $m(\lambda, \mu)$, but rather of the kernels $\Lambda^{N+1}_N$ defined from them as in $(\ref{finitekernelgeneral})$. We therefore only give explicit expressions for $\Lambda^{N+1}_N$ in Section $\ref{finitekernels}$ and forget about the multiplicities $m(\lambda, \mu)$.

Finally let us define some bijections of $\GTp_N$ that will be used in the rest of the paper. Let
\begin{equation*}
\Omega_N \myeq \{(x_1 > x_2 > \ldots > x_N) \in \Zp^N\}
\end{equation*}
be the set of $N$-tuples of strictly decreasing nonnegative integers and also let
\begin{equation*}
\Omega_N^{\epsilon} \myeq \{(\hatx_1 > \hatx_2 > \ldots > \hatx_N) : \hatx_i\in\Zp^{\epsilon}\},
\end{equation*}
where $\hatx = (x + \epsilon)^2$, for any $x\in\Zp$.

There is a clear bijection between the three infinite sets above given by
\begin{gather}
\GTp_N \longleftrightarrow \Omega_N \longleftrightarrow \Omega_N^{\epsilon}\label{bijection}\\
\lambda = (\lambda_1, \ldots, \lambda_N) \longleftrightarrow l = (l_1, \ldots, l_N) \longleftrightarrow \widehat{l} = (\hatl_1, \ldots, \hatl_N),\nonumber
\end{gather}
where
\begin{eqnarray*}
l_i &=& \lambda_i + N - i, \hspace{.23in}i = 1, 2, \ldots, N\\
\hatl_i &=& (l_i + \epsilon)^2, \hspace{.4in} i = 1, 2, \ldots, N.
\end{eqnarray*}

For notation, let us agree that $\lambda\in\GT^+$ corresponds to $l\in\bigsqcup_{N \geq 1}{\Omega_N}$ and $\hatl\in\bigsqcup_{N\geq 1}{\Omega_N^{\epsilon}}$. Similarly, let us agree that $\mu, \nu\in\GT^+$ correspond to $m, n\in\bigsqcup_{N \geq 1}{\Omega_N}$ and $\hatm, \hatn\in\bigsqcup_{N \geq 1}{\Omega_N^{\epsilon}}$, respectively. This terminology will be in place throughout the paper.

\subsection{Jacobi polynomials}\label{jacobi}

In order to define the kernels $\Lambda^{N+1}_N : \GT_{N+1}^+ \dashrightarrow\GTp_N$ in the next section, we recall here some definitions and results on the multivariate analogues of classical Jacobi polynomials. In this section, we can work with any parameters $a, b > -1$.

Let $\mathfrak{m}(dx)$ be the measure on $[-1, 1]$ which is absolutely continuous with density
\begin{equation*}
(1 - x)^a(1 + x)^b,
\end{equation*}
with respect to the Lebesgue measure on $[-1, 1]$. The classical (univariate) Jacobi polynomials $\{\mathfrak{P}_k(x | a, b)\}_{k\in\Zp}$, are the elements of the Hilbert space $H = L^2([-1, 1], \mathfrak{m}(dx))$ coming from the orthogonalization of the basis $\{1, x, x^2, \ldots\}$; in particular, $\mathfrak{P}_0 = 1$, $\deg\mathfrak{P}_k = k$ for all $k\in\Zp$, and $\{\mathfrak{P}_k\}_{k\in\Zp}$ is an orthogonal basis of $H$.

For any $\lambda\in\GTp_N$, we can define the {\it multivariate Jacobi polynomial}
\begin{eqnarray}\label{multjacobi}
\mathfrak{P}_{\lambda}(x_1, \ldots, x_N | a, b) \myeq \frac{\det_{1\leq i, j\leq N}[{\mathfrak{P}_{\lambda_i + N - i}(x_j | a, b)}]}{\Delta_N(x)},
\end{eqnarray}
where $\Delta_N(x)$ is the Vandermonde determinant
\begin{eqnarray*}
\Delta_N(x) = \prod_{1\leq i < j\leq N}{(x_i - x_j)}.
\end{eqnarray*}
We let $H_N\subset L^2([-1, 1]^N, \mathfrak{m}_N(dx))$ be the Hilbert space of symmetric functions on $N$ variables $x_i\in [-1, 1]$, $i = 1, \dots, N$, that are square integrable with respect to the measure $\mathfrak{m}_N(dx)$ on $[-1, 1]^N$ with density
\begin{eqnarray}\label{densityN}
\Delta_N(x)^2\prod_{i=1}^N{(1 - x_i)^a(1 + x_i)^b},
\end{eqnarray}
with respect to the Lebesgue measure on $[-1, 1]^N$.

The Jacobi polynomials $\{\mathfrak{P}_{\lambda}\}_{\lambda\in\GTp_N}$ form an orthogonal basis of $H_N$. They are not homogeneous polynomials; in fact, $\mathfrak{P}_{\lambda}$ has degree $|\lambda| \myeq \lambda_1 + \ldots + \lambda_N$ and its highest degree homogeneous component is a multiple of the Schur polynomial parametrized by $\lambda$ (see \cite{M} for the definition and combinatorial properties of Schur polynomials): \begin{equation*}
\mathfrak{P}_{\lambda}(x_1, \ldots, x_N | a, b) = c_{\lambda|N}^{a, b}s_{\lambda}(x_1, \ldots, x_N) + \textrm{ lower degree terms}, \hspace{.2in}c_{\lambda|N}^{a, b} \neq 0.
\end{equation*}

By virtue of the fact that Schur polynomials $s_{\lambda}(x_1, \ldots, x_N)$, $\lambda\in\GTp_N$, form a basis of the algebra of symmetric polynomials in $N$ variables, the polynomials $\{\mathfrak{P}_{\lambda}\}_{\lambda\in\GTp_N}$ also form a basis of the algebra of symmetric polynomials in the variables $x_1, \ldots, x_N$.

We also consider the following \textit{normalized Jacobi polynomials}
\begin{eqnarray}\label{normjacobi}
\Phi_{\lambda}(x_1, \ldots, x_N | a, b) \myeq \frac{\mathfrak{P}_{\lambda}(x_1, \ldots, x_N | a, b)}{\mathfrak{P}_{\lambda}(1, \ldots, 1 | a, b)},
\end{eqnarray}
where there are $N$ ones in the denominator. The normalization is such that $\Phi_{\lambda}(1^N | a, b) = 1$, for any $\lambda\in\GTp_N$. Definition $(\ref{normjacobi})$ is allowed because the identity
\begin{eqnarray}\label{evalones}
\mathfrak{P}_{\lambda}(1^N | a, b) = \Delta_N(\hatl)\cdot\prod_{i=1}^N{\frac{\Gamma(l_i + a + 1)2^{- N + i}}{\Gamma(l_i + 1)\Gamma(a+i)\Gamma(i)}},
\end{eqnarray}
shows that the denominator $\mathfrak{P}_{\lambda}(1^N | a, b)$ of $(\ref{normjacobi})$ is nonzero. In the formula $(\ref{evalones})$, we recall the notation $l = (l_1, \ldots, l_N)$, $l_i = \lambda_i + N - i$ and $\hatl = (\hatl_1, \ldots, \hatl_N)$, $\hatl_i = (l_i + \epsilon)^2$. Identity $(\ref{evalones})$ was proved in \cite[Prop. 7.1]{OlOs}.

\subsection{Markov kernels $\GT_{N+1}^+ \dashrightarrow \GTp_N$}\label{finitekernels}

Since all sets $\GTp_N$ are countable, a Markov kernel $\GT_{N+1}^+ \dashrightarrow \GTp_N$ is given by a stochastic matrix $[\Lambda^{N+1}_N(\lambda, \mu)]_{\lambda\in\GT_{N+1}^+, \mu\in\GTp_N}$ of format $\GT_{N+1}^+ \times \GTp_N$. We then define our desired Markov kernel by considering the coefficients of the following branching of normalized Jacobi polynomials \begin{eqnarray}\label{linksdef}
\Phi_{\lambda}(x_1, \ldots, x_N, 1 | a, b) = \sum_{\mu\in\GTp_N}{\Lambda^{N+1}_N(\lambda, \mu)\Phi_{\mu}(x_1, \ldots, x_N | a, b)}, \hspace{.2in}\lambda\in\GTp_{N+1}.
\end{eqnarray}

In more detail, $\Phi_{\lambda}(x_1, \ldots, x_N, 1 | a, b)$ is a symmetric polynomial in the variables $x_1, \ldots, x_N$, while the polynomials $\{\Phi_{\mu}\}_{\mu\in\GTp_N}$ form a basis of the algebra of symmetric polynomials on those $N$ variables. Therefore $\Phi_{\lambda}(x_1, \ldots, x_N, 1 | a, b)$ can be expressed as a finite linear combination of the polynomials $\{\Phi_{\mu}\}_{\mu\in\GTp_N}$; the coefficients of such unique linear combination determine the matrix $[\Lambda^{N+1}_N(\lambda, \mu)]$. Observe that each row of $[\Lambda^{N+1}_N(\lambda, \mu)]$ has finitely many nonzero entries. The coefficients $\Lambda^{N+1}_N(\lambda, \mu)$ generally depend on $a, b$, but for simplicity we omit them from the notation.

Let us check that $[\Lambda^{N+1}_N(\lambda, \mu)]$ is a stochastic matrix. Setting $x_1 = \ldots = x_N = 1$ in $(\ref{linksdef})$ gives
\begin{equation*}
1 = \sum_{\mu\in\GTp_N}{\Lambda^{N+1}_N(\lambda, \mu)}.
\end{equation*}
The fact that $\Lambda^{N+1}_N(\lambda, \mu) \geq 0$, for all $\mu\in\GTp_N$, is shown below, see $(\ref{links})$. Then $[\Lambda^{N+1}_N(\lambda, \mu)]_{\lambda, \mu}$ is indeed a stochastic matrix and determines a kernel $\GTp_N \dashleftarrow \GTp_{N+1}$.

An explicit expression for $\Lambda^{N+1}_N(\lambda, \mu)$ can be extracted from the proof of \cite[Prop. 7.5]{OO1}. Indeed, first observe that
\begin{equation}\label{firsthelp}
\Lambda^{N+1}_N(\lambda, \mu) = \frac{\mathfrak{P}_{\mu}(1^N | a, b)}{\mathfrak{P}_{\lambda}(1^{N+1} | a, b)}C^{a, b}_N(\lambda, \mu),
\end{equation}
where the coefficients $C^{a, b}_N(\lambda, \mu)$ are given by the expansion
\begin{equation}\label{secondhelp}
\mathfrak{P}_{\lambda}(x_1, \ldots, x_N, 1 | a, b) = \sum_{\mu\in\GTp_N}{C^{a, b}_{N}(\lambda, \mu)\mathfrak{P}_{\mu}(x_1, \ldots, x_N | a, b)}.
\end{equation}
In \cite[Sec. 7]{OO1}, it is found that $C^{a, b}_N(\lambda, \mu) = 0$ unless $\lambda \succ_{BC} \mu$, in which case
\begin{eqnarray}
C^{a, b}_N(\lambda, \mu) &=& \frac{\prod_{i=1}^{N+1}{\mathfrak{P}_{\lambda_i + N + 1 - i}(1 | a, b)}}{\prod_{i=1}^N{\mathfrak{P}_{\mu_i + N - i}(1 | a, b)}}\sum_{\substack{\nu \prec \lambda \\ \mu \prec \nu\cup 0}}{A(\nu, \mu)},\nonumber\\
\textrm{where } A(\nu, \mu) &=& \prod_{i=1}^N{B(\nu_i + N - i, \mu_i + N - i)},\nonumber\\
B(r, s) &=& \frac{(2r + a + b + 2)(2s + a + b + 1) r!}{2\cdot s!}\nonumber\label{Bmldef}\\
&& \times\frac{\Gamma(s+a+b+1)\Gamma(r+b+1)\Gamma(s+a+1)}{\Gamma(r+a+b+2)\Gamma(s+b+1)\Gamma(r+a+2)}, \hspace{.1in} s > 0,\\
B(r, 0) &=& \frac{(2r+a+b+2)r!\cdot\Gamma(r+b+1)\Gamma(a+1)\Gamma(a+b+2)}{2\cdot\Gamma(r+a+b+2)\Gamma(r+a+2)\Gamma(b+1)}.\label{Bmzerodef}
\end{eqnarray}
Thus, from $(\ref{evalones})$, we have
\begin{eqnarray}\label{links}
\Lambda^{N+1}_N(\lambda, \mu) &=& 2^NN!\cdot\frac{\Gamma(N+a+1)}{\Gamma(a+1)}\frac{\Delta_N(\hatm)}{\Delta_{N+1}(\hatl)}\cdot\widetilde{\Lambda}^{N+1}_N(\lambda, \mu)\label{stochmatrix},\\
\textrm{where }\hspace{.15in}\widetilde{\Lambda}^{N+1}_N(\lambda, \mu) &=& \sum_{\substack{\nu\in\GTp_N \\ \nu \prec \lambda \\ \mu \prec \nu\cup 0}}{\prod_{i=1}^N{B(\nu_i + N - i, \mu_i + N - i)}},\label{sumterm}
\end{eqnarray}
and $\hatl, \hatm$ correspond to $\lambda, \mu$ as described in $(\ref{bijection})$. The sum in $(\ref{sumterm})$ vanishes unless $\mu \prec_{BC} \lambda$ (for only in that case, there exists at least one $\nu\in\GTp_N$ with $\nu \prec \lambda$ and $\mu \prec \nu\cup 0$). From the formula $(\ref{stochmatrix})$ above, it is clear that $\Lambda^{N+1}_N(\lambda, \mu) \geq 0$, for all $\lambda\in\GTp_{N+1}$, $\mu\in\GTp_N$.

\subsection{The boundary of the BC branching graph}\label{boundary}

In this section and the next, we review the explicit description of the boundary of the BC branching graph, following \cite{OO1}.

Consider the space $\R^{2\infty + 1} \myeq \R^{\infty}\times\R^{\infty}\times\R$, equipped with its product topology. Let $\Omega_{\infty} \subset \R^{2\infty + 1}$ be the subset of points $\omega = (\alpha, \beta, \delta)$ satisfying
\begin{eqnarray*}
\alpha &=& (\alpha_1 \geq \alpha_2 \geq \ldots \geq 0)\\
\beta &=& (1\geq \beta_1 \geq \beta_2 \geq \ldots \geq 0)\\
\delta &\geq& \sum_{i=1}^{\infty}{(\alpha_i + \beta_i)}.
\end{eqnarray*}
Then $\Omega_{\infty}$ is a closed subspace of $\R^{2\infty + 1}$, with the inherited topology from $\R^{2\infty + 1}$. Moreover, $\Omega_{\infty}$ is a locally compact space with a countable base. It will be convenient to set
\begin{eqnarray}\label{gammadef}
\gamma = \delta - \sum_{i=1}^{\infty}{(\alpha_i  + \beta_i)} \geq 0.
\end{eqnarray}

The space $\Omega_{\infty}$ is in bijection with the boundary of the BC branching graph, as in Definition $\ref{boundarydef}$. Recall that the boundary of the BC branching graph is the set of extreme points of $\lim_{\leftarrow}{\mathcal{M}_p(\GTp_N)}$. If we give the space $\lim_{\leftarrow}{\mathcal{M}_p(\GTp_N)}$ the projective limit topology and its set of extreme points the subspace topology, then the boundary of the BC branching graph is actually homeomorphic to $\Omega_{\infty}$, with its topology inherited from $\R^{2\infty + 1}$.

We shall only be interested in the bijection $\MM_p(\Omega_{\infty}) \longrightarrow \lim_{\leftarrow}{\MM_p(\GTp_N)}$, cf. $(\ref{boundarydefeqn})$, so we focus on its description rather than in the description of the bijection between $\Omega_{\infty}$ and the set of extreme points of $\lim_{\leftarrow}{\MM_p(\GTp_N)}$.

To accomplish it we define, in the next section, Markov kernels $\Lambda^{\infty}_N: \Omega_{\infty} \dashrightarrow \GTp_N$ which are coherent in the sense that $\Lambda^{\infty}_{N+1}\Lambda^{N+1}_N = \Lambda^{\infty}_N$, cf. $(\ref{cohinftyN})$.

\subsection{Markov kernels $\Omega_{\infty}\dashrightarrow \GTp_N$}\label{infinitekernel}

For each $\omega = (\alpha, \beta,\delta)\in\Omega_{\infty}$, define the function
\begin{equation}\label{psifunction}
\Psi(x; \omega) = e^{\gamma(x - 1)}\prod_{i=1}^{\infty}{\frac{1 + \beta_i(2 - \beta_i)(x - 1)/2}{1 - \alpha_i(2 + \alpha_i)(x - 1)/2}}, \hspace{.1in} x\in [-1, 1],
\end{equation}
where $\gamma$ is defined from $\omega$ as in $(\ref{gammadef})$.
Since the sum $\sum_{i=1}^{\infty}{(\alpha_i + \beta_i)}$ is convergent, the function $\Psi(x; \omega)$ is holomorphic in a complex neighborhood of $[-1, 1]$, and in particular $\Psi(x; \omega)$ is a continuous function on $[-1, 1]$. Moreover $\Psi(1; \omega) = 1$ and $|\Psi(x; \omega)| = \Psi(x; \omega) \leq 1, \ \forall x\in [-1, 1]$.

In analogy to Section $\ref{finitekernels}$, we define a kernel of format $\Omega_{\infty}\times\GTp_N$ via the following branching equation
\begin{eqnarray}\label{markovinftyN}
\Psi(x_1; \omega)\cdots\Psi(x_N; \omega) = \sum_{\lambda\in\GTp_N}{\Lambda^{\infty}_N(\omega, \lambda)\Phi_{\lambda}(x_1, \ldots, x_N | a, b)}, \hspace{.1in} x_i \in [-1, 1].
\end{eqnarray}
In more detail, $\Psi(x_1; \omega)\cdots\Psi(x_N; \omega)$ is a continuous function on  $[-1, 1]^N$ and therefore it belongs to the Hilbert space $H_N$ of symmetric functions on $[-1, 1]^N$ which are square-integrable with respect to $\mathfrak{m}_N(dx)$, the measure with density $(\ref{densityN})$ with respect to the Lebesgue measure on $[-1, 1]^N$.
The normalized Jacobi polynomials $\Phi_{\lambda}(\cdot | a, b)$ form an orthogonal basis of $H_N$, so an expansion like in $(\ref{markovinftyN})$ exists and is unique. Just as for the finite kernels $\Lambda^{N+1}_N$ defined previously, we omit $a, b$ from the notation of the kernels $\Lambda^{\infty}_N$.

Let us check that the terms $\Lambda^{\infty}_N(\omega, \lambda)$ indeed define a Markov kernel. By setting $x_1 = x_2 = \ldots = x_N = 1$ in $(\ref{markovinftyN})$, we obtain \begin{equation*}
1 = \sum_{\lambda\in\GTp_N}{\Lambda^{\infty}_N(\omega, \lambda)}.
\end{equation*}

The fact that $\Lambda^{\infty}_N(\omega, \lambda) \geq 0$, for all $\omega\in\Omega_{\infty}$, $\lambda\in\GTp_N$ is not obvious, but it can be deduced from the results of \cite{OO1}.
The continuity of $\Lambda^{\infty}_N(\omega, \lambda)$ as a function of $\omega$ can be deduced from the continuity of the map $\Omega \rightarrow C([-1, 1]), \ \omega \mapsto \Psi(\cdot; \omega)$. The continuity of $\omega \mapsto \Psi(\cdot; \omega)$ can be proved by mimicking the approach of \cite[Proof of Theorem 8.1, Step 1]{Ol2}. Then the expressions $\Lambda^{\infty}_N(\omega, \lambda)$ yield a Markov kernel $\Omega_{\infty}\dashrightarrow\GTp_N$.

An evident property, based on the definitions $(\ref{linksdef})$ and $(\ref{markovinftyN})$ is the following
\begin{equation*}
\Lambda^{\infty}_{N+1}\Lambda^{N+1}_N = \Lambda^{\infty}_N, \hspace{.2in} N = 1, 2, \ldots.
\end{equation*}

Therefore the Markov kernels $\Lambda^{\infty}_N: \Omega_{\infty} \dashrightarrow \GTp_N$ induce maps $\Lambda^{\infty}_N: \MM_p(\Omega_{\infty}) \rightarrow \MM_p(\GTp_N)$, which are also coherent in the sense that $\Lambda^{\infty}_{N+1}\Lambda^{N+1}_N = \Lambda^{\infty}_N$.
Such coherence allows us to define
\begin{equation}\label{themap}
\MM_p(\Omega_{\infty}) \longrightarrow \lim_{\leftarrow}{\MM_p(\GTp_N)},
\end{equation}
which one can prove is a bijection by combining the general results from \cite{OO1} and \cite[Thm. 9.2]{Ol2}. In fact, an argument similar to the proof of \cite[Thm. 3.1]{BO2} shows that $(\ref{themap})$ is actually an isomorphism of measurable spaces if both spaces are equipped with their natural $\sigma$-algebras, therefore the third item of Definition $\ref{boundarydef}$ is satisfied. The bijection $(\ref{themap})$ will be important later to access the definition of $z$-measures in a very concrete way.

\subsection{Feller property of the BC branching graph}

We prove that the boundary $\Omega_{\infty}$ is Feller in the sense of Definition $\ref{boundaryfeller}$.
We have already argued for the first and third bullets in the definition, and are left to prove the second bullet.
It will follow from Propositions $\ref{pro1}$ and $\ref{pro2}$ below.

\begin{prop}\label{pro1}
For each $N \geq 1$, the Markov kernels $\Lambda^{N+1}_N : \GT_{N+1}^+ \dashrightarrow \GTp_N$ defined in Section $\ref{finitekernels}$ are Feller kernels.
\end{prop}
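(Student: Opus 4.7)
Since $\GTp_{N+1}$ and $\GTp_N$ are countable discrete spaces, a standard approximation of $f\in C_0(\GTp_N)$ by finitely supported functions (combined with $\sum_\mu\Lambda^{N+1}_N(\lambda,\mu)=1$) reduces the Feller property to the pointwise statement
$$
\lim_{\lambda\to\infty}\Lambda^{N+1}_N(\lambda,\mu)=0 \qquad \text{for each fixed } \mu\in\GTp_N,
$$
where $\lambda\to\infty$ means that $\lambda$ leaves every finite subset of $\GTp_{N+1}$.

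First I would carry out a support analysis. If $\Lambda^{N+1}_N(\lambda,\mu)\neq 0$, there exists $\nu\in\GTp_N$ with $\lambda\succ\nu$ and $\nu\cup 0\succ\mu$; combining the two interlacings gives $\lambda_j\leq\nu_{j-1}\leq\mu_{j-2}$ for every $j\geq 3$, so the tail coordinates $\lambda_3,\dots,\lambda_{N+1}$ are bounded by constants depending only on $\mu$. Consequently any sequence of $\lambda$'s leaving all finite subsets of $\GTp_{N+1}$ while staying in the support must satisfy $l_1\to\infty$. The same interlacings confine $\nu_1$ to $[\max(\lambda_2,\mu_1),\lambda_1]$ and each $\nu_i$ with $i\geq 2$ to a bounded interval determined by $\mu$, so $\nu_1$ is the only summation index in \eqref{sumterm} with possibly large range.

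Next I would combine two polynomial estimates arising from \eqref{stochmatrix}--\eqref{sumterm}. Using $\hatl_i-\hatl_j=(l_i-l_j)(l_i+l_j+2\epsilon)$, the gap $l_i-l_j\geq j-i$, and the boundedness of $l_3,\dots,l_{N+1}$, one obtains $\hatl_1-\hatl_j\gtrsim l_1^2$ for $j\geq 3$ once $l_1$ is large, and $\hatl_1-\hatl_2\geq(\lambda_1-\lambda_2+1)\,l_1$; since the remaining factors are $\geq 1$, this gives $\Delta_{N+1}(\hatl)\gtrsim(\lambda_1-\lambda_2+1)\,l_1^{2N-1}$. For the numerator, Gamma-function asymptotics applied to \eqref{Bmldef} yield $B(r,s)=O(r^{-2a-1})$ as $r\to\infty$; under $a\geq -\tfrac12$ this gives a uniform bound $B(r,s)\leq C(s,a,b)$, whence every product $\prod_i B(\nu_i+N-i,\mu_i+N-i)$ is bounded by $C(\mu,N,a,b)$, and counting admissible $\nu$'s yields $\widetilde\Lambda^{N+1}_N(\lambda,\mu)\lesssim \lambda_1-\lambda_2+1$. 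The factor $\lambda_1-\lambda_2+1$ then cancels, leaving $\Lambda^{N+1}_N(\lambda,\mu)\lesssim l_1^{1-2N}\to 0$ for every $N\geq 1$. The anticipated obstacle is precisely the regime in which $\lambda_2$ grows together with $\lambda_1$, for then $\hatl_1-\hatl_2$ degenerates to order $l_1$ rather than $l_1^2$; it is the cancellation of $(\lambda_1-\lambda_2+1)$ between the number of summands and this degenerate factor that makes the argument work, and this is the decisive step.
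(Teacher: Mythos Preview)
Your proposal is correct and follows essentially the same route as the paper: reduce to $\Lambda^{N+1}_N(\lambda,\mu)\to 0$, bound $B(r,s)$ uniformly via Gamma asymptotics, count admissible $\nu$'s to get $\widetilde\Lambda^{N+1}_N\lesssim\lambda_1-\lambda_2+1$, and cancel this against the corresponding factor in $\Delta_{N+1}(\hatl)$. The one minor difference is that you first run a support analysis to bound $\lambda_3,\dots,\lambda_{N+1}$ in terms of $\mu$, which yields the sharper Vandermonde estimate $\Delta_{N+1}(\hatl)\gtrsim(\lambda_1-\lambda_2+1)\,l_1^{2N-1}$ and hence $\Lambda^{N+1}_N\lesssim l_1^{1-2N}$; the paper skips this step here and uses only $\hatl_1-\hatl_j\geq l_1+l_j+2\epsilon\gtrsim\lambda_1$ for $j\geq 2$, obtaining the weaker but sufficient bound $\Lambda^{N+1}_N\lesssim\lambda_1^{-N}$ (it later revisits your refinement when a stronger estimate is needed in Lemma~\ref{techcauchy}).
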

\begin{proof}
We need to prove that the map $C(\GTp_N) \longrightarrow C(\GTp_{N+1})$ induced by $\Lambda^{N+1}_N$ sends $C_0(\GTp_N)$ into $C_0(\GTp_{N+1})$. As the operator $C(\GTp_N) \longrightarrow C(\GT_{N+1}^+)$ is bounded (actually a contraction), $C_0(\cdot)$ is a closed subspace of $C(\cdot)$ and the delta functions span a dense subspace of $C_0(\cdot)$, it suffices to prove that $\Lambda^{N+1}_N\delta_{\mu}\in C_0(\GT_{N+1}^+)$, for all $\mu\in\GTp_N$. Equivalently, we shall prove
\begin{equation*}
\Lambda^{N+1}_N\delta_{\mu}(\lambda) = \Lambda^{N+1}_N(\lambda, \mu) \longrightarrow 0 \textrm{ as } \lambda \longrightarrow \infty, \hspace{.1in} \textrm{for any }\mu\in\GTp_N.
\end{equation*}

Given $\lambda\in\GT_{N+1}^+$, we have from $(\ref{stochmatrix})$ and $(\ref{sumterm})$ that
\begin{equation}\label{expr1}
\Lambda^{N+1}_N(\lambda, \mu) = 2^NN!\cdot\frac{\Gamma(N+a+1)}{\Gamma(a+1)}\frac{\Delta_N(\hatm)}{\Delta_{N+1}(\hatl)}\cdot\widetilde{\Lambda}^{N+1}_N(\lambda, \mu),
\end{equation}
where
\begin{eqnarray*}
\widetilde{\Lambda}^{N+1}_N(\lambda, \mu) &=& \sum_{\substack{\nu\in\GTp_N \\ \nu \prec \lambda \\ \mu \prec \nu\cup 0}}{A(\nu, \mu)} = \sum_n{\sum_{\nu}{A(\nu, \mu)}}\\
&=& \sum_n{\left(B(n + N - 1, \mu_1 + N - 1)\cdot\sum_{\nu}{\prod_{i=2}^N{B(\nu_i + N - i, \mu_i + N - i)}}\right)}.
\end{eqnarray*}
In the sums above, $n$ ranges over $n\in\Zp$, $\lambda_1 \geq n \geq \max\{\lambda_2, \mu_1\}$, and $\nu$ ranges over $\nu\in\GTp_N$ such that $\nu\prec\lambda$, $\mu\prec\nu\cup 0$ and $\nu_1 = n$. Observe that the second sum over $\nu$ is finite and the summand does not depend on the first coordinate $\nu_1$.

Well-known estimates of the Gamma function, cf. \cite[Cor. 1.4.3]{AAR}, give
\begin{equation*}
\frac{r!}{\Gamma(r + a + 2)}, \hspace{.1in}\frac{\Gamma(r + b + 1)}{\Gamma(r + a + b + 2)} \sim r^{-a - 1}, \hspace{.2in} r\longrightarrow+\infty.
\end{equation*}
If $s\neq 0$ is fixed, then by definition $(\ref{Bmldef})$ of $B(r, s)$, and the previous estimate, we obtain
\begin{eqnarray}\label{estimate1}
B(r, s) \leq const\cdot (1 + r^{-2a-1}), \ \forall r\in\Zp.
\end{eqnarray}
Similar reasoning using the definition $(\ref{Bmzerodef})$ of $B(r, 0)$ shows that $(\ref{estimate1})$ holds also when $s = 0$ and $r\rightarrow +\infty$. Moreover, $\mu\prec\nu\cup 0$ implies $\mu_1 \geq \nu_2 \geq \mu_2 \geq \ldots$, therefore
\begin{equation}\label{estimate2}
\sum_{\nu}{\prod_{i=2}^N{B(\nu_i + N - i, \mu_i + N - i)}} \leq const
\end{equation}
where the right-hand side is a constant independent of $\lambda$.

From estimates $(\ref{estimate1}), (\ref{estimate2})$ above, it follows that $\widetilde{\Lambda}^{N+1}_N(\lambda, \mu)$ is upper-bounded by a constant independent of $\lambda$ times
\begin{gather*}
\sum_{\lambda_1\geq n\geq \max\{\lambda_2, \mu_1\}}{B(n + N - 1, \mu_1 + N - 1)}\leq const\cdot(1 + \sum_{n = \lambda_2}^{\lambda_1}{n^{-1-2a}})\\
\leq const\cdot(\lambda_1 - \lambda_2 + 1).
\end{gather*}
The latter inequality above follows because $a \geq -1/2$ implies $-1-2a \leq 0$, and so $n^{-1-2a} \leq 1$ for all $n\in\N$. Then $\sum_{n = \lambda_2}^{\lambda_1}{n^{-1-2a}} \leq \sum_{n = \lambda_2}^{\lambda_1}{1} = \lambda_1 - \lambda_2 + 1$. Of course, the argument above does not make sense if $\lambda_2 = 0$. But if $\lambda_2 = 0$, the bound is evident too.

On the other hand, we can estimate the Vandermonde determinant as
\begin{gather*}
\Delta_{N+1}(\hatl) = \prod_{i<j}{(\hatl_i - \hatl_j)} = \prod_{i<j}{(l_i-l_j)(l_i+l_j+2\epsilon)}\\
\geq (\lambda_1 - \lambda_2 + 1)\prod_{j=2}^{N+1}{(\lambda_1 + \lambda_j + 2\epsilon)} \geq (\lambda_1 - \lambda_2+1)\lambda_1^N.
\end{gather*}

From $(\ref{expr1})$ and the estimates above, we have
\begin{gather}\label{estimateLambda}
\Lambda^{N+1}_N(\lambda, \mu) \leq const\cdot\frac{\lambda_1 - \lambda_2 + 1}{\lambda_1^N(\lambda_1 - \lambda_2+1)}= const\cdot\lambda_1^{-N}
\end{gather}
Since $\lambda\rightarrow\infty$ is equivalent to $\lambda_1\rightarrow\infty$, it follows that $\Lambda^{N+1}_N(\lambda, \mu) \xrightarrow{\lambda\rightarrow\infty} 0$.
\end{proof}

\begin{prop}\label{pro2}
For each $N \geq 1$, the Markov kernels $\Lambda^{\infty}_N : \Omega_{\infty}\dashrightarrow\GTp_N$ defined in Section $\ref{infinitekernel}$ are Feller kernels.
\end{prop}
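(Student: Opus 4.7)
The plan is to verify the Feller condition (\ref{fellercountable}), adapted to the setting of a Markov kernel from a locally compact space to a countable one. Continuity of $\omega\mapsto\Lambda^\infty_N(\omega,\mu)$ was already argued in Section \ref{infinitekernel}, and since the induced map $C_0(\GTp_N)\to C(\Omega_\infty)$ is a contraction and the delta functions span a dense subspace of $C_0(\GTp_N)$, it suffices to show that for each fixed $\mu\in\GTp_N$ the function $\omega\mapsto\Lambda^\infty_N(\omega,\mu)$ vanishes at infinity on $\Omega_\infty$. By Tychonoff applied to $[0,C]^\infty\times[0,1]^\infty\times[0,C]$, the sublevel sets $\{\omega\in\Omega_\infty:\delta(\omega)\le C\}$ are compact and exhaust the compact subsets of $\Omega_\infty$, so ``$\omega\to\infty$'' is equivalent to $\delta(\omega)\to\infty$.

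Next, I extract $\Lambda^\infty_N(\omega,\mu)$ from (\ref{markovinftyN}) using orthogonality of $\{\Phi_\lambda\}_{\lambda\in\GTp_N}$ in $H_N$:
\begin{equation*}
\Lambda^\infty_N(\omega,\mu)=\frac{1}{\|\Phi_\mu\|^2_{H_N}}\int_{[-1,1]^N}\Psi(x_1;\omega)\cdots\Psi(x_N;\omega)\,\Phi_\mu(x_1,\ldots,x_N|a,b)\,\mathfrak{m}_N(dx).
\end{equation*}
Because $0\le\Psi(\cdot;\omega)\le 1$ and $\Phi_\mu$ is a polynomial (hence bounded on $[-1,1]^N$), the integrand is dominated by $\|\Phi_\mu\|_\infty$, which is $\mathfrak{m}_N$-integrable since $\mathfrak{m}_N$ is a finite measure. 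The proposition will then follow from the dominated convergence theorem, provided I can show that $\Psi(x;\omega^{(k)})\to 0$ for almost every $x\in[-1,1]$ along any sequence with $\delta(\omega^{(k)})\to\infty$.

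The heart of the argument is this pointwise decay claim, which I establish for all $x\in[-1,1)$ (a set of full $\mathfrak{m}_N$-measure on each factor). Setting $y:=1-x\in(0,2]$, I use the factorization
\begin{equation*}
\Psi(x;\omega)=e^{-\gamma y}\cdot\prod_i\bigl(1-\tfrac{1}{2}\beta_i(2-\beta_i)y\bigr)\cdot\prod_i\bigl(1+\tfrac{1}{2}\alpha_i(2+\alpha_i)y\bigr)^{-1},
\end{equation*}
in which each factor lies in $[0,1]$. Since $\delta=\gamma+\sum_i(\alpha_i+\beta_i)$, at least one of $\gamma^{(k)}$, $\sum_i\alpha_i^{(k)}$, $\sum_i\beta_i^{(k)}$ tends to infinity along a subsequence. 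In the first case the exponential factor vanishes. In the $\beta$-case, using $\beta_i\in[0,1]$ and the elementary bound $-\log(1-t)\ge t$ for $t\in[0,1)$ I get $-\sum_i\log(1-\tfrac{1}{2}\beta_i(2-\beta_i)y)\ge\tfrac{y}{2}\sum_i\beta_i\to\infty$. In the $\alpha$-case, either $\alpha_1^{(k)}\to\infty$, which makes a single factor of the third product tend to $0$, or the full sequence $\alpha_i^{(k)}$ remains bounded by some $M$, in which case $\log(1+t)\ge t/(1+t)$ gives a uniform estimate $\log(1+\tfrac{1}{2}\alpha_i(2+\alpha_i)y)\ge c_{y,M}\alpha_i$ with $c_{y,M}>0$, whence $\sum_i\log(\cdots)\to\infty$ and the third product vanishes. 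The principal technical subtlety is precisely this last subcase, where individual $\alpha_i^{(k)}$ stay bounded but their sum diverges, so a term-by-term bound does not suffice and one must exploit that bounded $t$ gives a linear-in-$t$ lower bound for $\log(1+t)$.
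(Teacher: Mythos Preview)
Your proof is correct and follows essentially the same route as the paper's: both reduce to showing that $\Psi(x;\omega)\to 0$ away from $x=1$ as $\omega\to\infty$, via the same three-case split according to whether $\gamma$, $\sum_i\beta_i$, or $\sum_i\alpha_i$ diverges (with the further dichotomy on boundedness of $\alpha_1$ in the last case), and then integrate against the bounded dominating function to conclude. The only cosmetic differences are that the paper proves \emph{uniform} decay on $[-1,1-\epsilon]$ and passes through strong $L^2$ convergence, whereas you use pointwise a.e.\ decay plus dominated convergence; and that the paper parametrizes $x=(z+z^{-1})/2$ on the unit circle while you work directly with $y=1-x$. One stylistic point: your subsequence argument (``at least one tends to infinity along a subsequence'', then ``either $\alpha_1^{(k)}\to\infty$ or bounded'') should be phrased as the standard sub-subsequence principle to conclude convergence of the full sequence, but the content is right.
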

\begin{proof}
The argument in this proof was proposed by Grigori Olshanski.

Like in the proof of Proposition $\ref{pro1}$, the only difficult part is to show that $\Lambda^{\infty}_N\delta_{\lambda}\in C_0(\Omega_{\infty})$, for all $\lambda\in\GTp_N$. Equivalently, we prove
\begin{equation*}
\Lambda^{\infty}_N\delta_{\lambda}(\omega) = \Lambda^{\infty}_N(\omega, \lambda) \longrightarrow 0, \textrm{ as } \omega\longrightarrow\infty, \textrm{ for any } \lambda\in\GTp_N.
\end{equation*}

This is proved in two steps. Our proof follows an idea suggested by G. I. Olshanski.\\

\textbf{Step 1.} We prove that
\begin{gather}
|\Psi(x; \omega)| \xrightarrow{\omega\rightarrow\infty} 0 \textrm{ uniformly on $I_{\epsilon}\myeq\{x\in [-1, 1] : \Re(x) < 1 - \epsilon\}$,}\label{step1}\\
\textrm{for any $\epsilon > 0$ sufficiently small.}\nonumber
\end{gather}

We can write $x = (z + z^{-1})/2$, for some complex number $z\in\C$, $|z| = 1$, the claim transforms into:
\begin{gather}
\left|\Psi\left(\frac{z+z^{-1}}{2}; \omega\right)\right| \xrightarrow{\omega\rightarrow\infty} 0 \textrm{ uniformly on $\TT_{\epsilon} \myeq \{z\in\C : |z| = 1, \Re(z) < 1 - \epsilon  \}$,}\label{step1rewritten}\\
\textrm{for any $\epsilon > 0$ sufficiently small.}\nonumber
\end{gather}

From $(\ref{psifunction})$, cf. \cite[(1.12)]{OO1}, we have
\begin{equation}\label{psirewritten}
\left|\Psi\left(\frac{z+z^{-1}}{2}; \omega\right)\right| = |e^{\frac{\gamma}{2}(z + z^{-1} - 2)}|\cdot\prod_{i=1}^{\infty}{\frac{|1 + \frac{\beta_i}{2}(z-1)||1 + \frac{\beta_i}{2}(z^{-1} - 1)|}{|1 - \frac{\alpha_i}{2}(z-1)||1 - \frac{\alpha_i}{2}(z^{-1}-1)|}}.
\end{equation}

Let $\{\omega(k)\}_{k\in\N}$, $\omega(k) = (\alpha(k), \beta(k), \delta(k))\in\Omega_{\infty}$, be a sequence of points that converges to $\infty$. Let also $\gamma(k)$ be defined from $\omega(k)$ as in $(\ref{gammadef})$.

If a subsequence $(k_m)_{m=1, 2, \ldots}$ is such that $\lim_{m\rightarrow\infty}{\alpha_1(k_m)} = \infty$, then evidently the corresponding factors $|1-\frac{\alpha_1}{2}(z-1)|^{-1} |1-\frac{\alpha_1}{2}(z^{-1}-1)|^{-1}$ converge to $0$ uniformly on $z\in\TT_{\epsilon}$. Therefore, we can assume that there exists a constant $A>0$ such that $\sup_{k\in\N}{\alpha_1(k)} < A$. Let us now make some estimates on the factors that appear in $(\ref{psirewritten})$.

\begin{itemize}
    \item $|e^{\frac{\gamma}{2}(z + z^{-1} - 2)}| = \exp{(-\gamma(1 - \Re(z)))}$.
    \item $|1 - \frac{\alpha_i}{2}(z-1)|^{-1} = \left(1 + \frac{\alpha_i(2+\alpha_i)}{2}(1 - \Re(z))\right)^{-1} \leq (1 + \alpha_i(1 - \Re(z)))^{-1} \leq \exp(-const\cdot\alpha_i(1 - \Re(z)))$, where $const > 0$ is a constant depending on $A>0$.
    \item Since $\Re(z^{-1}) = \Re(z)$, the estimation above  yields also $|1 - \frac{\alpha_i}{2}(z^{-1}-1)|^{-1} \leq \exp(-const\cdot\alpha_i(1 - \Re(z)))$, for the same positive constant.
    \item $|1 + \frac{\beta_i}{2}(z-1)| = 1 - \frac{\beta_i(2 - \beta_i)}{2}(1 - \Re(z)) \leq 1 - \frac{\beta_i}{2}(1 - \Re(z)) \leq \exp(-\frac{\beta_i}{2}(1 - \Re(z)))$.
    \item From the point above, since $\Re(z^{-1}) = \Re(z)$, $|1 + \frac{\beta_i}{2}(z^{-1} - 1)| \leq \exp(-\frac{\beta_i}{2}(1 - \Re(z)))$.
\end{itemize}
The assumption $\omega(k)\rightarrow\infty$ is equivalent to $\delta(k) = \gamma(k) + \sum_{i=1}^{\infty}{(\alpha_i(k) + \beta_i(k))} \xrightarrow{k\rightarrow\infty} \infty$. Therefore, for any $z\in\TT_{\epsilon}$, one of the factors in $(\ref{psirewritten})$ converges to $0$ as $\omega\longrightarrow\infty$. We conclude that $\left|\Psi\left(\frac{z+z^{-1}}{2}; \omega\right)\right| \xrightarrow{\omega\rightarrow\infty} 0$, uniformly on $z\in\TT_{\epsilon}$, for any small $\epsilon > 0$.\\

\textbf{Step 2.} We complete the proof of the Proposition for general $N\geq 1$.

By definition, $\prod_{i=1}^N{\Psi(x_i; \omega)} = \sum_{\lambda\in\GTp_N}{\Lambda^{\infty}_N(\omega, \lambda)\frac{\mathfrak{P}_{\lambda}(x_1, \ldots, x_N|a, b)}{\mathfrak{P}_{\lambda}(1^N|a, b)}}$ is a basis decomposition in the Hilbert space $H_N$ of symmetric functions on $[-1, 1]^N$ which are square integrable with respect to the measure with density $\Delta_N(x)^2\prod_{i=1}^N{(1-x_i)^a(1+x_i)^b}\prod_{i=1}^N{dx_i}$, with respect to the Lebesgue measure.

The elements $\{\mathfrak{P}_{\lambda}(\cdot | a, b)\}_{\lambda\in\GTp}$ form an orthogonal basis of $H_N$, so $\Lambda^{\infty}_N(\omega, \lambda)\xrightarrow{\omega\rightarrow\infty}0$ is equivalent to the convergence $\prod_{i=1}^N{\Psi(x_i; \omega(k))}\xrightarrow{weakly} 0$ in $H_N$, for any sequence $\omega(k)\rightarrow \infty$ in $\Omega_{\infty}$. 
We actually prove strong convergence, rather than weak convergence. By definition of the norm on $H_N$,
\begin{equation}\label{eqn:squarednorm}
\| \Psi(x_1; \omega(k))\cdots\Psi(x_N; \omega(k))\|_H^2 = \int_{[-1, 1]^N}{\prod_{i=1}^N{|\Psi(x_i; \omega(k))|^2}\Delta_N(x)^2\prod_{i=1}^N{\left((1-x_i)^a(1+x_i)^b dx_i\right)}}
\end{equation}

The estimates of step 1 show that $\prod_{i=1}^N{|\Psi(x_i; \omega(k))|} \xrightarrow{k\rightarrow\infty} 0$ uniformly on $(x_1, \ldots, x_N)\in[-1, 1-\epsilon]^N$, for any sufficiently small $\epsilon > 0$. By using additionally the uniform boundedness of the integrand with respect to $\omega$ (recall that $|\Psi(x; \omega)| \leq 1$ for all $x\in[-1, 1]$ and $\omega\in\Omega_{\infty}$), the squared norm $(\ref{eqn:squarednorm})$ converges to $0$ as $k\rightarrow\infty$, and the desired conclusion follows.
\end{proof}

\section{Feller Semigroups on the BC Branching Graph}\label{semigroupssection}

In this section, we construct Feller semigroups $(P_N(t))_{t\geq 0}$ on the discrete spaces $\GTp_N$.

\subsection{Regular jump homogeneous Markov chains}

We recall here some useful theory on the transition functions of regular jump homogeneous Markov chains (HMC) and set our terminology. For a more complete account on regular jump HMCs, see \cite[Ch. 1-3]{A}, \cite[Ch. 8-9]{B}.

Let $\X$ be an infinite countable space. Given a regular jump HMC $\{X(t)\}_{t\geq 0}$ on $\X$, let $(P(t))_{t\geq 0}$ be its transition function, i.e.,
\begin{equation*}
P(t; i, j) = Prob\{X(t) = j | X(0) = i\}, \hspace{.1in}t\geq 0,\ i, j\in\X.
\end{equation*}
Then $(P(t))_{t\geq 0}$ is a Markov semigroup, i.e., $P(0) = I$ and
\begin{equation}\label{eqn:chapmankolmogorov}
P(t + s) = P(t)P(s) \ \forall t, s \geq 0.
\end{equation}
The equations $(\ref{eqn:chapmankolmogorov})$ above are called the \textit{Chapman-Kolmogorov equations}. Moreover, the Markov semigroup $(P(t))_{t\geq 0}$ is \textit{standard}, meaning that we have the following entry-wise limit
\begin{equation*}
\lim_{t\rightarrow 0^+}{P(t)} = I.
\end{equation*}
In this section, we assume all Markov semigroups are standard. The entry-wise limit
\begin{equation*}
    Q = \lim_{h\downarrow 0}{\frac{P(h) - I}{h}}
\end{equation*}
of matrices exists and $Q = [q_{i, j}]_{i, j\in\X}$ is called the \textit{$q$-matrix of the semigroup $(P(t))_{t\geq 0}$}. We also have $q_{i, j} \in [0, \infty)$ whenever $i\neq j$ and $q_{i, i} \in [-\infty, 0]$. We write $q_i \myeq -q_{i, i}\in [0, \infty]$ for all $i\in\X$. It can be shown that $Q$ is \textit{stable} and \textit{conservative}, meaning that
\begin{gather}
q_i < \infty, \ \forall i\in\X,\label{positivecondition}\\
q_i = \sum_{j\in\X, j\neq i}{q_{i, j}}, \ \forall i\in\X.\label{sumzerocondition}
\end{gather}

Importantly, the semigroup satisfies Kolmogorov's backward and forward differential equations
\begin{eqnarray*}
P'(t) &=& QP(t),\\
P'(t) &=& P(t)Q.
\end{eqnarray*}

A matrix $Q$ with $q_{i, j} \geq 0$ for all $i\neq j$, and moreover satisfying $(\ref{positivecondition})$ and $(\ref{sumzerocondition})$ will be called a \textit{matrix of transition rates}.
It is a natural question to ask whether for an $\X\times\X$ matrix of transition rates $Q$, there exists a regular jump HMC on $\X$ having $Q$ as the $q$-matrix of its transition function $(P(t))_{t\geq 0}$. If so, we can also ask for conditions on $Q$ that guarantee the uniqueness of $(P(t))_{t\geq 0}$. For simplicity in the discussion to follow, we always assume that $Q$ is \textit{essential}, i.e., $q_i > 0$ for all $i\in\X$.

To answer the questions above we begin by constructing, from $Q$, a family of $\X\times\X$ nonnegative matrices $\overline{P}(t)$, $t\geq 0$, whose row sums are at most $1$ and such that they satisfy the Chapman-Kolmogorov equations and Kolmogorov's backward equation. Then in Proposition $\ref{uniquenessprop}$ below, we state that if all row sums of the matrices $\overline{P}(t)$ are exactly $1$, then $(\overline{P}(t))_{t\geq 0}$ is the unique Markov semigroup with $q$-matrix $Q$, and moreover there exist regular jump HMCs with the transition function $(\overline{P}(t))_{t\geq 0}$.

For each $n\geq 0$, define the $\X\times\X$ matrices $P^{[n]}(t)$, $n\in\N$, $t\geq 0,$ via the recurrence
\begin{equation}\label{recurrencekol}
P^{[n]}(t; i, j) =
    \begin{cases}
        e^{-q_it}\mathbf{1}_{\{i = j\}} & \textrm{if } n=0,\\
        \int_0^t{e^{-q_is}\sum_{\substack{k\in\X \\ k\neq i}}{q_{i, k}P^{[n-1]}(t-s; k, j)}ds} & \textrm{if } n\geq 1,
    \end{cases}
\end{equation}
for any $t\geq 0$ and $i, j\in\X$. Define also the entry-wise sum of matrices
\begin{equation}\label{defn:summatrices}
\overline{P}(t) \myeq \sum_{n=0}^{\infty}{P^{[n]}(t)}.
\end{equation}

All matrices $\overline{P}(t)$, $t\geq 0$, are \textit{substochastic}, i.e., $\overline{P}(t; i, j)\geq 0$, $i, j\in\X$, and $\sum_{j\in\X}{\overline{P}(t; i, j)} \leq 1$, $i\in E$. They satisfy $\overline{P}(0) = I$, the Chapman-Kolmogorov equations, $\overline{P}(t)\overline{P}(s) = \overline{P}(t+s)$, and Kolmogorov's backward equation $\frac{d}{dt}\overline{P}(t) = Q\overline{P}(t)$.

\begin{df}\label{defn:nonexplosive}
For a matrix of transition rates $Q$, define the semigroup $(\overline{P}(t))_{t \geq 0}$ via equations $(\ref{recurrencekol})$ and $(\ref{defn:summatrices})$ above. If all the matrices $\overline{P}(t)$, $t \geq 0$, are stochastic, then we say that $Q$ is \textit{non-explosive} or \textit{regular}.
\end{df}

Intuitively, the idea of non-explosion is the following. Starting from a matrix of transition rates $Q$, it is not difficult to construct a (not necessarily regular) jump HMC $\{X(t)\}_{t\geq 0}$ with transition function $(\overline{P}(t))_{t\geq 0}$ and $Q$ as its associated $q$-matrix, see \cite[Ch. 8]{B}. From $(\ref{recurrencekol})$, one can argue by induction that $P^{[n]}(t; i, j)$ is the probability that the process moves from $i$ to $j$ in exactly $n$ jumps during $[0, t]$.  Then for any $t\geq 0$, $i, j\in E$, $\overline{P}(t; i, j)$ is the probability that the process moves from $i$ to $j$ during $[0, t]$, after a \textit{finite} number of jumps. Thus if $Q$ was non-explosive, we can hope that $\{X(t)\}_{t\geq 0}$ is a regular jump HMC and therefore its transition function $(\overline{P}(t))_{t\geq 0}$ satisfies Kolmogorov's backward and forward equations. This is indeed true, and there is the additional fact that $(\overline{P}(t))_{t\geq 0}$ is uniquely attached to $Q$. We summarize our discussion in the next proposition.

\begin{prop}\label{uniquenessprop}
If $Q$ is a non-explosive matrix of transition rates on $\X$, then the semigroup $(\overline{P}(t))_{t\geq 0}$ constructed above is a standard Markov semigroup, and it is the unique solution to Kolmogorov's backward equation (or to Kolmogorov's forward equation). Moreover, $(\overline{P}(t))_{t\geq 0}$ is the unique (standard) Markov semigroup with $q$-matrix $Q$. Finally, for any probability measure on $\X$, there is a regular jump HMC with such measure as its initial distribution and with the transition function $(\overline{P}(t))_{t\geq 0}$.
\end{prop}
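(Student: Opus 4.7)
The plan is to follow the standard treatment of regular jump HMCs as developed in Anderson \cite{A} and Bhattacharya-Waymire \cite{B}. Most of the substantive content has been set up in the discussion preceding the proposition; what remains is to organize the pieces and carry out a minimality/uniqueness argument.

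First, I would verify that $(\overline{P}(t))_{t\geq 0}$ is a standard Markov semigroup. Substochasticity, the initial condition $\overline{P}(0) = I$, and the Chapman--Kolmogorov equations have been established in the discussion preceding the proposition. Non-explosion upgrades substochasticity to stochasticity, since by Definition $\ref{defn:nonexplosive}$ every row of $\overline{P}(t)$ sums to $1$. Standardness, i.e., the entry-wise limit $\lim_{t\to 0^+} \overline{P}(t) = I$, follows by noting that $P^{[0]}(t; i, j) = e^{-q_i t}\mathbf{1}_{\{i=j\}} \to \mathbf{1}_{\{i = j\}}$ and showing $\sum_{n \geq 1} P^{[n]}(t; i, j) \to 0$ as $t \to 0^+$; the latter is a direct induction on $n$ using $(\ref{recurrencekol})$.

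Next I would establish uniqueness among solutions of Kolmogorov's backward equation. This is the main technical step and relies on the minimality of the construction $(\ref{defn:summatrices})$. Given any nonnegative substochastic $R(t) = [R(t; i, j)]$ satisfying $R(0) = I$ and Kolmogorov's backward equation, passing to the equivalent integral form
\begin{equation*}
R(t; i, j) = e^{-q_i t}\mathbf{1}_{\{i = j\}} + \int_0^t e^{-q_i s}\sum_{k \neq i} q_{i,k}\, R(t - s; k, j)\, ds,
\end{equation*}
and iterating this identity against the recurrence $(\ref{recurrencekol})$, one obtains $R(t; i, j) \geq \sum_{n=0}^{N} P^{[n]}(t; i, j)$ for every $N$, and hence $R(t; i, j) \geq \overline{P}(t; i, j)$. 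Under non-explosion, the rows of $\overline{P}(t)$ already sum to $1$, so the inequality is forced to be an equality: $R = \overline{P}$. The analogous argument (swapping the roles of pre- and post-jump states) yields uniqueness for Kolmogorov's forward equation.

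For uniqueness among all standard Markov semigroups with $q$-matrix $Q$, the idea is to show that any such semigroup automatically satisfies the integral form of Kolmogorov's backward equation, using a first-passage decomposition at the initial holding time and the strong Markov property of the associated jump process, and then to invoke the uniqueness from the previous paragraph. Finally, to exhibit a regular jump HMC with transition function $(\overline{P}(t))_{t \geq 0}$ and arbitrary initial distribution, I would construct the canonical process by prescribing its embedded jump chain (discrete-time Markov chain on $\X$ with transition probabilities $q_{i,k}/q_i$) and independent exponentially distributed holding times of rates $q_i$; non-explosion guarantees that the successive jump times do not accumulate in any bounded interval almost surely, so the process extends to all of $[0, \infty)$. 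Identifying $P^{[n]}(t; i, j)$ with the probability of moving from $i$ to $j$ in exactly $n$ jumps during $[0, t]$ shows that its transition function coincides with $(\overline{P}(t))_{t \geq 0}$. The main obstacle throughout is the minimality argument of the second step; all other components are explicit construction or direct verification.
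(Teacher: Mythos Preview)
Your outline is correct and follows the standard textbook route (minimality of the backward-equation solution, then uniqueness by row-sum saturation, then the canonical jump-chain/holding-time construction). Note, however, that the paper does not actually prove this proposition: it is stated as a summary of classical facts, with the preceding discussion pointing to \cite[Ch.\ 1--3]{A} and \cite[Ch.\ 8--9]{B} for the full arguments. So there is nothing to compare against beyond confirming that your sketch matches the development in those references, which it does.
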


\subsection{General setup: a Doob $h$-transformation}\label{generalsetup}

Consider a countable state space
\begin{equation*}
E_1 = E \myeq \{e_1 < e_2 < e_3 < \ldots\} \subset\R_{\geq 0}
\end{equation*}
with no accumulation points (think of $E$ being equal to $\Zp$ or $\Zp^{\epsilon}$). We identify $E$ with $\Zp$ for convenience in notation. We construct continuous-time birth-and-death processes on $E_1 = E$ and regular jump HMCs on the spaces
\begin{equation*}
E_N \myeq \{\mathbf{x} = (x_1, \ldots, x_N)\in E^N : x_1 > x_2 > \ldots > x_N\}, \hspace{.2in} N>1,
\end{equation*}
via a \textit{Doob $h$-transformation}. Observe that $E_N$ can be identified with $\Omega_N$ or $\Omega_N^{\epsilon}$, for $N\geq 1$, if $E$ is identified with $\Zp$ or $\Zp^{\epsilon}$, respectively. Instead of focusing on the construction of the stochastic processes themselves, we focus on the construction of Feller semigroups on $E$ and $E_N$, see Remark $\ref{processesfeller}$. Propositions $\ref{generalcriterion}$ and $\ref{semigroupN}$ are the main general statements we prove. Their specializations to Propositions $\ref{fellerspecial1}$ and $\ref{fellerspecialN}$, will be the relevant ones to our setting.

\subsubsection{\textbf{Non-explosive birth-and-death matrices on E}}

A \textit{continuous-time birth-and-death process} is a regular jump HMC on $E$ such that its $q$-matrix $Q$ is tridiagonal, i.e., the only (possibly) nonzero entries of $Q$ are of the form $q_{x, x+1}, q_{x, x}$ and $q_{x, x-1}$. A stochastic process having $Q$ as the $q$-matrix of its transition function can only jump between neighboring vertices. We shall consider only matrices $Q$, or associated birth-and-death processes, such that the entries $\{q_{x, x+1}\}_{x\geq 0}$ and $\{q_{x, x-1}\}_{x\geq 1}$ are all strictly positive.

We are interested in finding sufficient conditions for non-explosiveness of $Q$.
Proposition $\ref{uniquenessprop}$ would then guarantee the uniqueness of a Markov semigroup $(P_1(t))_{t\geq 0}$ whose $q$-matrix is $Q$, as well as the existence of continuous-time birth-and-death processes with $(P_1(t))_{t\geq 0}$ as its transition function.

We call a matrix $Q$ of format $E\times E$ a \textit{birth-and-death matrix} if it has the form
\begin{gather}\label{bdmatrix}
Q = \begin{bmatrix}
    -\beta_0 & \beta_0 & 0 & 0 &  \\
    \delta_1 & -(\beta_1+\delta_1) & \beta_1 & 0 & \vdots \\
    0 & \delta_2 & -(\beta_2+\delta_2) & \beta_2 & \vdots \\
    0 & 0 & \delta_3 & -(\beta_3+\delta_3) &  \\
     & \cdots & \cdots &   & \ddots
\end{bmatrix}
\end{gather}
and where the \textit{birth rates} $\{\beta_n\}_{n\geq 0}$ and \textit{death rates} $\{\delta_n\}_{n\geq 1}$ are all strictly positive. Occasionally we write $\delta_0 \myeq 0$.
There is an important theorem which gives necessary and sufficient conditions for non-explosiveness of $Q$. It is called \textit{Reuter's criterion for birth-and-death generators}, see \cite[Ch. 8, Thm. 4.5]{B}. In order to state it, we need first to define the sequence of \textit{potential coefficients} $\{\pi_n\}_{n\geq 0}$ by
\begin{equation}\label{potentialcoeffs}
\pi_n =
    \begin{cases}
        1 & \textrm{if } n=0\\
        \frac{\beta_0\beta_1\cdots\beta_{n-1}}{\delta_1\delta_2\cdots\delta_n} & \textrm{if }n\geq 1.
    \end{cases}
\end{equation}

We only need the following obvious implication of Reuter's critetion: the birth-and-death matrix $Q$ is non-explosive if
\begin{equation*}
\sum_n{\frac{1}{\beta_n\pi_n}} = \infty.
\end{equation*}

From now on, we assume that $Q$ is a non-explosive birth-and-death matrix. From Proposition $\ref{uniquenessprop}$, there exists a unique Markov semigroup $(P_1(t))_{t\geq 0}$ with $q$-matrix $Q$. A sufficient condition that guarantees the Feller property of $(P_1(t))_{t\geq 0}$ is given in \cite[Ch. 1, Thm. 5.7]{A}. For the case when its $q$-matrix $Q$ is a birth-and-death matrix, it states that $(P_1(t))_{t\geq 0}$ is a Feller semigroup provided that for any $\lambda>0$, the equation $\mathbf{x}^T(\lambda I - Q) = 0$ has a unique solution with $\sum_i{|x_i|} < \infty$: the trivial solution $\mathbf{x} = (0, 0, \ldots)^T$. Without much difficulty, one can show that if a nontrivial solution existed, then there would also exist a nontrivial solution whose entries are all nonnegative (switch the signs to the solution $\boldx$, if necessary). According to \cite[Ch. 3, Thm. 2.3]{A}, the equation $\mathbf{x}^T(\lambda I - Q) = 0$, $\sum_i{|x_i|} < \infty$, $x_i \geq 0$, admits only the trivial solution provided that
\begin{equation*}
\sum_n{\frac{1}{\delta_n}} + \sum_n{\frac{1}{\delta_n\pi_n}\sum_{k=n+1}^{\infty}{\pi_k}} = \infty.
\end{equation*}

From the discussion above, the following proposition is obtained.

\begin{prop}\label{generalcriterion}
Assume that $Q$ is a birth-and-death matrix as in $(\ref{bdmatrix})$. If both sums
\begin{gather}
\sum_n{\frac{1}{\beta_n\pi_n}}\label{sum1}\\
\sum_n{\frac{1}{\delta_n}} + \sum_n{\frac{1}{\delta_n\pi_n}\sum_{k=n+1}^{\infty}{\pi_k}}\label{sum2}
\end{gather}
diverge, then $Q$ is non-explosive and the unique Markov semigroup $(P_1(t))_{t\geq 0}$ with $q$-matrix $Q$ is a Feller semigroup.
\end{prop}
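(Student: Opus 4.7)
The plan is to assemble the proof directly from the three results cited just before the statement: Reuter's criterion for non-explosion of a birth-and-death generator, the general existence/uniqueness result Proposition \ref{uniquenessprop}, and the two theorems from \cite{A} characterizing when the Feller property holds for birth-and-death semigroups. Nothing new has to be invented; the work lies in verifying that the two divergence hypotheses plug cleanly into these existing criteria.

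First, I would use Reuter's criterion in the form already recorded in the paragraph preceding the proposition: divergence of the sum $(\ref{sum1})$ is sufficient for $Q$ to be non-explosive. With non-explosiveness in hand, Proposition \ref{uniquenessprop} immediately yields a unique standard Markov semigroup $(P_1(t))_{t\geq 0}$ having $Q$ as its $q$-matrix, satisfying both Kolmogorov equations and serving as the transition function of some regular jump HMC. This disposes of the uniqueness and existence part of the statement and uses only the first of the two divergence hypotheses.

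Second, to obtain the Feller property I would invoke \cite[Ch.~1, Thm.~5.7]{A}, which for a tridiagonal $q$-matrix reduces Fellerness to the following spectral assertion: for every $\lambda > 0$, the only solution of the row-vector equation $\mathbf{x}^T(\lambda I - Q) = 0$ with $\sum_i |x_i| < \infty$ is $\mathbf{x} = 0$. A short sign-switching argument, already sketched above the proposition, shows that it suffices to rule out nontrivial summable solutions whose entries are all nonnegative; this reduction needs a brief justification but poses no real difficulty. Then \cite[Ch.~3, Thm.~2.3]{A} guarantees the non-existence of such a nontrivial nonnegative summable solution precisely when the series $(\ref{sum2})$ diverges, which is the remaining hypothesis of the proposition.

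The main obstacle, such as it is, is bookkeeping rather than mathematical: the two cited theorems from \cite{A} are stated in somewhat different languages (one in terms of the resolvent equation $\mathbf{x}^T(\lambda I - Q) = 0$, the other in terms of explicit series in the potential coefficients $\pi_n$), and one must check that their hypotheses match the expressions $(\ref{sum1})$ and $(\ref{sum2})$ exactly as displayed, including the initial $\sum_n \delta_n^{-1}$ term in the latter. Putting these pieces together — Reuter's criterion to produce $(P_1(t))_{t\geq 0}$, Proposition \ref{uniquenessprop} for uniqueness, and the Anderson pair to upgrade this semigroup to a Feller semigroup — completes the proof.
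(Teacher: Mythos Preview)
Your proposal is correct and follows exactly the paper's own approach: the proposition is stated in the paper as an immediate consequence of the discussion preceding it, which assembles Reuter's criterion (for non-explosion via divergence of $(\ref{sum1})$), Proposition~\ref{uniquenessprop} (for existence and uniqueness of the semigroup), and the pair of results \cite[Ch.~1, Thm.~5.7]{A} and \cite[Ch.~3, Thm.~2.3]{A} (for the Feller property via divergence of $(\ref{sum2})$). There is nothing to add.
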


\subsubsection{\textbf{Non-explosive matrices of transition rates on $\{E_N\}_{N>1}$}}

We describe a general recipe for constructing non-explosive $E_N\times E_N$ matrices of transition rates $Q^{(N)}$ and their associated Markov semigroups. Assume we are given the transition function $(P_1(t))_{t\geq 0}$ of a regular jump HMC on $E$ and the $q$-matrix $Q$ of $(P_1(t))_{t\geq 0}$. In particular, we know $Q$ is non-explosive. Moreover we assume that $Q$ satisfies the very special Assumption $\ref{operatorassumption}$ below; before stating it, we need some notation.

The $q$-matrix $Q = [q_{x, y}]_{x, y\in E}$ has an associated difference operator $\DD$ that acts on functions of a single variable $x\in E$ as follows:
\begin{gather}\label{defD}
(\DD f)(x) \myeq \sum_{y\in E}{q_{x, y}f(y)}, \hspace{.1in} x\in E.
\end{gather}

In general, of course, $\DD$ is not well-defined on the whole space of single-variable functions, but only on those functions for which the sum on the right-hand side of $(\ref{defD})$ converges. For $i = 1, 2, \ldots, N$, let $\DD^{[i]}$ be the operator that acts on functions $f(x_1, \ldots, x_N)$, $x_i\in E$, of $N$ variables as $\DD$ would act on $x_i$, by treating all other variables $\{x_j\}_{j\neq i}$ as constants.

As in most constructions involving a Doob $h$-transformation, the Vandermonde determinant plays a role:
\begin{eqnarray*}
&\Delta_N : E^N \rightarrow \R_+\\
&\Delta_N(x_1, \ldots, x_N) = \prod_{i<j}{(x_i - x_j)}.
\end{eqnarray*}
The function $\Delta_N$ is \textit{antisymmetric}, i.e., $\Delta_N(x_{\sigma(1)}, x_{\sigma(2)}, \ldots, x_{\sigma(N)}) = sgn(\sigma)\cdot \Delta_N(x_1, x_2, \ldots, x_N)$, for any permutation $\sigma\in S_N$. Moreover, it is positive on $E_N$, i.e., $\Delta_N(x_1, x_2, \ldots, x_N) > 0$ whenever $x_1 > x_2 > \ldots > x_N$.

In the rest of this section, we operate under the following assumption on $Q$.

\begin{assum}\label{operatorassumption}
\normalfont
\begin{itemize}
    \item The operator $\DD$, associated to $Q$ as shown in $(\ref{defD})$, is well-defined for all polynomial functions $f$ on $E$. Moreover $\DD$ stabilizes the polynomial functions of degree $\leq m$, for any $m\in\Zp$. In other words, if we let $m\in\Zp$ and $f(x) = x^m$, $x\in E$, then $(\DD f)(x)$ is a polynomial on $x$ of degree $\leq m$.
    \item The equality
    \begin{equation}\label{heigen}
    \mathcal{D}^{\free}\Delta_N = c_N\Delta_N,
    \end{equation}
    holds for some constant $c_N>0$, where $\DD^{\free}$ is the difference operator
    \begin{equation}\label{Dfree}
    \DD^{\free} := \DD^{[1]} + \ldots + \DD^{[N]}.
    \end{equation}
\end{itemize}
\end{assum}

For any $\boldx = (x_1 > \ldots > x_N), \boldy = (y_1 > \ldots > y_N) \in E_N$, define
\begin{equation}\label{Qndef}
q^{(N)}_{\boldx, \boldy} = Q^{(N)}(\boldx, \boldy) \myeq \frac{\Delta_N(\boldy)}{\Delta_N(\boldx)}\left( q_{x_1, y_1}\mathbf{1}_{\{x_i = y_i, i\neq 1\}} +  \ldots + q_{x_N, y_N}\mathbf{1}_{\{x_i = y_i, i\neq N\}} \right) - c_N\mathbf{1}_{\{\boldx = \boldy\}}.
\end{equation}
As usual, we denote $q^{(N)}_{\boldx} \myeq -q^{(N)}_{\boldx, \boldx}$. The matrix $Q^{(N)}$ of format $E_N\times E_N$ has an associated difference operator $\DD^{(N)}$ that acts on functions of a single variable $\boldx\in E_N$ as follows:
\begin{equation*}
    (\mathcal{D}^{(N)}f)(\boldx) \myeq \sum_{\boldy\in E_N}{q^{(N)}_{\boldx, \boldy}f(\boldy)}, \textrm{ for all }\boldx\in E_N.
\end{equation*}
Under Assumption $\ref{operatorassumption}$, $\DD^{(N)}$ is well-defined on polynomial functions on $N$ variables, and it stabilizes polynomial functions of degree $\leq m$ in each variable, for any $m\in\Zp$.
The reason why definition $(\ref{Qndef})$ is vital can be traced to the following lemma and its corollary.
\begin{lem}\label{Dnoperator}
\begin{equation*}
\mathcal{D}^{(N)} = \left(\frac{1}{\Delta_N}\circ \mathcal{D}^{\free}\circ \Delta_N\right) - c_N.
\end{equation*}
\end{lem}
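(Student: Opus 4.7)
The lemma is a pointwise identity which I would prove by directly expanding both sides at an arbitrary $\boldx = (x_1, \ldots, x_N) \in E_N$ and verifying they agree. There are no deep ideas here: the content of the statement is just the recognition that the rates $(\ref{Qndef})$ implement the Doob $h$-transform by $h = \Delta_N$, so the proof reduces to unpacking definitions.

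For the LHS, I would insert $(\ref{Qndef})$ into $(\mathcal{D}^{(N)} f)(\boldx) = \sum_{\boldy \in E_N} q^{(N)}_{\boldx, \boldy} f(\boldy)$. For each fixed $k$, the indicator $\mathbf{1}_{\{x_i = y_i,\, i \neq k\}}$ forces $\boldy = \boldy_k \myeq (x_1, \ldots, x_{k-1}, y_k, x_{k+1}, \ldots, x_N)$ for some $y_k \in E$ with $\boldy_k \in E_N$, so the double sum collapses to
\[ (\mathcal{D}^{(N)} f)(\boldx) = \frac{1}{\Delta_N(\boldx)} \sum_{k=1}^{N} \sum_{\substack{y_k \in E \\ \boldy_k \in E_N}} q_{x_k, y_k}\, \Delta_N(\boldy_k)\, f(\boldy_k) \;-\; c_N f(\boldx), \]
where the diagonal contributions $q_{x_k, x_k} f(\boldx)$ (which arise when $y_k = x_k$) have been absorbed into the inner sum.

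For the RHS, I would extend $f$ by zero off $E_N$ and invoke the definition $\mathcal{D}^{\free} = \sum_k \mathcal{D}^{[k]}$ to write
\[ \bigl(\mathcal{D}^{\free}(\Delta_N f)\bigr)(\boldx) = \sum_{k=1}^{N} \sum_{y_k \in E} q_{x_k, y_k}\, \Delta_N(\boldy_k)\, f(\boldy_k). \]
The terms with $\boldy_k \notin E_N$ all vanish: either $\boldy_k$ has two equal coordinates (so $\Delta_N(\boldy_k) = 0$), or else its coordinates are distinct but not strictly decreasing (so $f(\boldy_k) = 0$ by the extension convention). Hence the sum reduces to precisely the expression that appeared on the LHS, and dividing by $\Delta_N(\boldx)$ and subtracting $c_N f(\boldx)$ produces the identity. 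The only point requiring care — more a matter of bookkeeping than a real obstacle — is the convention for the domain of $\Delta_N f$: one must extend $f$ by zero off $E_N$, or equivalently restrict the free-particle jumps to those staying inside the Weyl chamber $E_N$. With that convention fixed, the proof is a one-line matching of the two sums.
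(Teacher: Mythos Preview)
Your proof is correct and takes essentially the same direct-computation approach as the paper. The only difference is cosmetic: you handle the domain issue by extending $f$ by zero off $E_N$, whereas the paper implicitly works with polynomial $f$ extended naturally to $E^N$; in the birth-and-death setting these agree because a single nearest-neighbor jump out of $E_N$ always lands on a tuple with a repeated coordinate, where $\Delta_N$ vanishes anyway.
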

\begin{proof}
The proof is a simple calculation:
\begin{gather*}
(\DD^{(N)}f)(\boldx) = \sum_{\boldy\in E_N}{q^{(N)}_{\boldx, \boldy}f(\boldy)}=\\
= \frac{1}{\Delta_N(\boldx)}\sum_{\boldy\in E_N}\left(\Delta_N(\boldy)(q_{x_1, y_1}\mathbf{1}_{\{x_i = y_i, i\neq 1\}} + \ldots + q_{x_N, y_N}\mathbf{1}_{\{x_i \neq y_i, i\neq N\}}) - \right.\\
\left. - c_N\Delta_N(\boldx)\mathbf{1}_{\{\boldx = \boldy\}} \right)f(\boldy)\\
= \frac{1}{\Delta_N(\boldx)}\left\{ \sum_{i=1}^N{\left( \sum_{y_i\in E}{q_{x_i, y_i}\Delta_N(\boldy)f(\boldy)}\big{|}_{\substack{y_j = x_j \\ j\neq i}} \right)} - c_N \Delta_N(\boldx)f(\boldx) \right\}=\\
= \frac{1}{\Delta_N(\boldx)}\sum_{i=1}^N{\DD^{[i]}(\Delta_Nf)(\boldx)} - c_N f(\boldx).
\end{gather*}
The result follows from the definition $(\ref{Dfree})$ of $\DD^{\free}$.
\end{proof}
\begin{cor}
$Q^{(N)}$ is a matrix of transition rates.
\end{cor}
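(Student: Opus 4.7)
The plan is to verify the three defining properties of a matrix of transition rates: (i) off-diagonal entries are nonnegative, (ii) the diagonal entries are finite (stability), and (iii) row sums vanish (conservativeness).

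For (i), the key observation is that for distinct $\boldx, \boldy\in E_N$, the indicator $\mathbf{1}_{\{x_i = y_i, \, i\neq k\}}$ can be nonzero for at most one index $k$, namely if $\boldy$ differs from $\boldx$ only in the $k$-th coordinate. In that case the definition $(\ref{Qndef})$ gives
\begin{equation*}
q^{(N)}_{\boldx, \boldy} = \frac{\Delta_N(\boldy)}{\Delta_N(\boldx)}\, q_{x_k, y_k},
\end{equation*}
while $q^{(N)}_{\boldx, \boldy} = 0$ if $\boldx$ and $\boldy$ differ in two or more coordinates. Since both $\boldx$ and $\boldy$ have strictly decreasing coordinates by definition of $E_N$, both $\Delta_N(\boldx)$ and $\Delta_N(\boldy)$ are strictly positive, and $q_{x_k, y_k}\geq 0$ because $Q$ is a $q$-matrix. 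This gives nonnegativity.

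For (ii), setting $\boldy = \boldx$ in $(\ref{Qndef})$ makes all $N$ indicators $\mathbf{1}_{\{x_i = y_i, \, i\neq k\}}$ equal to $1$, and $\Delta_N(\boldy)/\Delta_N(\boldx) = 1$, so
\begin{equation*}
q^{(N)}_{\boldx, \boldx} = \sum_{k=1}^N q_{x_k, x_k} - c_N = -\sum_{k=1}^N q_{x_k} - c_N,
\end{equation*}
which is finite since $Q$ itself is stable (so each $q_{x_k}<\infty$). Hence $q^{(N)}_{\boldx} = -q^{(N)}_{\boldx, \boldx}$ is a finite nonnegative number.

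For (iii), I would invoke Lemma $\ref{Dnoperator}$ applied to the constant function $f\equiv 1$. By that lemma,
\begin{equation*}
(\DD^{(N)} 1)(\boldx) \;=\; \frac{1}{\Delta_N(\boldx)}\,(\DD^{\free}\Delta_N)(\boldx) - c_N.
\end{equation*}
Using the eigenrelation $(\ref{heigen})$ from Assumption $\ref{operatorassumption}$, the right-hand side equals $c_N - c_N = 0$. On the other hand $(\DD^{(N)} 1)(\boldx) = \sum_{\boldy\in E_N} q^{(N)}_{\boldx, \boldy}$ by definition of $\DD^{(N)}$, so the row sums of $Q^{(N)}$ all vanish, proving conservativeness.

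There is no real obstacle here; the proof is essentially a two-line verification once Lemma $\ref{Dnoperator}$ and Assumption $\ref{operatorassumption}$ are in hand. The only point that deserves a brief remark is the bookkeeping in (i): one must note that terms in the defining sum in $(\ref{Qndef})$ with $\boldy$ differing from $\boldx$ in more than one coordinate give no contribution, and that terms where $\boldy\in E^N\setminus E_N$ do not occur in the matrix $Q^{(N)}$ at all (though they can appear implicitly in the sum $\DD^{\free}\Delta_N$ used in (iii), where they contribute via the antisymmetry of $\Delta_N$ — this is what makes Lemma $\ref{Dnoperator}$ work).
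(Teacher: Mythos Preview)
Your proof is correct and follows essentially the same approach as the paper: both verify off-diagonal nonnegativity and finiteness of the diagonal directly from the definition $(\ref{Qndef})$, and then obtain conservativeness by applying Lemma $\ref{Dnoperator}$ to the constant function $f\equiv 1$ together with the eigenrelation $(\ref{heigen})$. Your version simply spells out in more detail what the paper summarizes as ``clear'' and ``evident''; the one small point you assert but do not explicitly justify---that $q^{(N)}_{\boldx}\geq 0$---follows, as in the paper, from combining (i) with (iii).
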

\begin{proof}
From definition $(\ref{Qndef})$, it is clear that $q^{(N)}_{\boldx, \boldy}\geq 0$ whenever $\boldx \neq \boldy$. Moreover $q^{(N)}_{\boldx, \boldx}\neq -\infty$ is evident. The condition $q^{(N)}_{\boldx, \boldx}\leq 0$ will follow once we show the only remaining fact that $Q^{(N)}$ has zero row sums.

Applying Lemma $\ref{Dnoperator}$ to $f \equiv 1$ and the second item of Assumption $\ref{operatorassumption}$, we have
\begin{equation*}
\sum_{\boldy\in E_N}{q^{(N)}_{\mathbf{x}, \mathbf{y}}} = (\DD^{(N)}1)(\mathbf{x}) = \frac{1}{\Delta_N(\mathbf{x})}(\DD^{\free}\Delta_N)(\mathbf{x}) - c_N = 0.
\end{equation*}
\end{proof}

\begin{prop}\label{semigroupN}
The matrix $Q^{(N)} = [q^{(N)}_{\boldx, \boldy}]_{\boldx, \boldy\in E_N}$ of transition rates is non-explosive. The corresponding Markov semigroup $(P_N(t))_{t\geq 0}$ is given by
\begin{equation}\label{semiN}
P_N(t; \boldx, \boldy) = e^{-c_N t}\cdot\frac{\Delta_N(\boldy)}{\Delta_N(\boldx)}\det_{1\leq i, j\leq n}[ P_1(t; x_i, y_j) ], \hspace{.2in}\boldx, \boldy \in E_N.
\end{equation}
\end{prop}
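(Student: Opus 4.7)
The plan is to verify that the right-hand side of $(\ref{semiN})$, which I denote $\widetilde{P}_N(t;\boldx,\boldy)$, defines a standard Markov semigroup on $E_N$, identify its $q$-matrix with $Q^{(N)}$, and then invoke Proposition $\ref{uniquenessprop}$ to obtain both non-explosion of $Q^{(N)}$ and uniqueness. Nonnegativity $\widetilde{P}_N(t;\boldx,\boldy)\ge 0$ is the Karlin--McGregor theorem: since $\mathcal{D}$ is a birth-and-death generator on the linearly ordered state space $E$, the determinant $\det_{1\le i,j\le N}[P_1(t;x_i,y_j)]$ equals the probability that $N$ independent copies of the $1$-D chain starting at $x_1>\cdots>x_N$ occupy the sites $y_1>\cdots>y_N$ at time $t$ without colliding during $[0,t]$. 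For the row sums, antisymmetrization and the eigenfunction relation in Assumption $\ref{operatorassumption}$ give
\[
\sum_{\boldy\in E_N}\Delta_N(\boldy)\det[P_1(t;x_i,y_j)]=\sum_{\boldy\in E^N}\Delta_N(\boldy)\prod_{i=1}^N P_1(t;x_i,y_i)=\bigl(e^{t\mathcal{D}^{\free}}\Delta_N\bigr)(\boldx)=e^{c_N t}\Delta_N(\boldx),
\]
so that $\sum_{\boldy}\widetilde{P}_N(t;\boldx,\boldy)=1$; the passage to the exponential is justified because $\mathcal{D}^{\free}$ preserves polynomials of each total degree, so the ODE $\partial_t u_t=\mathcal{D}^{\free}u_t$ with $u_0=\Delta_N$ has the unique polynomial solution $u_t=e^{c_N t}\Delta_N$.

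The Chapman--Kolmogorov relation for $\widetilde{P}_N$ reduces to the Cauchy--Binet identity applied to $P_1(t+s)=P_1(t)P_1(s)$, yielding $\det[P_1(t+s;x_i,y_j)]=\sum_{\mathbf{z}\in E_N}\det[P_1(t;x_i,z_j)]\det[P_1(s;z_i,y_j)]$; the Vandermonde ratios $\Delta_N(\mathbf{z})/\Delta_N(\boldx)\cdot\Delta_N(\boldy)/\Delta_N(\mathbf{z})$ telescope and the exponential factors combine correctly. Standardness is inherited from continuity of $P_1(t;x,y)$ in $t$ and from $\widetilde{P}_N(0)=I$. For the $q$-matrix of $\widetilde{P}_N$, I differentiate the determinant at $t=0$ by multilinearity, $\tfrac{d}{dt}\det[M(t)]=\sum_i\det[\,\text{row }i\text{ replaced by }M'_i(t)\,]$. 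At $t=0$ the matrix is $[\delta_{x_i,y_j}]$; for $\boldx\neq\boldy$ the $N-1$ non-differentiated rows must form a partial permutation matrix on the common coordinates of $\boldx$ and $\boldy$, and the birth-and-death hypothesis $q_{x,y}=0$ for $|x-y|\ge 2$ then forces $\boldx$ and $\boldy$ to differ in exactly one position $k$ by a unit step. A short cofactor expansion gives $\tfrac{d}{dt}\big|_{t=0}\det[P_1(t;x_i,y_j)]=q_{x_k,y_k}$, recovering $(\ref{Qndef})$ after multiplication by the Vandermonde ratio $\Delta_N(\boldy)/\Delta_N(\boldx)$.

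Finally, for non-explosion: $\widetilde{P}_N$ is the Doob $h$-transform, with $h=\Delta_N$ and eigenvalue $c_N$, of the sub-Markov semigroup $\det[P_1(t;x_i,y_j)]$ on $E_N$, itself the minimal semigroup of the $N$-fold product process (non-explosive since $Q$ is non-explosive by Proposition $\ref{generalcriterion}$) killed on first collision of two coordinates. The $h$-transform preserves minimality in the Feller--Kolmogorov construction, so $\widetilde{P}_N(t)$ is the minimal Markov semigroup with $q$-matrix $Q^{(N)}$; since it is already stochastic, $Q^{(N)}$ is non-explosive in the sense of Definition $\ref{defn:nonexplosive}$, and Proposition $\ref{uniquenessprop}$ identifies $\widetilde{P}_N(t)$ with the unique $P_N(t)$. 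The main technical hurdle will be the $q$-matrix identification, where one must carefully rule out spurious multi-coordinate-jump contributions to the derivative of the Karlin--McGregor determinant at $t=0$; the birth-and-death structure of $\mathcal{D}$ is what makes this clean.
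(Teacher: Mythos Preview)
Your approach is correct and reaches the same conclusion as the paper, but the route is genuinely different. The paper works directly with the minimal (Feller--Kolmogorov) construction $\overline P(t)=\sum_n P^{[n]}(t)$ built from $Q^{(N)}$: it proves by induction on the jump count $n$ that $P^{[n]}(t;\boldx,\boldy)=e^{-c_Nt}\tfrac{\Delta_N(\boldy)}{\Delta_N(\boldx)}\widetilde P^{[n]}(t;\boldx,\boldy)$, where $\widetilde P^{[n]}$ is the $n$-jump probability for $N$ independent $P_1$-particles conditioned not to collide; summing and applying Karlin--McGregor gives the determinantal formula, and then the row-sum identity (proved carefully in Appendix~B, essentially your $e^{t\mathcal D^{\free}}\Delta_N=e^{c_Nt}\Delta_N$) yields stochasticity, hence non-explosion by Definition~\ref{defn:nonexplosive}. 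You instead start from the formula, verify it is a standard Markov semigroup (Karlin--McGregor for nonnegativity, Cauchy--Binet for Chapman--Kolmogorov, the eigenfunction relation for row sums), differentiate at $t=0$ to identify the $q$-matrix with $Q^{(N)}$, and then invoke the general fact that Doob $h$-transforms preserve minimality to conclude your semigroup is the minimal one for $Q^{(N)}$, whence non-explosion.

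Both approaches need the row-sum computation; yours packages it cleanly but skips the work the paper does in Appendix~B (showing $\sum_y P_1(t;x,y)y^k$ is a polynomial in $x$ satisfying the expected ODE, so that the action of $P_1(t)$ on polynomials really is $e^{t\mathcal D}$). Your $q$-matrix identification genuinely uses the birth-and-death hypothesis, as you correctly flag: without nearest-neighbor jumps, the linear-in-$t$ term of the determinant can pick up contributions from non-identity permutations when $\boldy$ differs from $\boldx$ in several coordinates, and these would not match \eqref{Qndef}. The paper's jump-count induction sidesteps this entirely. The one place your argument is thinner than the paper's is the clause ``the $h$-transform preserves minimality'': this is true, but its proof is exactly the induction on $n$ that the paper writes out, so you are tacitly relying on what the paper makes explicit. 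For a self-contained write-up you should justify that step; otherwise the logic is sound and the more conceptual framing (killed product process $\to$ $h$-transform) is a nice way to see why the construction works.
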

\begin{proof}
For any $n\in\Zp$, let $\widetilde{P}_N^{[n]}(t; \mathbf{x}, \mathbf{y})$ be the probability that a process with $N$ independent particles, each moving as a regular jump HMC with semigroup $(P_1(t))_{t\geq 0}$, begin at positions $x_1 > \ldots > x_N$ at time $0$, finish at positions $y_1 > \ldots > y_N$ at time $t$ after $n$ steps, given that the particles are conditioned on not intersecting during $[0, t]$. We also let
\begin{equation*}
\widetilde{P}_N(t; \mathbf{x}, \mathbf{y}) = \sum_{n=0}^{\infty}{\widetilde{P}_N^{[n]}(t; \mathbf{x}, \mathbf{y})}, \ t\geq 0, \ \mathbf{x}, \mathbf{y}\in E_N
\end{equation*}
be the probability that the conditional process above begins at time $0$ with particles at $x_1 > x_2 > \ldots > x_N$, makes a \textit{finite number of jumps in $[0, t]$} and ends at time $t$ with particles at $y_1 > y_2 > \ldots > y_N$. Since each of the $N$ particles in the process moves as a regular jump HMC with the non-explosive $q$-matrix $Q$, then $\widetilde{P}_N(t; \mathbf{x}, \mathbf{y})$ is the probability that the conditional process above begins with particles $x_1 > x_2 > \ldots > x_N$ at time $0$ and finishes with particles $y_1 > y_2 > \ldots > y_N$ at time $t$ (we have removed the condition that the number of steps during $[0, t]$ is finite, because that follows from the non-explosiveness of $Q$).

For each $n\in\Zp$, $t\geq 0$, define the matrices
\begin{equation}\label{recurrencekol1}
P^{[n]}(t; \boldx, \boldy) =
    \begin{cases}
        e^{-q^{(N)}_{\boldx}t}\cdot\mathbf{1}_{\{\boldx = \boldy\}} & \textrm{if } n=0,\\
        \int_0^t{e^{-q^{(N)}_{\boldx}s}\cdot\sum_{\substack{\mathbf{z}\in E_N \\ \mathbf{z}\neq \boldx}}{q^{(N)}_{\boldx, \mathbf{z}}P^{[n-1]}(t-s; \mathbf{z}, \boldy)}ds} & \textrm{if } n\geq 1,
    \end{cases}
\end{equation}
and define also the entry-wise sum of matrices
\begin{equation*}
\overline{P}(t) = \sum_{n=0}^{\infty}{P^{[n]}(t)}, \hspace{.2in}t\geq 0.
\end{equation*}
To prove the non-explosion of $Q^{(N)}$, according to Definition $\ref{defn:nonexplosive}$, we need to show that all matrices $\overline{P}(t)$, $t\geq 0$, are stochastic, i.e.,
\begin{equation}\label{conditioncheck1}
\sum_{\boldy\in E_N}{\overline{P}(t; \boldx, \boldy)} = 1, \textrm{ for any }\boldx\in E_N,
\end{equation}
and for the last statement of the proposition, we need
\begin{equation}\label{conditioncheck2}
\overline{P}(t; \boldx, \boldy) = e^{-c_Nt}\frac{\Delta_N(\boldy)}{\Delta_N(\boldx)}\det_{i, j}[P_1(t; x_i, y_j)].
\end{equation}
It is actually easier to prove $(\ref{conditioncheck2})$ first. We claim
\begin{equation}\label{PNformula}
    \overline{P}_N(t; \mathbf{x}, \mathbf{y}) = e^{-c_Nt}\frac{\Delta_N(\mathbf{y})}{\Delta_N(\mathbf{x})}\widetilde{P}_N(t; \mathbf{x}, \mathbf{y}).
\end{equation}
The claim above, together with the formula of Karlin and McGregor $\widetilde{P}_N(t; \mathbf{x}, \mathbf{y}) = \det_{i, j}[ P_1(t; x_i, y_j) ]$, see \cite{KM}, proves $(\ref{conditioncheck2})$, so we concentrate on the claim $(\ref{PNformula})$. We shall prove in fact the stronger statement
\begin{equation}\label{PNnformula}
    P_N^{[n]}(t; \mathbf{x}, \mathbf{y}) = e^{-c_Nt}\frac{\Delta_N(\mathbf{y})}{\Delta_N(\mathbf{x})}\widetilde{P}_N^{[n]}(t; \mathbf{x}, \mathbf{y}), \hspace{.2in} n\geq 0.
\end{equation}
The proof is by induction on $n$. For the base case $n = 0$, observe that $\widetilde{P}_N^{[0]}(t; \boldx, \boldy)$ is zero if $\boldx\neq\boldy$ and if $\boldx = \boldy$, then $\widetilde{P}_N^{[0]}(t; \boldx, \boldy)$ is the probability that none of the $N$ independent particles at $x_1 > x_2 > \ldots > x_N$ moves during the interval $[0, t]$. Therefore
\begin{equation*}
\widetilde{P}_N^{[0]}(t; \boldx, \boldy) = \mathbf{1}_{\{\boldx = \boldy\}}\prod_{i=1}^N{e^{-tq_{x_i}}},
\end{equation*}
which together with the formula $(\ref{recurrencekol1})$ for $P^{[0]}(t; \boldx, \boldy)$ and $q^{(N)}_{\boldx} = -q_{\boldx, \boldx} = q_{x_1} + \ldots + q_{x_N} + c_N$, proves $(\ref{PNnformula})$ for $n = 0$.

For the induction step, assume that $(\ref{PNnformula})$ holds for some $n-1\in\Zp$ and we aim to prove it for $n$. A typical one-step analysis on the process of $N$ independent particles shows
\begin{equation}
\widetilde{P}_N^{[n]}(t; \mathbf{x}, \mathbf{y}) = \int_0^t{\prod_{i=1}^N{e^{-sq_{x_i}}}\sum_{\substack{\mathbf{z}\in E_N \\\mathbf{z}\neq\mathbf{x}}}{S^{(N)}_{\mathbf{x}, \mathbf{z}}\widetilde{P}_N^{[n-1]}(t-s; \mathbf{z}, \mathbf{y})}ds},
\end{equation}
where $S^{(N)}_{\mathbf{x}, \mathbf{z}} \myeq \mathbf{1}_{\{x_j = z_j, j\neq 1\}}q_{x_1, z_1} + \ldots + \mathbf{1}_{\{x_j = z_j, j\neq N\}}q_{x_N, z_N}$. Together with $(\ref{recurrencekol1})$ and the induction hypothesis, one easily proves $(\ref{PNnformula})$ for $n$, thus completing the induction step.

The remaining step is to prove $(\ref{conditioncheck1})$ which, thanks to the formula $(\ref{conditioncheck2})$ we already proved, is equivalent to
\begin{equation}\label{toprove1}
\sum_{\substack{y_1 > y_2 > \ldots > y_N\geq 0 \\ y_1, \ldots, y_N \in E}}{\det_{i, j}[P_1(t; x_i, y_j)]\Delta_N(\mathbf{y})} = e^{c_Nt}\Delta_N(\mathbf{x}), \textrm{ for all }\mathbf{x}\in E_N, t\geq 0.
\end{equation}

From the antisymmetry property of $\Delta_N$, the left-hand side of $(\ref{toprove1})$ can be written as
\begin{gather*}
\sum_{\substack{y_1 > y_2 > \ldots > y_N\geq 0 \\ y_1, \ldots, y_N\in E}}{\left(\sum_{\sigma\in S_N}{sgn(\sigma)\prod_{i=1}^N{P_1(t; x_i, y_{\sigma(i)})}}\right)\Delta_N(\mathbf{y})}\\
= \sum_{\substack{y_1 > y_2 > \ldots > y_N\geq 0 \\ y_1, \ldots, y_N\in E}}{\sum_{\sigma\in S_N}{\prod_{i=1}^N{P_1(t; x_i, y_{\sigma(i)})}\Delta_N(y_{\sigma(1)}, \ldots, y_{\sigma(N)})}}\\
= \sum_{y_1, \ldots, y_N\in E}{\prod_{i=1}^N{P_1(t; x_i, y_i)}\cdot \Delta_N(\mathbf{y})}.
\end{gather*}
We point out that in the last sum we added some terms indexed by tuples $\boldy = (y_1, \ldots, y_N)$ with $y_i = y_j$ for some $i\neq j$. No problem arises because $\Delta_N(\boldy)$ vanishes whenever $\boldy$ has two equal components.
The statement $(\ref{toprove1})$ that we need to prove becomes
\begin{equation}\label{toprove2}
\sum_{y_1, \ldots, y_N\in E}{\left( \prod_{i=1}^N{P_1(t; x_i, y_i)} \right) \Delta_N(\mathbf{y})} = e^{c_Nt}\Delta_N(\mathbf{x}).
\end{equation}
The identity $(\ref{toprove2})$ is a consequence of Assumption $\ref{operatorassumption}$. The formal derivation is somewhat tedious and left to Appendix $\ref{appendixB}$.
\end{proof}

As a consequence of Propositions $\ref{uniquenessprop}$ and $\ref{semigroupN}$, we obtain the following statement.

\begin{prop}\label{generalcriterionN}
If $Q$ is a non-explosive matrix of transition rates on $E$ satisfying Assumption $\ref{operatorassumption}$ and $Q^{(N)}$ is the $E_N\times E_N$ matrix defined in $(\ref{Qndef})$, then $Q^{(N)}$ is a non-explosive matrix of transition rates on $E_N$. There exists a unique Markov semigroup $(P_N(t))_{t\geq 0}$ with $q$-matrix $Q^{(N)}$, and is given by
\begin{equation*}
P_N(t; \boldx, \boldy) = e^{-c_N t}\cdot\frac{\Delta_N(\boldy)}{\Delta_N(\boldx)}\det_{1\leq i, j\leq n}[ P_1(t; x_i, y_j) ], \hspace{.2in}t\geq 0; \ \mathbf{x}, \mathbf{y}\in E_N,
\end{equation*}
where $(P_1(t))_{t\geq 0}$ is the unique Markov semigroup with $q$-matrix $Q$.
\end{prop}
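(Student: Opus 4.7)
The statement is essentially a consolidation of Propositions \ref{uniquenessprop} and \ref{semigroupN}, so my plan is to simply assemble these two ingredients rather than run any new computation.

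The first step is to invoke Proposition \ref{semigroupN} directly: under Assumption \ref{operatorassumption}, it already asserts both that the matrix $Q^{(N)}$ defined by $(\ref{Qndef})$ is non-explosive and that the associated Markov semigroup is given by the Karlin--McGregor type formula
\begin{equation*}
P_N(t; \boldx, \boldy) = e^{-c_N t}\cdot\frac{\Delta_N(\boldy)}{\Delta_N(\boldx)}\det_{1\leq i, j\leq N}[P_1(t; x_i, y_j)].
\end{equation*}
This matches the formula in the proposition, so the only remaining task is the uniqueness clause.

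For uniqueness, I would appeal to Proposition \ref{uniquenessprop}, which states that for any non-explosive matrix of transition rates on a countable state space there is a \emph{unique} standard Markov semigroup with the given $q$-matrix. Since non-explosiveness of $Q^{(N)}$ has already been established in the previous paragraph, Proposition \ref{uniquenessprop} applies verbatim with $\X = E_N$ and identifies $(P_N(t))_{t\geq 0}$ as the unique such semigroup.

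There is essentially no obstacle here: the construction of $(P_N(t))_{t \geq 0}$, the verification that $Q^{(N)}$ is conservative (done in the corollary following Lemma \ref{Dnoperator}), and the proof of non-explosion via the identity $(\ref{toprove2})$ have all been carried out in the proof of Proposition \ref{semigroupN}, while the general uniqueness statement is supplied by Proposition \ref{uniquenessprop}. The only thing to be careful about is to note explicitly that Assumption \ref{operatorassumption} is what allows the application of Proposition \ref{semigroupN} in the first place (through the eigenvalue relation $(\ref{heigen})$), and that the non-explosion of $Q$ itself is what guarantees that the Karlin--McGregor determinantal formula for $\widetilde{P}_N(t; \boldx, \boldy)$ captures the full transition probability rather than only the contribution from finite-jump paths. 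Once these two observations are recorded, the proof is complete.
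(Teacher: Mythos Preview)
Your proposal is correct and matches the paper's own treatment exactly: the paper states this proposition as an immediate consequence of Propositions \ref{uniquenessprop} and \ref{semigroupN}, with no additional argument given. Your identification of what each ingredient supplies (non-explosion and the explicit formula from Proposition \ref{semigroupN}, uniqueness from Proposition \ref{uniquenessprop}) is precisely the intended assembly.
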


\subsection{Specialization to the BC branching graph}\label{specialization}

\subsubsection{\textbf{Non-explosive birth-and-death matrices on $\GTp_1 = \Zp$}}

Our construction depends on two complex parameters $u, u'\in\C$.

\begin{df}\label{Hnod}
We define $\mathcal{H}^{\circ}$ as the set of pairs $(u, u')\in\C^2$ satisfying
\begin{eqnarray*}
(u - k)(u' - k) &>& 0 \textrm{, for any }k \in \Zp\\
(u + k + 2\epsilon)(u' + k + 2\epsilon) &>& 0 \textrm{, for any }k\in\N\\
u + u' + b &>& -1.
\end{eqnarray*}
Observe that if the pair $(u, u')$ satisfies either of the first two conditions above, then $u + u'\in\R$ and so the third condition makes sense.
\end{df}
\begin{rem}
Later in Section $\ref{zmeasuressubsection}$, we define the space $\mathcal{H}$ of pairs $(z, z')$ for which $z$-measures are defined. It turns out that $\mathcal{H}\subset\mathcal{H}^{\circ}$, so the construction to come in this section is slightly more general than needed for the main result of the paper.
\end{rem}

For any pair $(u, u')\in\mathcal{H}^{\circ}$, define the birth and death rates $\{\beta(x)\}_{x\in\Zp}$, $\{\delta(x)\}_{x\in\Zp}$, by
\begin{eqnarray}
\beta(x) &=& \frac{(x + 2\epsilon)(x + a + 1)(x - u)(x - u')}{2(x + \epsilon)(2x + 2\epsilon + 1)}, \hspace{.1in}x\in\Zp\label{Blabel}\\
\delta(x) &=& \frac{x(x + b)(x + u + 2\epsilon)(x + u' + 2\epsilon)}{2(x + \epsilon)(2x + 2\epsilon - 1)}, \hspace{.1in} x\in\N, \hspace{.2in}\delta(0) = 0.\label{Dlabel}
\end{eqnarray}
From the definition of the set $\mathcal{H}^{\circ}$, the rates $\{\beta(x)\}_{x\geq 0}$ and $\{\delta(x)\}_{x\geq 1}$ are all strictly positive. Define the birth-and-death matrix $R = [R(x, y)]_{x, y\in\Zp}$ by
\begin{gather}\label{R1expression}
R = \begin{bmatrix}
    -\beta(0) & \beta(0) & 0 & 0 &  \\
    \delta(1) & -\beta(1)-\delta(1) & \beta(1) & 0 & \vdots \\
    0 & \delta(2) & -\beta(2)-\delta(2) & \beta(2) & \vdots \\
    0 & 0 & \delta(3) & -\beta(3)-\delta(3) &  \\
     & \cdots & \cdots &   & \ddots
\end{bmatrix}.
\end{gather}

\begin{rem}
Below we define more general matrices of transition rates $R^{(N)}$ in $\GTp_N$, for $N>1$, reason why we denote $R$ by $R^{(1)}$ sometimes. If we want to be explicit about the pair $(u, u')\in\mathcal{H}^{\circ}$ used in the definition of $R$, we write instead $R_{u, u'}$.
\end{rem}

In order to apply Proposition $\ref{generalcriterion}$ in our case, we need to verify that the two sums shown there diverge. Clearly $(\ref{Blabel}), (\ref{Dlabel})$ give the following expression for the potential coefficients $\pi(n)$ in terms of Gamma functions:
\begin{equation*}
\pi(n) = \frac{\beta(0)\cdots\beta(n-1)}{\delta(1)\cdots\delta(n)} = \frac{const\cdot (n+\epsilon)\cdot\Gamma(n + 2\epsilon)\Gamma(n + a + 1)\Gamma(n - u)\Gamma(n - u')}{\Gamma(n + 1)\Gamma(n + b + 1)\Gamma(n + u + 2\epsilon + 1)\Gamma(n + u' + 2\epsilon + 1)},
\end{equation*}
where $const$ is a constant independent of $n$. Then
\begin{equation}\label{piasymptotics}
\pi(n) \sim n^{-2(u + u' + b) - 3}, \hspace{.2in} n\longrightarrow\infty.
\end{equation}
From $(\ref{Blabel})$, we have $\beta(n) \sim n^2$ as $n\rightarrow\infty$; thus $\frac{1}{\beta(n)\pi(n)} \sim n^{2(u+u'+b)+1}$. By the third condition of Definition $\ref{Hnod}$, $u+u'+b > -1$, therefore
\begin{equation*}
\sum_{n=0}^{\infty}{\frac{1}{\beta(n)\pi(n)}} = \infty.
\end{equation*}
From $(\ref{piasymptotics})$, it follows that the sum $\sum_k{\pi(k)}$ converges and moreover $\sum_{k = n+1}^{\infty}{\pi(k)} \geq c n^{-2(u+u'+b)-2}$, for some constant $c>0$. Also, from the definition $(\ref{Dlabel})$, $\frac{1}{\delta(n)\pi(n)}\sim n^{2(u+u'+b)+1}$, $n\rightarrow\infty$, so the term $\frac{1}{\delta(n)\pi(n)}\sum_{k=n+1}^{\infty}{\pi(k)}$ is of order $n^{-1}$, thus proving
\begin{equation*}
\sum_n{\frac{1}{\delta(n)\pi(n)}\sum_{k=n+1}^{\infty}{\pi(k)}} = \infty.
\end{equation*}

We have verified the two conditions on the birth-and-death matrix $R$ from Proposition $\ref{generalcriterion}$ and therefore we have proven the following.
\begin{prop}\label{fellerspecial1}
The birth-and-death matrix $R$ defined by $(\ref{Blabel})$, $(\ref{Dlabel})$ and $(\ref{R1expression})$ is non-explosive. It is the $q$-matrix of a unique Markov semigroup $(P_1(t))_{t\geq 0}$, which is a Feller semigroup.
\end{prop}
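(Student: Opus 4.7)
The plan is to simply verify the hypotheses of the general Proposition \ref{generalcriterion}: that both divergence conditions (\ref{sum1}) and (\ref{sum2}) hold for the specific birth and death rates $\beta(x),\delta(x)$ defined in (\ref{Blabel})--(\ref{Dlabel}). Once this is done, Proposition \ref{generalcriterion} immediately yields non-explosiveness, uniqueness of a standard Markov semigroup $(P_1(t))_{t\geq 0}$ with $q$-matrix $R$, and its Feller property, which is exactly the claim.

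First, I would compute the asymptotics of the potential coefficients $\pi(n) = \beta(0)\cdots\beta(n-1)/(\delta(1)\cdots\delta(n))$. Substituting (\ref{Blabel}) and (\ref{Dlabel}), almost every factor telescopes into a ratio of Gamma functions, giving
\[
\pi(n) = \frac{\mathrm{const}\cdot (n+\epsilon)\,\Gamma(n+2\epsilon)\Gamma(n+a+1)\Gamma(n-u)\Gamma(n-u')}{\Gamma(n+1)\Gamma(n+b+1)\Gamma(n+u+2\epsilon+1)\Gamma(n+u'+2\epsilon+1)}.
\]
The standard Stirling-type estimate $\Gamma(n+\alpha)/\Gamma(n+\beta)\sim n^{\alpha-\beta}$ then yields $\pi(n)\sim n^{-2(u+u'+b)-3}$ as $n\to\infty$.

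Next, since $\beta(n)\sim n^2$ from (\ref{Blabel}), I get $1/(\beta(n)\pi(n))\sim n^{2(u+u'+b)+1}$. The third condition of Definition \ref{Hnod} says $u+u'+b>-1$, so this exponent is strictly larger than $-1$ and in particular $\sum_n 1/(\beta(n)\pi(n))=\infty$, verifying (\ref{sum1}). For (\ref{sum2}), I note that the same condition $u+u'+b>-1$ makes the series $\sum_k\pi(k)$ convergent with tail $\sum_{k>n}\pi(k)\asymp n^{-2(u+u'+b)-2}$. Combining with $1/(\delta(n)\pi(n))\sim n^{2(u+u'+b)+1}$ (from $\delta(n)\sim n^2$), the summand $\frac{1}{\delta(n)\pi(n)}\sum_{k>n}\pi(k)$ is of order $n^{-1}$, so its series diverges and (\ref{sum2}) follows.

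Strictly speaking, there is no real obstacle here: the argument is a routine asymptotic analysis, and the three conditions of Definition \ref{Hnod} were designed precisely so that (i) $\beta(x),\delta(x)$ are strictly positive on their domains, and (ii) the exponent appearing in $\pi(n)$ lands in the window making both Reuter-type sums divergent. The only subtle point worth remarking on is that the strict inequality $u+u'+b>-1$ is used twice in opposite directions: it both forces $\sum\pi(k)$ to converge (needed so the tails in (\ref{sum2}) are finite) and makes the combined summand $n^{-1}$ just barely divergent. If one weakened this to equality, (\ref{sum2}) would still pass but (\ref{sum1}) would become borderline, so the hypothesis is essentially sharp for this method.
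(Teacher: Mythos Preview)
Your proof is correct and follows essentially the same approach as the paper: compute the Gamma-function expression for $\pi(n)$, derive the asymptotic $\pi(n)\sim n^{-2(u+u'+b)-3}$, and use $\beta(n),\delta(n)\sim n^2$ together with $u+u'+b>-1$ to verify the two divergence conditions of Proposition~\ref{generalcriterion}. Your closing remark on sharpness is slightly off (at equality $u+u'+b=-1$ one gets $1/(\beta(n)\pi(n))\sim n^{-1}$, which still diverges, and the inner tails in (\ref{sum2}) become infinite, so that sum diverges trivially), but this does not affect the proof itself.
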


\subsubsection{\textbf{Non-explosive matrices of transition rates on $\GTp_N$}}\label{generalN}

Let $N > 1$ be an integer, $(u, u')\in\mathcal{H}^{\circ}$, and let $R = R_{u, u'}$ be the birth-and-death matrix defined above. We aim to define a matrix $[R^{(N)}(\lambda, \nu)]_{\lambda, \nu\in\GTp_N}$ of transition rates with rows and columns parametrized by $\GTp_N$. We shall use the general method of Section $\ref{generalsetup}$. In order to apply that general construction, we use  $E = \Zp^{\epsilon} = \{\epsilon^2, (1 + \epsilon)^2, (2 + \epsilon)^2, \ldots\}$ and $E_N = \Omega_N^{\epsilon}$, $N > 1$. Therefore we treat $R$ as a matrix with rows and columns parametrized by $\Zp^{\epsilon}$, even though we will use the notation $R(x, y)$, with $x, y\in\Zp$, for its entries. Similarly, we treat $R^{(N)}$ as a matrix with rows and columns parametrized by $\Omega_N^{\epsilon}$ even though we will use the notation $R^{(N)}(\lambda, \mu)$, with $\lambda, \mu\in\GTp_N$, for its entries.

The expression $(\ref{Qndef})$ specializes to
\begin{eqnarray}\label{defnRN}
R^{(N)}(\lambda, \nu) &=& \frac{\prod_{i < j}{(\hatn_i - \hatn_j)}}{\prod_{i < j}{(\hatl_i - \hatl_j)}}\left(R(l_1, n_1)\mathbf{1}_{\{l_i = n_i, i\neq 1\}} + R(l_2, n_2)\mathbf{1}_{\{l_i = n_i, i\neq 2\}}\right.\nonumber\\
&&\left. +\ldots + R(l_N, n_N)\mathbf{1}_{\{l_i = n_i, i\neq N\}}\right) - c_N\mathbf{1}_{\{\lambda = \nu\}}, \hspace{.2in}\lambda, \nu\in\GTp_N,
\end{eqnarray}
where
\begin{equation}\label{dN}
c_N = \frac{N(N - 1)(N - 2)}{3} - \frac{N(N - 1)}{2}(u + u' + b)
\end{equation}
and $(l, n)$, $(\hatl, \hatn)$ are associated to $(\lambda, \nu)$ as explained at the end of Section $\ref{posGT}$.

In order to obtain the benefits from Proposition $\ref{generalcriterionN}$, we need to verify the two conditions of Assumption $\ref{operatorassumption}$. For the first condition, let us look at the discrete difference operator $\DD$ associated to $R$. Naturally, $\mathcal{D}$ is defined as an operator on functions on $\Zp^{\epsilon} = \{(x + \epsilon)^2 : x\in\Zp\}$, given by
\begin{equation}\label{diffoperatorD}
(\mathcal{D}f)(\widehat{x}) = \sum_{y\in\Zp}{R(x, y)f(\widehat{y})} = \beta(x)(f(\widehat{x + 1}) - f(\widehat{x})) + \delta(x)(f(\widehat{x - 1}) - f(\widehat{x})), 
\end{equation}
where $\widehat{x} = (x+\epsilon)^2$, $x\in\Zp$.

\begin{lem}\label{operatorvalid}
The operator $\DD$ is well-defined for polynomial functions on $\Zp^{\epsilon}$ and it stabilizes them, i.e., if $f$ is a polynomial, then $(\mathcal{D}f)(\widehat{x})$ is a polynomial on $\widehat{x}$. Moreover, for any $m\in\Zp$, we have
\begin{equation*}
\mathcal{D}[(x + \epsilon)^{2m}] = a_m(x + \epsilon)^{2m} + \textrm{ lower degree terms in } (x + \epsilon)^2,
\end{equation*}
where
\begin{equation*}
a_m := m(m - 1) - m(u + u' + b).
\end{equation*}
Consequently, $\DD$ stabilizes the polynomial functions on $\Zp^{\epsilon}$ of degree $\leq m$, for any $m\in\Zp$.
\end{lem}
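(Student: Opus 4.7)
My approach is to change to the variable $y := x+\epsilon$, so that $\widehat x = y^2$, and rewrite
\[
\beta(x) = \frac{P_+(y)}{2y(2y+1)}, \qquad \delta(x) = \frac{P_-(y)}{2y(2y-1)},
\]
where $P_+(y) := (y+\epsilon)(y+p)(y-\epsilon-u)(y-\epsilon-u')$ with $p := (a-b+1)/2$, and a direct expansion shows $P_-(y) = P_+(-y)$. By linearity, it suffices to handle $f(\widehat x) = y^{2m}$. Setting $G(y) := (y+1)^{2m}-y^{2m}$ and $H(y) := (y-1)^{2m}-y^{2m}$ (note $H(y) = G(-y)$), the action of $\DD$ becomes
\[
(\DD f)(\widehat x) \;=\; \frac{N(y)}{2y(2y-1)(2y+1)}, \qquad N(y) := P_+(y)G(y)(2y-1) + P_-(y)H(y)(2y+1).
\]

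The plan now rests on two observations. First, $N(y)$ vanishes at each pole of the denominator: at $y = 1/2$ the first summand has the factor $(2y-1)$ while $H(1/2) = (-1/2)^{2m}-(1/2)^{2m} = 0$ kills the second; at $y = -1/2$ the situation is symmetric, using $G(-1/2) = 0$; and at $y = 0$ we have $G(0) = H(0) = 1$ together with $P_+(0) = \epsilon\, p\,(\epsilon+u)(\epsilon+u') = P_-(0)$, giving $N(0) = -P_+(0) + P_-(0) = 0$. This forces $(\DD f)(\widehat x)$ to be polynomial in $y$. Second, the symmetries $P_-(y) = P_+(-y)$ and $H(y) = G(-y)$ imply by direct substitution that $N(-y) = -N(y)$, so $N$ is odd in $y$; since $2y(4y^2-1)$ is also odd in $y$, the quotient is even in $y$, hence a genuine polynomial in $\widehat x = y^2$.

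For the degree and leading coefficient, I would track the top terms. The $y^{2m+4}$ contributions of the two summands of $N$ both equal $\pm 4m$ and cancel, so $\deg N \le 2m+3$. Using $P_\pm(y) = y^4 \mp (u+u'+b) y^3 + O(y^2)$, where the $y^3$ coefficients rest on the identity $p - \epsilon = -b$, a short computation gives the $y^{2m+3}$ coefficient of $N$ as $8m\bigl[(m-1) - (u+u'+b)\bigr] = 8 a_m$; dividing by the leading $8y^3$ of the denominator yields the top coefficient $a_m$ in front of $y^{2m} = \widehat x^{\,m}$, as required. The stabilization of polynomials of degree $\le m$ in $\widehat x$ is then immediate by linearity.

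The main obstacle is spotting the symmetry $P_-(y) = P_+(-y)$: once visible, the parity argument reduces what could be a delicate partial-fraction check to three evaluations, and the sub-leading extraction is the only place where one must actually expand products of polynomials. A minor subtlety is that $\delta(0) = 0$ kills the would-be term $f(\widehat{-1})$ in $(\DD f)(\widehat 0)$ automatically, so the boundary at $x = 0$ requires no separate treatment.
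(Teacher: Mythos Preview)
Your proof is correct and carries out precisely the direct calculation that the paper omits (the paper's own ``proof'' is simply ``The proof is a simple calculation and is left to the reader''). Your substitution $y=x+\epsilon$ together with the parity observation $P_-(y)=P_+(-y)$ is a clean way to organize the computation; the vanishing of $N$ at $y=0,\pm\tfrac12$ and the oddness of $N$ give exactly the polynomial-in-$\widehat{x}$ conclusion, and your extraction of the $y^{2m+3}$ coefficient $8a_m$ is accurate.
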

\begin{proof}
The proof is a simple calculation and is left to the reader. 
\end{proof}

To verify the second condition of Assumption $\ref{operatorassumption}$, notice that Lemma $\ref{operatorvalid}$ gives 
\begin{equation*}
\sum_{i=1}^N{\DD^{[i]}}\Delta_N(\mathbf{x}) = \left(\sum_{m=1}^{N-1}{a_m}\right)\Delta_N(\mathbf{x}),
\end{equation*}
where $a_m = m(m - 1) - m(u + u' + b)$. The evident equality $c_N = \sum_{m=1}^{N-1}{a_m}$, where $c_N$ is defined in $(\ref{dN})$, then shows
\begin{equation*}
    \DD^{\free}\Delta_N(\mathbf{x}) := \sum_{i=1}^N{\DD^{[i]}}\Delta_N(\mathbf{x}) = c_N\Delta_N(\mathbf{x}).
\end{equation*}

The specialization of Proposition $\ref{generalcriterionN}$ yields

\begin{prop}\label{fellerspecialN}
The matrix of transition rates $R^{(N)}$ defined in $(\ref{defnRN})$ is non-explosive. Therefore it is the $q$-matrix of a unique Markov semigroup on $\GTp_N$, which is a Feller semigroup and will be denoted by $(P_N(t))_{t\geq 0}$. It is explicitly given by
\begin{equation}\label{formsemigroup}
P_N(t; \lambda, \nu) = e^{-c_Nt}\cdot\frac{\Delta_N(\hatn)}{\Delta_N(\hatl)}\det_{i, j} [ P_1(t; l_i, n_j) ],
\end{equation}
where $c_N$ is as in $(\ref{dN})$, $(P_1(t))_{t\geq 0}$ is the unique Markov semigroup with $q$-matrix $R = R^{(1)}$, and $P_1(t; l, n)$, $l, n\in\Z$, are the entries of the stochastic matrix $P_1(t)$.
\end{prop}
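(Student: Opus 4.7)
The plan is to obtain Proposition \ref{fellerspecialN} as a direct specialization of Proposition \ref{generalcriterionN}, and then check the Feller property by hand. To apply Proposition \ref{generalcriterionN} with $E = \Zp^{\epsilon}$ and $Q = R$, I need (i) that $R$ is non-explosive, which is Proposition \ref{fellerspecial1}, and (ii) Assumption \ref{operatorassumption}. The first item of the assumption is Lemma \ref{operatorvalid}, while the second item is the eigenvalue identity $\DD^{\free}\Delta_N = c_N \Delta_N$: writing $\Delta_N(\hatl) = \det(\hatl_i^{N-j})$ and using the monic polynomial eigenfunctions $\phi_m$ of $\DD$ (whose existence follows from Lemma \ref{operatorvalid}, since $\DD$ acts upper-triangularly with diagonal entries $a_0, a_1, \ldots$ in the basis $\{\hatx^m\}$) to rewrite $\Delta_N(\hatl) = \det(\phi_{N-j}(\hatl_i))$, each Leibniz term $\prod_i \phi_{N-\sigma(i)}(\hatl_i)$ is a tensor-product eigenfunction of $\DD^{\free}$ with eigenvalue $\sum_{m=0}^{N-1} a_m = c_N$. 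Proposition \ref{generalcriterionN} then yields the non-explosiveness of $R^{(N)}$, the uniqueness of a standard Markov semigroup $(P_N(t))_{t\geq 0}$ with $q$-matrix $R^{(N)}$, and the explicit formula $(\ref{formsemigroup})$.

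For the Feller property, since $\GTp_N$ is discrete and the semigroup is standard (entries continuous in $t$), strong continuity is automatic; by $(\ref{fellercountable})$ I only need to show $P_N(t; \lambda, \nu) \to 0$ as $\lambda \to \infty$ for each fixed $\nu$ and $t$. Two estimates do the job. First, the Karlin--McGregor identity \cite{KM} realizes $\det_{i,j}[P_1(t; l_i, n_j)]$ as the non-intersection probability for $N$ independent copies of the level-$1$ process, so this determinant is bounded above by $1$. Second, since $l_1 > l_2 > \ldots > l_N \geq 0$ strictly, each factor $\hatl_i - \hatl_j = (l_i - l_j)(l_i + l_j + 2\epsilon) \geq 1 + 2\epsilon$ uniformly, while the $N-1$ factors $\hatl_1 - \hatl_j$ with $j \geq 2$ grow at least linearly in $l_1$. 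Combining the two estimates yields $\Delta_N(\hatl) \geq C \lambda_1^{N-1}$ for some $C = C(N, \epsilon) > 0$, and hence
\begin{equation*}
|P_N(t; \lambda, \nu)| \leq e^{-c_N t}\frac{|\Delta_N(\hatn)|}{C \lambda_1^{N-1}} \longrightarrow 0 \quad \text{as } \lambda_1 \to \infty.
\end{equation*}

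The main subtlety is that $\lambda \to \infty$ in $\GTp_N$ is driven solely by $\lambda_1 \to \infty$ (the other components may remain bounded), so the decay of the Karlin--McGregor determinant alone would not suffice: it is the quadratic-in-$l_1$ growth of the Vandermonde factors $\hatl_1 - \hatl_j = (l_1 - l_j)(l_1 + l_j + 2\epsilon)$ that provides the needed $\lambda_1^{N-1}$ denominator growth, and this is what ultimately makes the Feller property survive the specialization of the Doob transform construction to level $N$.
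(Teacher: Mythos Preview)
Your proof is correct and the overall strategy matches the paper's: both invoke Proposition~\ref{generalcriterionN} after verifying Assumption~\ref{operatorassumption} (the paper carries out that verification in the text immediately preceding the proposition). One small remark on your verification of the second item: the existence of monic polynomial eigenfunctions $\phi_m$ of $\DD$ requires the diagonal entries $a_0,\dots,a_{N-1}$ to be pairwise distinct, which you do not check. The cleaner route, implicit in the paper, is to observe that $\DD^{\free}$ is $S_N$-symmetric and preserves the filtration by degree, hence maps the one-dimensional space of antisymmetric polynomials of degree $\leq N-1$ in each variable to itself; the eigenvalue is then read off from the top monomial.

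For the Feller property the two arguments diverge. The paper bounds $\Delta_N(\hatl)\geq 1$ trivially and uses the level-one Feller property $P_1(t;l_1,n)\to 0$ (Proposition~\ref{fellerspecial1}) to conclude that $\det_{i,j}[P_1(t;l_i,n_j)]\to 0$ via a cofactor expansion along the first row. You instead bound the determinant by $1$ through its Karlin--McGregor interpretation as a non-intersection probability, and extract the decay from the Vandermonde via $\hatl_1-\hatl_j\geq l_1+2\epsilon$, giving $\Delta_N(\hatl)\gtrsim\lambda_1^{N-1}$. Both are valid for $N>1$; the paper's route is marginally shorter and makes the role of the level-one Feller property explicit. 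Your closing sentence is a little misleading: the determinant \emph{does} decay to zero as $\lambda\to\infty$ (this is precisely what the paper exploits); you simply chose not to use that fact.
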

\begin{proof}
Most statements above follow from Proposition $\ref{generalcriterionN}$. We are only left to show that $(P_N(t))_{t\geq 0}$ is Feller. The Feller property of $(P_1(t))_{t\geq 0}$ given by Proposition $\ref{fellerspecial1}$ states
\begin{equation}\label{feller1}
\lim_{l\rightarrow\infty}{P_1(t; l, n)} = 0, \hspace{.1in}\textrm{ for any }n\in\Zp.
\end{equation}
Evidently $\Delta_N(\hatl) \geq 1$, so $0 \leq P_N(t; \lambda, \nu) \leq const\cdot\det_{i, j}[P_1(t; l_i, n_j)]$. From $(\ref{feller1})$, and because $\lambda\longrightarrow\infty$ is equivalent to $l_1 \longrightarrow\infty$, we have $\det_{i, j}[P_1(t; l_i, n_j)] \xrightarrow{\lambda\rightarrow\infty} 0$. Hence
\begin{equation*}
\lim_{\lambda\rightarrow\infty}{P_N(t; \lambda, \nu)} = 0, \hspace{.1in}\textrm{ for any }\nu\in\GTp_N,
\end{equation*}
which is the Feller property for the semigroup $(P_N(t))_{t\geq 0}$ on the countable space $\GTp_N$.
\end{proof}

\begin{rem}
The matrix of transition rates $R^{(N)}$ can be described as follows: if $\lambda\neq\mu$, the nondiagonal entry $R^{(N)}(\lambda, \mu)$ is nonzero if and only if $\sum_i{|\lambda_i - \mu_i|} = 1$; in such case

\begin{equation}\label{Rexpression}
    R^{(N)}(\lambda, \mu) =
	\begin{cases}
                  \beta(l_k)\cdot\prod_{j\neq k}{\frac{\widehat{(l_k+1)} - \hatl_j}{\hatl_k - \hatl_j}} & \textrm{if } \mu_k = \lambda_k + 1,\\
                  \delta(l_k)\cdot\prod_{j\neq k}{\frac{\widehat{(l_k - 1)} - \hatl_j}{\hatl_k - \hatl_j}} & \textrm{if } \mu_k = \lambda_k - 1.
          \end{cases}
\end{equation}
Thus the generator of the Feller semigroup $(P_N(t))_{t\geq 0}$ is given by the following partial difference operator, cf \cite[(5.1)]{Ol4},
\begin{equation*}
\begin{gathered}
\left( D^{BC}_{u, u' a, b | N} f \right)(\hatl) = \sum_{k=1}^N \left\{ \prod_{j\neq k}{\frac{\widehat{(l_k+1)} - \hatl_j}{\hatl_k - \hatl_j}}\frac{(l_k + 2\epsilon)(l_k + a + 1)(l_k - u)(l_k - u')}{2(l_k + \epsilon)} \left( \frac{f(\widehat{l+\epsilon_k}) - f(\hatl)}{2l_k + 2\epsilon + 1} \right) \right.\\
\left. + \prod_{j\neq k}{\frac{\widehat{(l_k - 1)} - \hatl_j}{\hatl_k - \hatl_j}} \frac{(l_k + b)l_k(l_k + u + 2\epsilon)(l_k + u' + 2\epsilon)}{2(l_k + \epsilon)} \left( \frac{f(\widehat{l - \epsilon_k}) - f(\hatl)}{2l_k + 2\epsilon - 1} \right) \right\}.
\end{gathered}
\end{equation*}
The domain of the generator can be shown to be $\{f\in C_0(\Omega^{\epsilon}_N) : D^{BC}_{u, u', a, b|N} f \in C_0(\Omega_N^{\epsilon})\}$.
\end{rem}

\begin{rem}
If we want to be explicit about the pair $(u, u')\in\mathcal{H}^{\circ}$ used in the definition of $R^{(N)}$, we write instead $R^{(N)}_{u, u'}$.
\end{rem}

\section{The Master Relation}\label{commuting}

The goal of this section is to prove the coherence between the Markov semigroups from Section $\ref{specialization}$ and the Markov kernels from Section $\ref{finitekernels}$. The master relation $(\ref{commutativity})$ holds when we define the semigroups $(P_N(t))_{t\geq 0}$ from a pair $(u, u')\in\mathcal{H}^{\circ}$ that depends on $N$. For that reason, we introduce another pair $(z, z')$ of complex numbers that satisfy
\begin{eqnarray}
(z - k)(z' - k) &>& 0 \textrm{, for all }k \in \Z\nonumber\\
(z + k + 2\epsilon)(z' + k + 2\epsilon) &>& 0 \textrm{, for all }k\in\N,\label{zparamsnoname}\\
z + z' &>& - (1 + b).\nonumber
\end{eqnarray}
The constraints above are the same as those for pairs in $\mathcal{H}^{\circ}$, except the first: the condition here requires the inequality to hold just for $k\in\Z$, as opposed to for all $k\in\Zp$. Also, if $(z, z')$ satisfies all inequalities in $(\ref{zparamsnoname})$, then $(z+m, z'+m)\in\mathcal{H}^{\circ}$ for all $m\in\Zp$. 

For any $N \geq 1$, let $(P_N(t))_{t\geq 0}$ be the Feller semigroup defined in Section $\ref{specialization}$, with parameters $(u, u') = (z+N-1, z'+N-1)$. The main goal of this section is to prove
\begin{thm}\label{intertwiningthm}
With Markov kernels $\{\Lambda^{N+1}_N\}_{N \geq 1}$ as in Section $\ref{finitekernels}$, and semigroups $(P_N(t))_{N \geq 1}$ as above, the master relation
\begin{equation*}
P_{N+1}(t)\Lambda^{N+1}_N = \Lambda^{N+1}_NP_N(t)
\end{equation*}
 holds for all $N\geq 2$, $t\geq 0$.
\end{thm}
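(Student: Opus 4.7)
My overall strategy is a two-step reduction: first, from the semigroup identity to an infinitesimal operator identity, and then from the operator identity to a computation with (shifted) symmetric Jacobi polynomials.

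For the first step, I would argue that the master relation follows once one establishes the generator-level identity
\begin{equation*}
R^{(N+1)}_{z+N,\, z'+N}\,\Lambda^{N+1}_N \;=\; \Lambda^{N+1}_N\,R^{(N)}_{z+N-1,\, z'+N-1}
\end{equation*}
as (infinite) matrices with rows indexed by $\GTp_{N+1}$ and columns by $\GTp_N$. Both sides make sense as finite sums for each entry: the tridiagonal-type sparsity encoded in $(\ref{Rexpression})$ makes the rows of $R^{(N+1)}$ finitely supported, while each row of $\Lambda^{N+1}_N$ is supported on $\{\mu : \mu\prec_{BC}\lambda\}$, also a finite set. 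Granting the operator identity, one sets $F(t):=\Lambda^{N+1}_N P_N(t)$ and $G(t):=P_{N+1}(t)\Lambda^{N+1}_N$, checks that both solve the forward Kolmogorov equation $\partial_t X = R^{(N+1)} X$ with common initial value $\Lambda^{N+1}_N$, and invokes the uniqueness guaranteed by the non-explosion and Feller property of $R^{(N+1)}$ in Proposition~$\ref{fellerspecialN}$ to conclude $F\equiv G$.

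For the operator identity, I would test both sides against the family of polynomial test functions $f_{\mathbf{x}}(\mu) := \Phi_{\mu}(x_1, \ldots, x_N \mid a, b)$, parametrized by $\mathbf{x} = (x_1, \ldots, x_N) \in [-1,1]^N$. These are natural because of the defining branching relation $(\ref{linksdef})$, which gives
\begin{equation*}
(\Lambda^{N+1}_N f_{\mathbf{x}})(\lambda) \;=\; \Phi_{\lambda}(x_1, \ldots, x_N, 1 \mid a, b).
\end{equation*}
Using the explicit sparse expression $(\ref{Rexpression})$ for $R^{(N)}$, the action of $R^{(N)}$ on $f_{\mathbf{x}}$ becomes a weighted sum of neighboring Jacobi polynomial values that can be simplified through the three-term recurrence for the univariate $\Phi_n(x\mid a,b)$. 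One ends up expressing both $R^{(N+1)}[\lambda\mapsto \Phi_\lambda(\mathbf{x},1\mid a,b)]$ and $\Lambda^{N+1}_N R^{(N)} f_{\mathbf{x}}$ as explicit symmetric-polynomial expressions in $\mathbf{x}$ times combinations of $\Phi_\lambda(\mathbf{x},1\mid a,b)$; matching them reduces the operator identity to a single polynomial identity that must hold for all $\mathbf{x}\in[-1,1]^N$.

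The hard step—and where I expect the main obstacle to lie—is the combinatorial heart of this polynomial identity: at level $N$ the multivariate Jacobi polynomial is built with rate parameters $(z+N-1,z'+N-1)$, while at level $N+1$ the rates use $(z+N,z'+N)$, and the entire master relation hinges on an exact cancellation between this parameter shift and the contribution of the extra variable specialized at $x_{N+1}=1$. Proving this cancellation requires a binomial-type/coherence formula for shifted symmetric Jacobi polynomials; this is precisely the identity flagged in the introduction as the ``brute-force'' computation relegated to Appendix~$\ref{appendixA}$. With that identity in hand, expanding both sides in the basis $\{\Phi_\mu(\mathbf{x}\mid a,b)\}_{\mu\in\GTp_N}$ and matching coefficients yields the sought operator identity, and thereby the master relation $(\ref{commutativity})$ for all $N\geq 2$ and $t\geq 0$.
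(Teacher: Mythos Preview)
Your two-step reduction (infinitesimal identity, then lift via a Cauchy problem) matches the paper's architecture exactly. Two points deserve comment.

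\textbf{Choice of test functions.} You propose to test the infinitesimal identity against $f_{\mathbf{x}}(\mu)=\Phi_\mu(\mathbf{x}\mid a,b)$ and to simplify $R^{(N)}f_{\mathbf{x}}$ via the univariate three-term recurrence. The paper instead tests against the shifted symmetric polynomials $T^*_{\mu|N}(\hatn_1,\dots,\hatn_N)$ of $(\ref{tstar})$. This is not an arbitrary preference: the key computation is Lemma~\ref{techlemma}, which rests on the identity $\DD[(\hatx|\boldsymbol{\epsilon})^m]=m(m-b-u-u'-1)(\hatx|\boldsymbol{\epsilon})^m+\text{(single lower term)}$. The factorial monomials $(\hatx|\boldsymbol{\epsilon})^m$ are near-eigenfunctions of $\DD$, and that is what makes the infinitesimal identity collapse to the two elementary checks at the end of the proof of Proposition~\ref{commratesprop}. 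Your $\Phi_\mu(\mathbf{x})$, viewed as a function of $\mu$, is not a symmetric polynomial in $\hatn$, and the coefficients $R^{(N)}(\mu,\mu\pm e_k)$ in $(\ref{Rexpression})$ do not line up with the Jacobi three-term recurrence in any transparent way. In practice one would have to expand $\Phi_\mu$ via the binomial formula of Proposition~\ref{binomialformula} into the $T^*$ basis---which is the paper's route. So your plan is correct in spirit, but the ``three-term recurrence'' shortcut is not the mechanism that works; the factorial-power calculation is.

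\textbf{Uniqueness in the lift.} Your appeal to ``uniqueness guaranteed by non-explosion and the Feller property'' is too fast. Non-explosion gives uniqueness of the \emph{semigroup} with $q$-matrix $R^{(N+1)}$, but here one needs uniqueness of solutions to the Banach-space Cauchy problem $F'(t)=AF(t)$, $F(0)=\Lambda^{N+1}_N f$, in $C_0(\GTp_{N+1})$, for $f$ finitely supported. That requires knowing the initial datum lies in the domain where the relevant uniqueness theorem applies; concretely, one must check $A(\Lambda^{N+1}_N\delta_\mu)\in C_0(\GTp_{N+1})$. This is Lemma~\ref{techcauchy}, and its proof needs the sharpened estimate $\Lambda^{N+1}_N(\lambda,\mu)\le const\cdot\lambda_1^{-N-1}$, which in turn uses $\lambda_1-\lambda_3\ge const\cdot\lambda_1$ (valid because $\mu_1\ge\lambda_3$). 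This is precisely where the restriction $N\ge 2$ in the theorem comes from, and your outline does not account for it.
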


By virtue of the method of intertwiners, see Section $\ref{methodinter}$, Theorem $\ref{intertwiningthm}$ effectively shows the existence of Feller semigroups $(P_{\infty}(t))_{t\geq 0}$ depending on a pair $(z, z')\in\C^2$ (and also on the pair $(a, b)$) that satisfies $(\ref{zparamsnoname})$, and moreover
\begin{equation*}
P_{\infty}(t)\Lambda^{\infty}_N = \Lambda^{\infty}_NP_N(t), \hspace{.2in}N\geq 2, \ t\geq 0.
\end{equation*}
In Section $\ref{zmeasuressubsection}$, it will be shown that for pairs $(z, z')$ in a smaller set $\mathcal{H} \subset \mathcal{H}^{\circ}$, the semigroup $(P_{\infty}(t))_{t\geq 0}$ has the $z$-measure associated to $(z, z', a, b)$ as its unique invariant measure, thus concluding the proof of the main theorem of the paper.

\subsection{Infinitesimal master relation}

We prove a version of the master relation that does not involve the semigroups $(P_N(t))_{t\geq 0}$, but rather their $q$-matrices. Recall that we denote the $q$-matrix of $(P_N(t))_{t \geq 0}$ by $R^{(N)}_{u, u'}$ if we wish to be specific about the pair of parameters $(u, u')\in\mathcal{H}^{\circ}$ used to define it.
\begin{prop}\label{commratesprop}
Let $N\in\N$ and $(z, z')\in\C^2$ be a pair that satisfies $(\ref{zparamsnoname})$. If we set $u = z+N-1, u' = z'+N-1$, then $(u, u'), (u+1, u'+1)\in\mathcal{H}^{\circ}$ and the following relation holds
\begin{equation}\label{commrates1}
R^{(N+1)}_{u+1, u'+1}\Lambda^{N+1}_N = \Lambda^{N+1}_NR^{(N)}_{u, u'}.
\end{equation}
\end{prop}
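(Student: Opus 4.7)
The assertion $(u, u'), (u+1, u'+1) \in \mathcal{H}^{\circ}$ is a direct check from Definition \ref{Hnod}: with $u = z+N-1$ and $N \geq 1$, the conditions defining $\mathcal{H}^{\circ}$ at $(u, u')$ or $(u+1, u'+1)$ reduce at once to the hypotheses $(z-k)(z'-k) > 0$ for $k\in\Z$, $(z+k+2\epsilon)(z'+k+2\epsilon) > 0$ for $k\in\N$, and $z+z' > -(1+b)$ assumed in (\ref{zparamsnoname}). The substance of the proposition is the intertwining (\ref{commrates1}). My plan is to prove it by testing both sides against a sufficiently rich family of functions on $\GTp_N$ and reducing to a polynomial identity that can be verified via the binomial formula for shifted symmetric Jacobi polynomials established in Appendix \ref{appendixA}.

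For each $\boldx = (x_1, \ldots, x_N)$ set $g_{\boldx}(\mu) := \Phi_\mu(\boldx \mid a, b)$ and regard it as a function on $\GTp_N$. Since $\{\Phi_\mu\}_{\mu\in\GTp_N}$ is a basis of the algebra of symmetric polynomials in $N$ variables, (\ref{commrates1}) is equivalent to the equality
$$R^{(N+1)}_{u+1, u'+1}\,\Lambda^{N+1}_N\, g_{\boldx} \;=\; \Lambda^{N+1}_N\, R^{(N)}_{u, u'}\, g_{\boldx}$$
holding for every $\boldx$. By (\ref{linksdef}), $(\Lambda^{N+1}_N g_{\boldx})(\lambda) = \Phi_\lambda(\boldx, 1 \mid a, b)$, so the left-hand side is the action of the $(N+1)$-variable difference operator $D^{BC}_{u+1,u'+1,a,b|N+1}$ (cf.\ the display at the end of Section \ref{semigroupssection}) on $\lambda \mapsto \Phi_\lambda(\boldx, 1\mid a, b)$. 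On the right, the sparse structure (\ref{Rexpression}) of $R^{(N)}_{u,u'}$ shows that $(R^{(N)}_{u,u'} g_{\boldx})(\sigma)$ is a finite combination of shifted values $\Phi_{\sigma'}(\boldx\mid a,b)$ (with $\sigma'$ differing from $\sigma$ by $\pm 1$ in a single coordinate), weighted by the rates $\beta, \delta$ and by Vandermonde factors in $\hatl$. The central technical step is to re-expand these shifted Jacobi polynomials in the basis $\{\Phi_{\sigma'}(\boldx\mid a,b)\}_{\sigma' \in \GTp_N}$ using the binomial formula of Appendix \ref{appendixA}.

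After this re-expansion and application of $\Lambda^{N+1}_N$ term-by-term via (\ref{linksdef}), both sides of (\ref{commrates1}) become the same symmetric polynomial in $\boldx$, and the intertwining reduces to an identification of coefficients in the $\{\Phi_\mu(\boldx\mid a,b)\}$ basis. The matching exploits the compatibility between the parameter shift $(u, u')\mapsto (u+1, u'+1)$ and the dimension shift $N \mapsto N+1$: with $u = z+N-1$, the combination $l_i - u = \lambda_i + 1 - i - z$ (and similarly the other factors entering $\beta$ and $\delta$) takes the same form at level $N$ with parameters $(u, u')$ as at level $N+1$ with parameters $(u+1, u'+1)$, which is the structural reason the intertwining can close. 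The main obstacle is precisely the re-expansion step — deriving the binomial formula for shifted Jacobi polynomials and carrying out the combinatorial bookkeeping needed to identify the resulting expansion with the one produced by $D^{BC}_{u+1,u'+1,a,b|N+1}$. Once that formula is in hand, the remaining verification is mechanical but lengthy, controlled by the interlacing supports of both $\Lambda^{N+1}_N$ and the rate matrices.
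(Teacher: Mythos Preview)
Your outline is correct and lands on essentially the same mechanism the paper uses, but through a dual presentation. You propose to test the intertwining against the functions $g_{\boldx}(\mu)=\Phi_\mu(\boldx\mid a,b)$ and then invoke the binomial formula of Appendix~\ref{appendixA}. That formula expands $\Phi_\lambda(1+x_1,\ldots,1+x_N\mid a,b)$ as $\sum_\mu T^*_{\mu|N}(\hatl)\cdot s_\mu(x)/(2^{|\mu|})$, so your ``testing against all $\boldx$'' is exactly the generating-function packaging of the paper's choice to test against each $T^*_{\mu|N}(\hatn)$ separately (Lemma~\ref{easyeval}). After unpacking the binomial expansion, your ``re-expansion step'' becomes precisely the computation of $\DD^{(N)}T^*_{\mu|N}$ carried out in Lemma~\ref{techlemma}, and your application of $\Lambda^{N+1}_N$ termwise is Proposition~\ref{coherence}. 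The two displayed identities the paper checks at the end of the proof of Proposition~\ref{commratesprop} are exactly the ``compatibility between the parameter shift and the dimension shift'' you describe informally.

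Two small clarifications. First, your phrase ``re-expand these shifted Jacobi polynomials'' is a bit misleading: what shifts is the discrete index $\sigma$ under $R^{(N)}$, not the continuous argument $\boldx$; the binomial formula converts that discrete shift into the action of $\DD$ on the factors $(\hatl\mid\boldsymbol{\epsilon})^m$, which is the substance of Lemma~\ref{techlemma}. Second, to make your reduction rigorous you should note (as the paper does before Lemma~\ref{easyeval}) that both $R^{(N+1)}_{u+1,u'+1}\Lambda^{N+1}_N$ and $\Lambda^{N+1}_N R^{(N)}_{u,u'}$ have finitely many nonzero entries in each row, so the linear-independence argument for $\{\Phi_\mu(\boldx)\}_\mu$ applies without convergence issues. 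With these points made explicit, your approach and the paper's are the same proof written in two coordinate systems; the paper's version is slightly cleaner because working with $T^*_{\mu|N}$ directly avoids carrying the Schur factors $s_\mu(x)$ through the computation.
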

It is clear from the definitions that $(u, u'), (u+1, u'+1)\in\mathcal{H}^{\circ}$ if $(z, z')\in\C^2$ satisfies the conditions in $(\ref{zparamsnoname})$, but the identity $(\ref{commrates1})$ is far from trivial. Let us begin with the following lemma.
\begin{lem}\label{easyeval}
Let $[A(\lambda, \nu)]$ be a $\GTp_{N+1}\times\GTp_N$ matrix with finitely many nonzero entries in each row. If for any symmetric polynomial $P(x_1, \ldots, x_N)$ with real coefficients and any $\lambda\in\GT_{N+1}^+$, we have
\begin{equation*}
\sum_{\nu\in\GTp_N}{A(\lambda, \nu)P(\hatn_1, \ldots, \hatn_N)} = 0,
\end{equation*}
then $A$ is the zero matrix.
\end{lem}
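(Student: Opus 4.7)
Fix $\lambda \in \GTp_{N+1}$ and consider the $\lambda$-row of $A$, namely the finitely supported function $\nu \mapsto A(\lambda,\nu)$ on $\GTp_N$. Suppose, toward contradiction, that this row is not identically zero, and let $\nu^{(1)},\dots,\nu^{(k)}$ be the finitely many positive signatures in $\GTp_N$ at which it is nonzero. I will exhibit, for each index $i$, a symmetric polynomial $P_i$ in $N$ variables that separates $\nu^{(i)}$ from the other $\nu^{(j)}$; substituting $P_i$ into the hypothesis will then force $A(\lambda,\nu^{(i)})=0$, a contradiction.

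To each $\nu^{(i)}\in\GTp_N$ attach the corresponding tuple $\hat{n}^{(i)}=(\hat{n}^{(i)}_1,\dots,\hat{n}^{(i)}_N)\in\Omega_N^\epsilon$ via the bijection $(\ref{bijection})$. Because the components of $n^{(i)}$ are strictly decreasing nonnegative integers, the components of $\hat{n}^{(i)}$ are strictly decreasing positive reals, so $\hat{n}^{(i)}$ consists of $N$ distinct values. Moreover, since the correspondence $\nu\leftrightarrow\hat{n}$ is a bijection, the unordered sets $S_i\myeq\{\hat{n}^{(i)}_1,\dots,\hat{n}^{(i)}_N\}$ are pairwise distinct as $i$ varies over $\{1,\dots,k\}$.

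The next step is to produce the separating symmetric polynomials. By the fundamental theorem of symmetric polynomials, an unordered $N$-tuple of real numbers is determined by the values of the elementary symmetric polynomials $e_1,\dots,e_N$ evaluated at it. Hence the points $\bigl(e_1(S_i),\dots,e_N(S_i)\bigr)\in\R^N$ are pairwise distinct as $i$ varies. By ordinary Lagrange-style polynomial interpolation in $N$ variables, I can choose a polynomial $Q_i\in\R[y_1,\dots,y_N]$ with $Q_i\bigl(e_1(S_j),\dots,e_N(S_j)\bigr)=\delta_{ij}$ for all $j\in\{1,\dots,k\}$. Then $P_i(x_1,\dots,x_N)\myeq Q_i\bigl(e_1(x_1,\dots,x_N),\dots,e_N(x_1,\dots,x_N)\bigr)$ is a real-coefficient symmetric polynomial in $N$ variables satisfying $P_i(\hat{n}^{(j)}_1,\dots,\hat{n}^{(j)}_N)=\delta_{ij}$.

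Finally, plug $P=P_i$ into the hypothesis: the sum $\sum_{\nu\in\GTp_N}A(\lambda,\nu)P_i(\hat{n}_1,\dots,\hat{n}_N)$ is $0$, but only the terms indexed by $\nu^{(1)},\dots,\nu^{(k)}$ contribute, and on those $P_i$ evaluates to $\delta_{ij}$. This gives $A(\lambda,\nu^{(i)})=0$, the desired contradiction. Since $\lambda\in\GTp_{N+1}$ was arbitrary, $A\equiv 0$. The only mildly nontrivial step is the interpolation, which is genuinely straightforward because there are only finitely many points to separate; everything else is bookkeeping via the bijection of Section $\ref{conventions}$.
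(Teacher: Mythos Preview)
Your proof is correct and follows essentially the same route as the paper's: fix $\lambda$, list the finitely many $\nu^{(i)}$ in the support of the row, observe that the corresponding $\hat n^{(i)}$ have pairwise distinct $S_N$-orbits (the paper phrases this as ``$\epsilon\geq 0$ implies the orbits are pairwise distinct''), and then produce a symmetric polynomial separating one index from the rest. Your construction via elementary symmetric polynomials and Lagrange interpolation is simply a more explicit version of the paper's one-line claim that such a polynomial exists.
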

\begin{proof}
For any pairwise distinct $\nu^{(1)}, \ldots, \nu^{(k)}\in\GTp_N$, let $\hatn^{(1)}, \ldots, \hatn^{(k)}\in\Omega_N^{\epsilon}$ be defined by $\hatn^{(i)}_j = (\nu^{(i)}_j + N - j + \epsilon)^2 \ \forall 1\leq i\leq k, \ 1\leq j\leq N$. The key fact is that $\epsilon \geq 0$ implies that the orbits of $\hatn^{(1)}, \ldots, \hatn^{(k)}$ under the symmetric group action are pairwise distinct. Then there exists a polynomial $P$ such that $P(\hatn^{(1)}) \neq 0$ and $P(\hatn^{(2)}) = \dots P(\hatn^{(k)}) = 0$. The remaining details of the proof are left to the reader.
\end{proof}

Lemma $\ref{easyeval}$ paves the way to proving Proposition $\ref{commratesprop}$ indirectly. If we let the difference between the left and right-hand sides of $(\ref{commrates1})$ be the matrix $A$, of format $\GTp_{N+1}\times\GTp_N$, then we claim that $A$ has finitely many nonzero entries in each row. In fact, for any fixed $\lambda\in\GTp_{N+1}$, we have $R_{u+1, u'+1}^{(N+1)}(\lambda, \kappa) \neq 0$ if and only if $\kappa\in\GTp_{N+1}$ differs from $\lambda$ in at most one coordinate, see $(\ref{defnRN})$, and there are finitely many of those positive $N$-signatures $\kappa$. Similarly, for any fixed $\lambda\in\GTp_N$, there exists finitely many $\kappa\in\GTp_N$ such that $R^{(N)}_{u, u'}(\lambda, \kappa) \neq 0$.
On the other hand, for any fixed $\lambda\in\GTp_{N+1}$, we have $\Lambda^{N+1}_N(\lambda, \kappa) \neq 0$ if and only if $\lambda \succ_{BC}\kappa$, see Section $\ref{posGT}$, and there are finitely many of those positive $N$-signatures $\kappa$.

Therefore each product of matrices $R^{(N+1)}_{u+1, u'+1}\Lambda^{N+1}_N$ and $\Lambda^{N+1}_N R^{(N)}_{u, u'}$ has finitely many nonzero entries in each row, and therefore so does their difference, the matrix $A$.

From Lemma $\ref{easyeval}$, we could conclude that $A$ is the zero matrix if the operator defined by $A$ kills all functions $T_i(\hatn_1, \ldots, \hatn_N)$, where $\{T_i\}_i$ is a basis of the algebra of symmetric polynomials on $N$ variables. Thus we would like to introduce a choice of such a basis that makes the calculations feasible.

For any sequence $\mathbf{a} = (a_1, a_2, \ldots)$ and $k\in\Zp$, define
\begin{equation*}
(x | \mathbf{a})^k := (x - a_1)(x - a_2)\cdots(x - a_k), \textrm{ if }k \geq 1 \textrm{ and }(x|\mathbf{a})^0 := 1.
\end{equation*}
For any $\mu\in\GTp_N$, consider the constant
\begin{equation}\label{bNmu}
c(N, \mu) \myeq \prod_{(i, j)\in\mu}{(N + j - i)(N + a + j - i)} = \prod_{i = 1}^N{(N - i + 1)_{\mu_i}(N + a - i + 1)_{\mu_i}},
\end{equation}
where the first product is taken over the squares $(i, j)$ inside the Young diagram of partition $\mu$, and the second product uses the \textit{Pochhammer symbols} $(x)_n$, $x\in\C, n\in\Zp$, defined by
\begin{equation*}
(x)_n \myeq
    \begin{cases}
        \prod_{i=1}^n{(x+i-1)} & \textrm{if } n\geq 1,\\
        1 & \textrm{if } n = 0.
    \end{cases}
\end{equation*}
Consider also the sequence
\begin{equation*}
\boldsymbol{\epsilon} = (\epsilon^2, (\epsilon + 1)^2, (\epsilon + 2)^2, \ldots).
\end{equation*}
Finally, for any $\mu\in\GTp_N$, define the rational function
\begin{equation}\label{tstar}
T^*_{\mu|N}(x_1, \ldots, x_N) = \frac{1}{c(N, \mu)}\frac{\det_{i, j}[(x_i | \boldsymbol{\epsilon})^{\mu_j + N - j}]}{\det_{i, j}[(x_i | \boldsymbol{\epsilon})^{N - j}]}.
\end{equation}
One can show that the denominator $\det[(x_i | \boldsymbol{\epsilon})^{N - j}]$ is in fact the Vandermonde determinant $\Delta_N(x) = \prod_{i < j}{(x_i - x_j)}$. Moreover the rational function $T^*_{\mu|N}$ is in fact a symmetric polynomial on the $N$ variables $x_1, \ldots, x_N$. The polynomial $T^*_{\mu|N}$ has degree $|\mu| = \mu_1 + \mu_2 + \ldots + \mu_N$, but it is non-homogeneous; its top degree homogeneous part is a nonzero constant multiple of the Schur polynomial $s_{\mu}(x_1, \ldots, x_N)$, see \cite{M}.

For the positive $N$-signature $\mu$, we can also define a polynomial $T_{\mu|K}^*$ on $K$ variables $x_1, \dots, x_K$, for any $K \geq N$, by using the positive $K$-signature $\mu = (\mu_1, \dots, \mu_N, 0, \ldots, 0)$ with $K-N$ trailing zeroes instead of $\mu$ in the expression $(\ref{tstar})$.

\begin{prop}\label{coherence}
For any $\lambda\in\GTp_{N+1}$ and $\mu\in\GTp_N$, we have
\begin{equation*}
T_{\mu|N+1}^*(\hatl_1, \hatl_2, \ldots, \hatl_{N+1}) = \sum_{\nu\in\GTp_N}{\Lambda^{N+1}_N(\lambda, \nu)T_{\mu|N}^*(\hatn_1, \hatn_2, \ldots, \hatn_N)}.
\end{equation*}
\end{prop}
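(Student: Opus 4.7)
The plan is to prove Proposition \ref{coherence} by a direct determinantal computation, reducing the identity to a nested univariate summation. This matches the description, in the introduction, of the contents of Appendix \ref{appendixA} as a ``brute-force computation involving a new binomial formula for shifted symmetric (Jacobi) polynomials.''

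First I substitute the determinantal formula $(\ref{tstar})$ for $T^*$ and the explicit expression $(\ref{stochmatrix})$--$(\ref{sumterm})$ for $\Lambda^{N+1}_N$. The factor $\Delta_N(\hatn)$ arising from $(\ref{stochmatrix})$ cancels the $\Delta_N(\hatn)$ in the denominator of $T^*_{\mu|N}(\hatn)$, and multiplying both sides by $\Delta_{N+1}(\hatl)$ removes the remaining Vandermonde. The coherence identity then reduces to the purely combinatorial statement
\begin{equation*}
\frac{\det_{i, j}\bigl[(\hatl_i | \boldsymbol{\epsilon})^{\mu_j + N + 1 - j}\bigr]}{c(N+1, \mu)} = \frac{2^N N!\, \Gamma(N+a+1)}{\Gamma(a+1)\, c(N, \mu)} \sum_{\nu \in \GTp_N} \widetilde{\Lambda}^{N+1}_N(\lambda, \nu)\, \det_{i, j}\bigl[(\hatn_i | \boldsymbol{\epsilon})^{\mu_j + N - j}\bigr],
\end{equation*}
where on the left $\mu$ is viewed as an element of $\GTp_{N+1}$ by appending a trailing zero.

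Next I expand the right-hand determinant by the Leibniz formula and substitute $\widetilde{\Lambda}^{N+1}_N(\lambda, \nu) = \sum_{\kappa \prec \lambda,\, \nu \prec \kappa \cup 0} \prod_i B(\kappa_i + N - i, \nu_i + N - i)$. For each permutation $\sigma \in S_N$ arising from the Leibniz expansion, the sum over $\nu$ factorizes into a product over $i$ of univariate sums of the form
\begin{equation*}
S(r, s; k) := \sum_{n = s}^{r} B(r, n)\,(\hatn | \boldsymbol{\epsilon})^k, \qquad r = \kappa_i + N - i,\ s = \kappa_{i+1} + N - i,\ k = \mu_{\sigma(i)} + N - \sigma(i),
\end{equation*}
with the convention $\kappa_{N+1} = 0$. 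Once a closed form for $S(r, s; k)$ is available, one reassembles the result into a determinant in the $\kappa$-variables, and iterates the same style of univariate identity to perform the outer sum over $\kappa \prec \lambda$. The final output should be a single determinant in $\hatl$ that matches the left-hand side, once the ratio $c(N+1, \mu)/c(N, \mu)$ is verified against the constants produced by the two layers of summation.

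The principal obstacle is the closed-form evaluation of $S(r, s; k)$. The summand is a ratio of Gamma values times a product of $k$ quadratic factors, so the sum is essentially of hypergeometric $_4F_3$ type. The most promising route is to guess and verify a telescoping decomposition $B(r, n)(\hatn | \boldsymbol{\epsilon})^k = G(r, n+1; k) - G(r, n; k)$ for an explicit $G$ built from shifted falling factorials and Gamma ratios, which would give $S(r, s; k) = G(r, r+1; k) - G(r, s; k)$. Finding the correct $G$ will likely require computing small cases ($k = 0, 1, 2$) by hand, spotting the pattern, and then verifying the general identity by direct manipulation of Gamma functions. Once this univariate lemma is in hand, the rest of the argument is algebraic bookkeeping to match the combinatorial constants.
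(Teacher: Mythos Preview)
Your approach is genuinely different from the paper's and has a real gap: the telescoping identity for $S(r,s;k)$ is the heart of the argument, and you have not established it --- you only sketch a plan to guess the antidifference $G$ from small cases. Until that lemma is proved, the proposal is not a proof.

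More importantly, you have missed the clean route. You work from the \emph{explicit} formula $(\ref{stochmatrix})$--$(\ref{sumterm})$ for $\Lambda^{N+1}_N$, which forces you into a two-layer hypergeometric summation (over $\nu$ inside $\kappa$, then over $\kappa$ inside $\lambda$). But the kernel $\Lambda^{N+1}_N$ has a second, much more useful characterization: it is \emph{defined} in $(\ref{linksdef})$ as the branching coefficient for normalized Jacobi polynomials,
\[
\Phi_{\lambda}(x_1,\dots,x_N,1\mid a,b)=\sum_{\nu}\Lambda^{N+1}_N(\lambda,\nu)\,\Phi_{\nu}(x_1,\dots,x_N\mid a,b).
\]
The ``binomial formula'' referred to in the introduction is not a telescoping summation identity but the expansion (Proposition~\ref{binomialformula})
\[
\Phi_{\lambda}(1+x_1,\dots,1+x_N\mid a,b)=\sum_{\mu}\frac{t^*_{\mu|N}(\lambda)}{2^{|\mu|}c(N,\mu)}\,s_{\mu}(x_1,\dots,x_N),
\]
which is obtained in a few lines from the classical one-variable Jacobi expansion $(\ref{P1t})$ and the generic determinantal identity $(\ref{binomialstuff})$. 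Substituting $x_i\mapsto 1+x_i$ in $(\ref{linksdef})$, applying this binomial formula to both sides, using the Schur stability $s_{\mu}(x_1,\dots,x_N,0)=s_{\mu}(x_1,\dots,x_N)\mathbf{1}_{\{\mu_{N+1}=0\}}$, and comparing Schur coefficients gives Proposition~\ref{coherence} directly. No explicit use of $B(r,s)$ or $(\ref{sumterm})$ is needed at all.

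So the paper's argument bypasses the summation you are trying to evaluate by working one level up, with generating functions in the $x$-variables. Your route may in principle be completable, but it is significantly harder and currently incomplete.
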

We leave the proof of Proposition $\ref{coherence}$ to Appendix $\ref{appendixA}$.

\begin{lem}\label{techlemma}
For any $\lambda, \mu\in\GTp_N$, we have
\begin{gather*}
\sum_{\nu\in\GTp_N}R_{u, u'}^{(N)}(\lambda, \nu)T^*_{\mu|N}(\hatn_1, \ldots, \hatn_N)\\
= \left( \sum_{j=1}^N{m_j(m_j - b - u - u' - 1)} - c_N \right)T^*_{\mu|N}(\hatl_1, \ldots, \hatl_N)\\
+ \sum_{j=1}^N{(u - m_j + 1)(u' - m_j + 1)\mathbf{1}_{\{\mu_j - 1 \geq \mu_{j+1}\}} T_{\mu - \mathbf{e}_j|N}^*(\hatl_1, \ldots, \hatl_N)},
\end{gather*}
where $c_N$ is as in $(\ref{dN})$, $\mathbf{e}_j = (0, \ldots, 0, 1, 0, \ldots, 0)$ with $1$ in the $j$-th position and zeroes elsewhere, and we set $\mu_{N+1} := 0$.
\end{lem}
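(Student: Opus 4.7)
The strategy is to recognize the left-hand side as $(\mathcal{D}^{(N)} T^*_{\mu|N})(\hatl)$, where $\mathcal{D}^{(N)}$ is the difference operator associated to $R^{(N)}_{u, u'}$, and then invoke Lemma \ref{Dnoperator} to rewrite
\[
(\mathcal{D}^{(N)} T^*_{\mu|N})(\hatl) = \frac{1}{\Delta_N(\hatl)}\bigl(\mathcal{D}^{\free}(\Delta_N \cdot T^*_{\mu|N})\bigr)(\hatl) - c_N\, T^*_{\mu|N}(\hatl).
\]
Since $\Delta_N(\hatl)\cdot T^*_{\mu|N}(\hatl) = c(N,\mu)^{-1}\det[(\hatl_i|\boldsymbol{\epsilon})^{m_j}]_{i,j}$ with $m_j = \mu_j + N - j$, the problem reduces to computing $\mathcal{D}^{\free}$ on the matrix $M_\mu = [(x_i|\boldsymbol{\epsilon})^{m_j}]_{i,j}$, which by multilinearity of the determinant in rows reduces in turn to understanding $\mathcal{D}$ applied to the one-variable factorial polynomials $(x|\boldsymbol{\epsilon})^m$.

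The heart of the argument is the following identity, which I would isolate as a lemma: for every $m \in \Zp$,
\begin{equation*}
(\ast)\qquad \mathcal{D}\,(x|\boldsymbol{\epsilon})^m = m(m - 1 - u - u' - b)\,(x|\boldsymbol{\epsilon})^m + m(m+a)(m-1-u)(m-1-u')\,(x|\boldsymbol{\epsilon})^{m-1}.
\end{equation*}
To prove $(\ast)$, use the factorization $\hatx - \hat{j} = (x-j)(x+j+2\epsilon)$ to express both $(\widehat{x\pm1}|\boldsymbol{\epsilon})^m$ as explicit rational multiples of $(\hatx|\boldsymbol{\epsilon})^m$; substituting these and the formulas \eqref{Blabel}--\eqref{Dlabel} for $\beta(x),\delta(x)$ into \eqref{diffoperatorD} and clearing common factors shows $\mathcal{D}\,(x|\boldsymbol{\epsilon})^m = m\,Q(x)\,(x|\boldsymbol{\epsilon})^{m-1}$ for an explicit polynomial $Q(x)$ of degree at most $2$ in $x$. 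Since $\mathcal{D}\,(x|\boldsymbol{\epsilon})^m$ must be a polynomial in $\hatx$ (Lemma \ref{operatorvalid}) while $(x|\boldsymbol{\epsilon})^{m-1}$ already is, $Q(x)$ must also be a polynomial in $\hatx$, and comparing leading coefficients forces $Q(x) = (m-1-u-u'-b)\,\hatx + Q(-\epsilon)$. A direct evaluation at $x = -\epsilon$ (using $\epsilon = (a+b+1)/2$ for cancellations) gives $(m-1-u-u'-b)\,\widehat{m-1} + Q(-\epsilon) = (m+a)(m-1-u)(m-1-u')$, and the rewriting $\hatx\,(x|\boldsymbol{\epsilon})^{m-1} = (x|\boldsymbol{\epsilon})^m + \widehat{m-1}\,(x|\boldsymbol{\epsilon})^{m-1}$ converts this into $(\ast)$.

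Now apply $(\ast)$ entrywise to the $i$-th row of $M_\mu$ and sum over $i$. A standard Leibniz expansion with permutation-sign bookkeeping collapses the result into
\[
\mathcal{D}^{\free}\det M_\mu = \Bigl(\sum_{j=1}^N m_j(m_j-1-u-u'-b)\Bigr)\det M_\mu + \sum_{j=1}^N m_j(m_j+a)(m_j-1-u)(m_j-1-u')\,\det M_\mu^{(j)},
\]
where $M_\mu^{(j)}$ is $M_\mu$ with its $j$-th column replaced by $((x_i|\boldsymbol{\epsilon})^{m_j-1})_i$. The determinant $\det M_\mu^{(j)}$ vanishes whenever two of its columns have coinciding exponents, which happens precisely when $\mu_j = \mu_{j+1}$ (with $\mu_{N+1}:=0$; the case $j=N$, $\mu_N=0$ is also absorbed by the explicit factor $m_j$). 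Otherwise the exponents $m_1 > \ldots > m_j - 1 > m_{j+1} > \ldots > m_N$ are strictly decreasing, so $\det M_\mu^{(j)} = \det M_{\mu-\mathbf{e}_j}$. Dividing throughout by $c(N,\mu)$ and using the one-step Pochhammer identity
\[
\frac{c(N,\mu-\mathbf{e}_j)}{c(N,\mu)} = \frac{1}{(N-j+\mu_j)(N+a-j+\mu_j)} = \frac{1}{m_j(m_j+a)},
\]
read off directly from \eqref{bNmu}, the factor $m_j(m_j+a)$ in the coefficient of $\det M_\mu^{(j)}$ cancels, leaving $(m_j-1-u)(m_j-1-u') = (u-m_j+1)(u'-m_j+1)$. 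Subtracting the $c_N\,T^*_{\mu|N}(\hatl)$ term yields exactly the formula of the lemma. The principal obstacle is the identity $(\ast)$; the rest is multilinear determinant algebra together with Pochhammer bookkeeping.
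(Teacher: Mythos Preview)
Your proof is correct and follows essentially the same three-step structure as the paper: reduce to $\mathcal{D}^{(N)}T^*_{\mu|N}$ via Lemma~\ref{Dnoperator}, establish the one-variable identity $(\ast)$ (which is exactly the paper's equation~\eqref{explemma}), and then expand the determinant. Your derivation of $(\ast)$ is slightly more structural (invoking Lemma~\ref{operatorvalid} to force $Q(x)$ to be a polynomial in $\hatx$, then fixing the two unknown coefficients by leading term and a point evaluation) whereas the paper simplifies the bracket directly; and you spell out the $c(N,\mu-\mathbf{e}_j)/c(N,\mu)$ Pochhammer cancellation that the paper leaves implicit in its Step~3, but these are stylistic differences rather than a different approach.
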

\begin{proof}
\textbf{Step 1.} Recall the operators $\mathcal{D}^{(N)}$ of Section $\ref{specialization}$, which act on polynomial functions on $\Omega_N^{\epsilon}$ and are defined by
\begin{equation*}
\mathcal{D}^{(N)}f(\hatl_1, \ldots, \hatl_N) = \sum_{\nu\in\GTp_N}{R_{u, u'}^{(N)}(\lambda, \nu)f(\hatn_1, \ldots, \hatn_N)}.
\end{equation*}
In particular, for $N = 1$, we have the operator $\mathcal{D} = \mathcal{D}^{(1)}$ acts on the polynomial functions on $\Zp^{\epsilon}$ as
\begin{equation*}
\mathcal{D} f(\hatl) = \beta(l) \cdot \left(f(\widehat{l+1}) - f(\hatl)\right) + \delta(l) \cdot \left( f(\widehat{l - 1}) - f(\hatl) \right),
\end{equation*}
where the birth and death rates $\beta(l), \delta(l)$, are defined from the parameters $u, u'$.

Then we have
\begin{equation}\label{defDN}
\mathcal{D}^{(N)}T_{\mu|N}^*(\hatl_1, \ldots, \hatl_N) = \sum_{\nu\in\GTp_N}{R_{u, u'}^{(N)}(\lambda, \nu)T_{\mu|N}^*(\hatn_1, \ldots, \hatn_N)}.
\end{equation}

For any $1\leq i\leq N$, let $\mathcal{D}^{[i]}$ be the operator on the space of polynomial functions on $(\hatl_1, \dots, \hatl_N)\in\Omega_N^{\epsilon}$, which acts as $\mathcal{D}$ on the variable $i$-th variable $\hatl_i$, and treats all other variables $\hatl_j$, $j \neq i$, as constants. From Lemma $\ref{Dnoperator}$, it follows that
\begin{equation}\label{operatorTmuN}
\mathcal{D}^{(N)}T_{\mu|N}^*(\hatl_1, \ldots, \hatl_N) = \left( \frac{1}{\Delta_N(\hatl)} \circ \sum_{i=1}^N{\mathcal{D}^{[i]}} \circ \Delta_N(\hatl) - c_N \right)T_{\mu|N}^*(\hatl_1, \ldots, \hatl_N).
\end{equation}

\textbf{Step 2.} Let us find an expression for $\mathcal{D}[(\hatx | \epsilon)^m]$, $m\in\Zp$, where $\mathcal{D} = \mathcal{D}^{(1)}$ acts on $\R[\hatx]$. By definition,
\begin{equation*}
(\hatx | \boldsymbol{\epsilon})^m = ((x + \epsilon)^2 | \boldsymbol{\epsilon})^m = \prod_{i=1}^m{((x+\epsilon)^2 - (\epsilon+i-1)^2)} = (x \downarrow m)(x+2\epsilon \uparrow m),
\end{equation*}
where we have used the notation $(y\downarrow n) = \prod_{i=1}^n{(y - i + 1)}$, $(y\uparrow n) = \prod_{i=1}^n{(y+i-1)}$ for any $y\in\C$, $n\in\N$, and $(y\downarrow 0) = (y\uparrow 0) = 1$. Then
\begin{eqnarray*}
(\widehat{x+1} | \boldsymbol{\epsilon})^m - (\hatx | \boldsymbol{\epsilon})^m &=& m(2x + 2\epsilon + 1)\cdot(x\downarrow (m-1))(x+2\epsilon+1 \uparrow (m-1)),
\end{eqnarray*}
from which
\begin{eqnarray*}
\DD[(\hatx | \boldsymbol{\epsilon})^m] &=& \beta(x)((\widehat{x+1} | \boldsymbol{\epsilon})^m - (\hatx | \boldsymbol{\epsilon})^m) - \delta(x)((\hatx | \boldsymbol{\epsilon})^m - (\widehat{x-1} | \boldsymbol{\epsilon})^m)\\
&=& \frac{m \ (\hatx | \boldsymbol{\epsilon})^{m-1}}{2(x + \epsilon)}\left\{ (x + a + 1)(x - u)(x - u')(x + 2\epsilon + m - 1)\right.\\
&& -\left.(x + b)(x + u + 2\epsilon)(x + u' + 2\epsilon)(x - m + 1)\right\}\\
&=& m \ (\hatx|\boldsymbol{\epsilon})^{m - 1}\left\{ (m - b - u - u' - 1)(x - m + 1)(x + 2\epsilon + m - 1)\right.\\
&&+ \left.(a + m)(u - m + 1)(u' - m + 1) \right\}.
\end{eqnarray*}
Hence
\begin{equation}\label{explemma}
\mathcal{D}[(\hatx | \boldsymbol{\epsilon})^m] = m(m - b - u - u' - 1)(\hatx | \boldsymbol{\epsilon})^m + (a + m)m(u - m + 1)(u' - m + 1)(\hatx | \boldsymbol{\epsilon})^{m - 1}
\end{equation}

\textbf{Step 3.} From $(\ref{defDN})$ and $(\ref{operatorTmuN})$ of Step 1, and the definition of $T_{\mu|N}^*$ in $(\ref{tstar})$ give
\begin{eqnarray*}
\sum_{\nu\in\GTp_N}{R_{u, u'}^{(N)}(\lambda, \nu)T_{\mu|N}^*(\hatn_1, \ldots, \hatn_N)} &=& \frac{1}{c(N, \mu) \Delta_N(\hatl)}\cdot\sum_{i=1}^N{\mathcal{D}^{[i]}}(\det_{i, j}[(\hatl_i|\boldsymbol{\epsilon})^{\mu_j+N-j}])\\
&&-c_NT_{\mu|N}^*(\hatl_1, \ldots, \hatl_N)
\end{eqnarray*}
From $(\ref{explemma})$ of Step 2 and the definition of the polynomials $T_{\mu|N}^*$, the desired result follows. Let us make a remark on the factors $\mathbf{1}_{\{\mu_j - 1 \geq \mu_{j+1}\}}$ in the statement of the Lemma. If $\mu_j - 1 < \mu_{j+1}$, then $\mu_{j+1} = \mu_j$ and the formula $(\ref{tstar})$, with $\mu - \mathbf{e}_i$ instead of $\mu$, vanishes (the matrix in the numerator has two identical columns, so its determinant is zero).
\end{proof}

{\it Proof of Proposition $\ref{commratesprop}$}

Let $\lambda\in\GTp_{N+1}$, $\nu\in\GTp_N$ be arbitrary. By looking at the $(\lambda, \nu)$ entry of $(\ref{commrates1})$, the identity to prove is
\begin{equation}\label{commrates}
\sum_{\kappa\in\GT_{N+1}^+}{R^{(N+1)}_{u+1, u'+1}(\lambda, \kappa)\Lambda^{N+1}_N(\kappa, \nu)} = \sum_{\kappa\in\GTp_N}{\Lambda^{N+1}_N(\lambda, \kappa)R^{(N)}_{u, u'}(\kappa, \nu)}.
\end{equation}

For $\kappa\in\GTp_N$, we denote by $k, \hatk$ their associated elements in $\Omega_N, \Omega_N^{\epsilon}$, as described by the bijections in $(\ref{bijection})$. For $\kappa\in\GTp_{N+1}$, we also denote by $k, \hatk$ the associated elements in $\Omega_{N+1}, \Omega_{N+1}^{\epsilon}$.

Let us apply both sides of $(\ref{commrates})$ to $T_{\mu|N}^*(\hatn_1, \ldots, \hatn_N)$, for any $\mu\in\GTp_N$, i.e., multiply both sides of $(\ref{commrates})$ by $T_{\mu|N}^*(\hatn_1, \ldots, \hatn_N)$ and add over all $\nu\in\GTp_N$. Thus each side of $(\ref{commrates})$ becomes a double sum with finitely many terms, and we can exchange the order of summation if needed.

By virtue of Proposition $\ref{coherence}$, the left-hand side becomes 
\begin{equation*}
\sum_{\kappa\in\GT_{N+1}^+}{R_{u+1, u'+1}^{(N+1)}(\lambda, \kappa)T_{\mu|N+1}^*(\hatk_1, \ldots, \hatk_{N+1})}.
\end{equation*}
In view of Lemma $\ref{techlemma}$, this expression is
\begin{eqnarray*}
\left( \sum_{j=1}^{N+1}{\widetilde{m}_j(\widetilde{m}_j - b - u - u' - 3)} - \widetilde{c}_{N+1} \right) T_{\mu|N+1}^*(\hatl_1, \ldots, \hatl_{N+1})\\
+ \sum_{j=1}^{N+1}{(u - \widetilde{m}_j + 2)(u' - \widetilde{m}_j + 2)\mathbf{1}_{\{\mu_j - 1 \geq \mu_{j+1}\}}T_{\mu - \mathbf{e}_j | N+1}^*(\hatl_1, \ldots, \hatl_{N+1})},
\end{eqnarray*}
where $\widetilde{m}_j = \mu_j + N + 1 - j$ and $\widetilde{c}_{N+1}$ has the same definition as $c_{N+1}$ except that $u, u'$ are replaced by $u + 1, u' + 1$, respectively.

On the other hand, if we apply Lemma $\ref{techlemma}$ to the right-hand side, we obtain
\begin{eqnarray*}
\sum_{\kappa\in\GTp_N}\Lambda^{N+1}_N(\lambda, \kappa)\left\{\left( \sum_{j=1}^N{m_j(m_j - b - u - u' - 1)} - c_N \right)T^*_{\mu|N}(\hatk_1, \ldots, \hatk_N)\right.\\
+ \left.\sum_{j=1}^N{(u - m_j + 1)(u' - m_j + 1)\mathbf{1}_{\{\mu_j - 1 \geq \mu_{j+1}\}} T_{\mu - \mathbf{e}_j|N}^*(\hatk_1, \ldots, \hatk_N)}\right\}.
\end{eqnarray*}
Proposition $\ref{coherence}$ then shows the expressions are equal if
\begin{eqnarray*}
\sum_{j=1}^{N+1}{\widetilde{m}_j(\widetilde{m}_j - b - u - u' - 3)} - \widetilde{c}_{N+1} &=& \sum_{j=1}^N{m_j(m_j - b - u - u' - 1)} - c_N\\
(u - \widetilde{m}_j + 2)(u' - \widetilde{m}_j + 2) &=& (u - m_j + 1)(u' - m_j + 1), \hspace{.2in}j = 1, 2, \ldots, N,
\end{eqnarray*}
both of which are easily checked. The proof of Proposition $\ref{commratesprop}$ is finished, because of Lemma $\ref{easyeval}$ and the analysis above.

\subsection{From $q$-matrices to semigroups}

We finish the proof of Theorem $\ref{intertwiningthm}$ by lifting the infinitesimal version shown in Proposition $\ref{commratesprop}$. The idea of the proof is to prove the commutativity relation $P_{N+1}(t)\Lambda^{N+1}_N = \Lambda^{N+1}_NP_N(t)$ as an equality of operators $C_0(\GTp_N)\longrightarrow C_0(\GTp_{N+1})$: for any $N \in \N$, $N \geq 2$, and any $f\in C_0(\GTp_N)$, we prove 
\begin{equation}\label{desequality}
P_{N+1}(t)\Lambda^{N+1}_Nf = \Lambda^{N+1}_NP_N(t)f, \textrm{ for all } t\geq 0.
\end{equation}
Since finitely supported functions on $\GTp_N$ span a dense subspace of $C_0(\GTp_N)$, it will be sufficient to prove $(\ref{desequality})$ for all finitely supported functions $f$ on $\GTp_N$.
In order to do that, let $A$ be the operator $C_0(\GTp_{N+1})\longrightarrow C_0(\GTp_{N+1})$ associated to the $q$-matrix $R^{(N+1)}_{z+N, z'+N}$ (here $(z, z')$ is any complex pair satisfying the inequalities $(\ref{zparamsnoname})$). From Kolmogorov's backward differential equation and the infinitesimal master relation $(\ref{commratesprop})$, one can show that both sides of the desired equality $(\ref{desequality})$ satisfy the following linear ODE with values in the Banach space $C_0(\GTp_{N+1})$ and with certain initial condition:
\begin{eqnarray*}
    F'(t) &=& AF(t), \hspace{.2in}t\geq 0,\\
    F(0) &=& \Lambda^{N+1}_Nf.
\end{eqnarray*}
Finally, one can invoke a general theorem about uniqueness of solutions $F(t)$ to the above \textit{Cauchy problem} to finish the proof.

In \cite[Sub. 6.3]{BO2}, the reader can find a much more detailed explanation of the idea above, which applies to our situation. The only unclear point is the statement in the following lemma, which is a technical condition guaranteeing that we can apply the general result on uniqueness of solutions to a Cauchy problem.
\begin{lem}\label{techcauchy}
Let $N\geq 2$, let $f$ be a finitely supported function on $C_0(\GTp_N)$, and let $A$ be the operator $C_0(\GTp_{N+1})\longrightarrow C_0(\GTp_{N+1})$ associated to $R^{(N+1)} = R^{(N+1)}_{z+N, z'+N}$. Then
\begin{equation*}
A(\Lambda^{N+1}_Nf) \in C_0(\GTp_{N+1}).
\end{equation*}
\end{lem}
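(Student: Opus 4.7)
The strategy is to use Proposition~\ref{commratesprop} (with $(u,u') = (z+N-1, z'+N-1)$, so that $(u+1, u'+1) = (z+N, z'+N)$) to push $A$ through the kernel and obtain the identity
\begin{equation*}
A(\Lambda^{N+1}_N f) \;=\; \Lambda^{N+1}_N (R^{(N)} f),
\end{equation*}
where $R^{(N)} = R^{(N)}_{z+N-1,\, z'+N-1}$ is the $q$-matrix of the level-$N$ semigroup. Granting this identity, the rest is short. The matrix $R^{(N)}$ has at most $2N+1$ nonzero entries in each column: by $(\ref{Rexpression})$ the entry $R^{(N)}(\nu,\mu)$ vanishes unless $\sum_i |\nu_i - \mu_i| \leq 1$. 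Consequently $R^{(N)} f$ is finitely supported on $\GTp_N$, hence an element of $C_0(\GTp_N)$, and the Feller property of $\Lambda^{N+1}_N$ established in Proposition~\ref{pro1} then yields $\Lambda^{N+1}_N (R^{(N)} f) \in C_0(\GTp_{N+1})$, which is the desired conclusion.

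The displayed identity is checked by a direct double-sum manipulation, and the only thing to watch is that every sum that appears is in fact finite. For fixed $\lambda \in \GTp_{N+1}$, the row $R^{(N+1)}(\lambda, \cdot)$ is supported on at most $2(N+1)+1$ signatures, namely $\lambda$ and the valid $\lambda \pm \mathbf{e}_k$, while $\mathrm{supp}(f)$ is finite. Thus
\begin{equation*}
A(\Lambda^{N+1}_N f)(\lambda)
\;=\; \sum_\kappa \sum_{\mu \in \mathrm{supp}(f)} R^{(N+1)}(\lambda, \kappa)\, \Lambda^{N+1}_N(\kappa, \mu)\, f(\mu)
\end{equation*}
is a finite double sum and may be reordered to give $\sum_\mu (R^{(N+1)} \Lambda^{N+1}_N)(\lambda, \mu)\, f(\mu)$. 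Inserting the entrywise identity $R^{(N+1)} \Lambda^{N+1}_N = \Lambda^{N+1}_N R^{(N)}$ from Proposition~\ref{commratesprop} converts this into $\sum_\mu \sum_\nu \Lambda^{N+1}_N(\lambda, \nu)\, R^{(N)}(\nu, \mu)\, f(\mu)$; the inner sum over $\nu$ is again finite by the finite column-support of $R^{(N)}$, and a final swap collapses it to $\sum_\nu \Lambda^{N+1}_N(\lambda, \nu)\, (R^{(N)} f)(\nu) = \Lambda^{N+1}_N(R^{(N)} f)(\lambda)$.

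No real obstacle arises: the operator $A$ is defined pointwise on every function on $\GTp_{N+1}$ because $R^{(N+1)}$ has finite row support, so there is no delicate question about lying in the domain of a closed generator. The argument is essentially a bookkeeping reduction of the lemma to the already established infinitesimal master relation (Proposition~\ref{commratesprop}) and the Feller property of $\Lambda^{N+1}_N$ (Proposition~\ref{pro1}); once one verifies that the interchanges of summation are legitimate because each sum is finite, nothing further is needed.
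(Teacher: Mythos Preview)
Your proof is correct and takes a genuinely different route from the paper's. The paper proceeds by direct estimation: it reduces to $f=\delta_\mu$, writes out the sum $\sum_\nu R^{(N+1)}(\lambda,\nu)\Lambda^{N+1}_N(\nu,\mu)$ explicitly, and then combines the bound $|R^{(N+1)}(\lambda,\lambda\pm e_k)|\leq const\cdot\lambda_1^2$ with a sharpened kernel estimate $\Lambda^{N+1}_N(\lambda,\mu)\leq const\cdot\lambda_1^{-N-1}$ (improving on $(\ref{estimateLambda})$ via a finer Vandermonde bound that uses $\lambda_1-\lambda_3+2\geq const\cdot\lambda_1$). This yields decay of order $\lambda_1^{-N+1}$, which is where the restriction $N\geq 2$ enters.

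Your argument is cleaner: you invoke the already--proved infinitesimal master relation (Proposition~\ref{commratesprop}) to rewrite $A(\Lambda^{N+1}_Nf)=\Lambda^{N+1}_N(R^{(N)}f)$, observe that $R^{(N)}f$ is finitely supported, and finish with the Feller property of $\Lambda^{N+1}_N$ from Proposition~\ref{pro1}. There is no circularity, since both ingredients are established independently of Lemma~\ref{techcauchy}. A pleasant byproduct is that your argument never uses $N\geq 2$: Proposition~\ref{commratesprop} is stated for all $N\in\N$, so your proof covers $N=1$ as well---something the paper explicitly leaves open in the remark following the lemma. The paper's direct estimate, by contrast, has the advantage of being self-contained (it does not feed the master relation back into the machinery that will be used to lift it to semigroups), but at the cost of more work and a weaker conclusion.
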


\begin{proof}
It is convenient to name the constants $u = z+N$, $u' = z'+N$. Since $\Lambda^{N+1}_N$ and $A$ are linear operators, it is enough to prove the proposition when $f$ is the delta function $\delta_{\mu}$, i.e., we show $A(\Lambda^{N+1}_N\delta_{\mu})\in C_0(\GTp_{N+1})$, for any $\mu\in\GTp_N$. This statement is equivalent to
\begin{equation*}
\sum_{\nu\in\GTp_{N+1}}{R^{(N+1)}(\lambda, \nu)\Lambda^{N+1}_N(\nu, \mu)} \xrightarrow{\lambda\rightarrow\infty} 0.
\end{equation*}
Recall that $R^{(N+1)}(\lambda, \nu)\neq 0$ iff $\sum_k{|\lambda_k - \nu_k|} \in \{0, 1\}$, or equivalently iff $\nu = \lambda$ or $\nu = \lambda \pm e_k$ for some $k = 1, \ldots, N$, where $e_k = (0, \ldots, 0, 1, 0, \ldots, 0)$ has a $1$ in the $k$-th position and $0$'s elsewhere. Moreover
\begin{equation*}
R^{(N+1)}(\lambda, \lambda) = -\sum_{\substack{\nu\in\GTp_{N+1} \\ \nu \neq \lambda}}{R^{(N+1)}(\lambda, \nu)}.
\end{equation*}
The limit to prove becomes
\begin{gather}
\sum_{k=1}^{N+1}\left\{ \mathbf{1}_{\{\lambda_{k-1} \geq \lambda_k + 1\}}R^{(N+1)}(\lambda, \lambda + e_k)(\Lambda^{N+1}_N(\lambda + e_k, \mu) - \Lambda^{N+1}_N(\lambda, \mu))\right.\nonumber\\
\left. + \mathbf{1}_{\{\lambda_k - 1 \geq \lambda_{k+1}\}} R^{(N+1)}(\lambda, \lambda - e_k)(\Lambda^{N+1}_N(\lambda - e_k, \mu) - \Lambda^{N+1}_N(\lambda, \mu)) \right\}
\xrightarrow{\lambda\rightarrow\infty} 0,\label{toproveconvergence1}
\end{gather}
where we have used the conventions $\lambda_0 := +\infty$, $\lambda_{N+2} := 0$.

We claim that
\begin{equation}\label{fineestimate}
\Lambda^{N+1}_N(\lambda, \mu) \leq const\cdot \lambda_1^{-N-1},
\end{equation}
where $const$ is a positive constant, independent of $\lambda$. Assume for the moment the validity of this estimate. The expressions $(\ref{Rexpression})$ for $R^{(N)}$ show that $|R^{(N)}(\lambda, \lambda \pm e_k)| \leq const\cdot\lambda_1^2$, for all $k = 1, \ldots, N$, where $const$ depends on $N$ but not on $\lambda$. By combining this observation with estimate $(\ref{fineestimate})$, we have that the absolute value of the sum in $(\ref{toproveconvergence1})$ is upper bounded by $const\cdot \lambda_1^{-N+1}\leq const\cdot \lambda_1^{-1}$, for $N\geq 2$. Since $\lambda\rightarrow\infty$ is equivalent to $\lambda_1\rightarrow\infty$, then $(\ref{toproveconvergence1})$ follows.

We are left to prove the estimate $(\ref{fineestimate})$. In $(\ref{estimateLambda})$, we proved the weaker estimate $\Lambda^{N+1}_N(\lambda, \mu) \leq const\cdot \lambda_1^{-N}$, but it can be improved if we use a finer estimate for the Vandermonde $\Delta_{N+1}(\hatl)$ when $N\geq 2$.  We have
\begin{gather*}
\Delta_{N+1}(\hatl) = \prod_{i<j}{(\hatl_i - \hatl_j)} = \prod_{i<j}{(l_i - l_j)(l_i + l_j + 2\epsilon)}\\
\geq (\lambda_1 - \lambda_2 + 1)(\lambda_1 - \lambda_3 + 2)\prod_{j=2}^{N+1}{(\lambda_1 + \lambda_j + 2\epsilon)}\geq const\cdot(\lambda_1 - \lambda_2 + 1)\lambda_1^{N+1},
\end{gather*}
where we have used that $\lambda \succ_{BC} \mu$ implies $\mu_1 \geq \lambda_3$ and so $\lambda_1 - \lambda_3 + 2 \geq const\cdot \lambda_1$.

By following the same steps as in the proof of $(\ref{estimateLambda})$, with the finer estimate on $\Delta_{N+1}(\hatl)$, we obtain the desired $\Lambda^{N+1}_N(\lambda, \mu) \leq const\cdot \lambda_1^{-N-1}$.
\end{proof}

\begin{rem}
We believe Lemma $\ref{techcauchy}$ holds also for $N = 1$ (and consequently so would Theorem $\ref{intertwiningthm}$). If we consider $(\lambda_1, \lambda_2)\in\GTp_2$, $(\mu)\in\GTp_1$, the lemma for $N = 1$ is equivalent to the convergence of the sum
\begin{gather*}
\beta(\lambda_1 + 1)\frac{(\lambda_1 + 2 + \epsilon)^2 - (\lambda_2 + \epsilon)^2}{(\lambda_1 + 1 + \epsilon)^2 - (\lambda_2 + \epsilon)^2} \left\{ \Lambda^2_1((\lambda_1+1, \lambda_2), \mu) - \Lambda^2_1((\lambda_1, \lambda_2), \mu) \right\}\\
+ \mathbf{1}_{\{\lambda_1 - 1 \geq \lambda_2\}}\delta(\lambda_1 + 1)\frac{(\lambda_1 + \epsilon)^2 - (\lambda_2 + \epsilon)^2}{(\lambda_1 + 1 + \epsilon)^2 - (\lambda_2 + \epsilon)^2} \left\{ \Lambda^2_1((\lambda_1-1, \lambda_2), \mu) - \Lambda^2_1((\lambda_1, \lambda_2), \mu) \right\}\\
+\mathbf{1}_{\{\lambda_1 \geq \lambda_2+1\}}\beta(\lambda_2)\frac{(\lambda_1 + 1 + \epsilon)^2 - (\lambda_2 + 1+ \epsilon)^2}{(\lambda_1 + 1 + \epsilon)^2 - (\lambda_2 + \epsilon)^2} \left\{ \Lambda^2_1((\lambda_1, \lambda_2 + 1), \mu) - \Lambda^2_1((\lambda_1, \lambda_2), \mu) \right\}\\
+\mathbf{1}_{\{\lambda_2 - 1\geq 0\}}\delta(\lambda_2)\frac{(\lambda_1 + 1 + \epsilon)^2 - (\lambda_2 - 1+ \epsilon)^2}{(\lambda_1 + 1 + \epsilon)^2 - (\lambda_2 + \epsilon)^2} \left\{ \Lambda^2_1((\lambda_1, \lambda_2 - 1), \mu) - \Lambda^2_1((\lambda_1, \lambda_2), \mu) \right\}
\end{gather*}
to $0$, as $\lambda\longrightarrow\infty$. We even have the following explicit expression for the links
\begin{equation*}
\Lambda^2_1((\lambda_1, \lambda_2), (\mu)) = \frac{2(a+1)\sum_{\lambda_1 \geq n\geq \max\{\mu, \lambda_2\}}{B(n, \mu)}}{(\lambda_1 - \lambda_2 + 1)(\lambda_1 + \lambda_2 + 1 + 2\epsilon)}.
\end{equation*}
The difficulty is that many cases arise for the sequence $\{(\lambda_1(k), \lambda_2(k))\}_{k=1, 2, \ldots}$ such that $\lambda_1(k)\xrightarrow{k\rightarrow\infty}\infty$, and we also need to treat the cases $\mu\neq 0$ and $\mu = 0$ separately. The author has verified the desired convergence when $\lambda_1(k) > \lambda_2(k) > \mu > 0$ for all $k$ sufficiently large, but the proof is tedious. Since proving the master relation for $N = 1$ does not strengthen our main result, we have chosen to leave it out.
\end{rem}

\section{Invariance of the $z$-measures}\label{zmeasuressubsection}

In this section, we use the semigroups $(P_N(t))_{t\geq 0}$ and matrices of transition rates $R^{(N)}$, which are defined using the parameters $(u, u') = (z + N - 1, z' + N - 1)$, as in Section $\ref{commuting}$. Here, we restrict even further the space of pairs $(z, z')$ that we use: the $z$-measures exist for pairs $(z, z')$ in a subset $\mathcal{H}\subset\mathcal{H}^{\circ}$, which will be defined shortly.

\subsection{Space of parameters $(z, z')$}

Define the following sets:
\begin{eqnarray*}
\mathcal{Z} &\myeq& \mathcal{Z}_{princ}\sqcup\mathcal{Z}_{compl}\sqcup\mathcal{Z}_{degen}\\
\mathcal{Z}_{princ} &\myeq& \{(z, z')\in\C^2 \setminus \R^2 : z' = \overline{z}\}\\
\mathcal{Z}_{compl} &\myeq& \{(z, z')\in\R^2 : \exists \ m\in\Z,\ m<z, z'<m+1\}\\
\mathcal{Z}_{degen} &\myeq& \bigsqcup_{m\in\N}{\mathcal{Z}_{degen, m}}\\
\mathcal{Z}_{degen, m} &\myeq& \{(z, z')\in\R^2 : z=m, z'>m-1 \textrm{ or }z'=m, z>m-1\}.
\end{eqnarray*}
Observe that $(z, z')\in\mathcal{Z}$ implies that $z+z'\in\R$. Moreover we have
\begin{prop}[\cite{Ol2}, Lemma 7.9]\label{gammaineqs}
Let $(z, z')\in\C^2$, then
\begin{itemize}
	\item $(\Gamma(z-k)\Gamma(z'-k))^{-1} \geq 0$ for all $k\in\Z$ iff $(z, z')\in\mathcal{Z}$.
	\item $(\Gamma(z-k)\Gamma(z'-k))^{-1} > 0$ for all $k\in\Z$ iff $(z, z')\in\mathcal{Z}_{princ}\sqcup\mathcal{Z}_{compl}$.
	\item If $(z, z')\in\mathcal{Z}_{degen, m}$, $m\in\N$, then $(\Gamma(z-k)\Gamma(z'-k))^{-1} = 0$ for $k \geq m$ and $(\Gamma(z-k)\Gamma(z'-k))^{-1} > 0$ for $k<m$.
\end{itemize}
\end{prop}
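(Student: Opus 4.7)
The plan is to set $f(k) := (\Gamma(z-k)\Gamma(z'-k))^{-1}$ and exploit two standard facts about the reciprocal Gamma function: it is entire on $\C$, vanishes precisely on $\{0,-1,-2,\ldots\}$, and on $\R\setminus\Z$ its sign is constant on each open unit interval $(m,m+1)$, $m\in\Z$. The functional equation $\Gamma(w+1)=w\Gamma(w)$ yields the telescoping recurrence
\begin{equation*}
f(k+1)\;=\;(z-k-1)(z'-k-1)\,f(k),\qquad k\in\Z,
\end{equation*}
which will drive the entire argument.

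The ``if'' directions I dispatch by case analysis. If $(z,z')\in\mathcal{Z}_{princ}$, then $\Gamma(z-k)\Gamma(z'-k)=|\Gamma(z-k)|^2$ is strictly positive for every $k$, since $z\notin\R$ keeps $z-k$ away from the poles of $\Gamma$. If $(z,z')\in\mathcal{Z}_{compl}$ with $z,z'\in(m,m+1)$, then $z-k$ and $z'-k$ both lie in the single unit interval $(m-k,m-k+1)$, so $\Gamma(z-k)$ and $\Gamma(z'-k)$ share the same nonzero sign and their product is positive. Finally, if $(z,z')\in\mathcal{Z}_{degen,m}$ with $z=m\in\N$ and $z'>m-1$, then for $k\geq m$ the factor $1/\Gamma(z-k)$ vanishes and $f(k)=0$, while for $k<m$ both $z-k>0$ and $z'-k>m-1-k\geq 0$ hold, giving $\Gamma(z-k),\Gamma(z'-k)>0$ and $f(k)>0$. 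This simultaneously proves the third bullet and the ``if'' part of the first bullet along the degenerate series.

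For the ``only if'' direction I assume $f(k)\geq 0$ for every $k\in\Z$. For $k$ sufficiently negative, $\Gamma(z-k)$ and $\Gamma(z'-k)$ are both finite and nonzero, so $f(k)>0$; applying the telescoping relation to two consecutive such $k$ forces $(z-j)(z'-j)\in\R_{\geq 0}$ for every sufficiently negative integer $j$. Writing $z=a+bi$ and $z'=a'+b'i$ and expanding the imaginary part as an affine function of $j$, the existence of infinitely many vanishings gives $b+b'=0$ and $b'(a-a')=0$, whence either $z'=\overline{z}$ (putting $(z,z')\in\mathcal{Z}_{princ}$, once the real sub-case below is treated) or $z,z'\in\R$. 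In the real case I push the telescoping through every $j\in\Z$: the requirement $(z-j)(z'-j)\geq 0$ at each integer $j$ not causing $f$ to vanish forces $z,z'$ to lie in a common open unit interval, which gives $\mathcal{Z}_{compl}$; if instead some $(z-j_0)(z'-j_0)=0$, a sharper inspection of the recurrence on both sides of $j_0$ pins down $z\in\N$ with $z'>z-1$ (or the symmetric arrangement), giving $\mathcal{Z}_{degen}$.

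The main obstacle I anticipate is this last step: cleanly separating the complementary series from the degenerate series and ruling out the spurious situations (such as one of $z,z'$ being zero or a negative integer) in which the telescoping accidentally preserves nonnegativity on one side of the would-be pole but cannot be patched together globally. My way of handling it is to single out the smallest integer $j_0$ at which $\Gamma(z-j_0)^{-1}$ or $\Gamma(z'-j_0)^{-1}$ vanishes, and to read off the compatibility constraints on the companion variable from the signs of $f(j_0-1)$ and from the telescoping identity at $j_0-1$. All remaining steps are routine given this boundary analysis and the sign structure of $1/\Gamma$ recorded at the outset.
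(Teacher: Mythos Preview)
The paper does not give its own proof of this proposition; it is quoted verbatim from \cite[Lemma~7.9]{Ol2}. Your argument via the recurrence $f(k+1)=(z-k-1)(z'-k-1)f(k)$ together with the sign pattern of $1/\Gamma$ on real unit intervals is precisely the standard route, and your treatment of the ``if'' directions and of the third bullet is clean and correct.

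There is, however, a real gap in your ``only if'' sketch for the first bullet, and it is exactly the obstacle you flagged but did not actually overcome. Your claim that singling out the smallest $j_0$ with $f(j_0)=0$ ``pins down $z\in\N$'' is false as stated. Take $z=0$ and any $z'\in(-1,0)$: for $k\geq 0$ one has $1/\Gamma(-k)=0$, so $f(k)=0$; for $k\leq -1$ both $-k\geq 1$ and $z'-k>0$, so $f(k)=(\Gamma(-k)\Gamma(z'-k))^{-1}>0$. Hence $f(k)\geq 0$ for every $k\in\Z$, yet $(0,z')$ lies in none of $\mathcal{Z}_{princ}$, $\mathcal{Z}_{compl}$, $\mathcal{Z}_{degen}$ as defined in this paper (the last requires $m\in\N$, and $0\notin\N$ here). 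The same phenomenon occurs with any nonpositive integer in place of $0$. So the boundary analysis you outline cannot force $z\in\N$; it only forces $z\in\Z$ with $z'>z-1$ (or the symmetric statement).

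This is really a transcription artifact: in \cite{Ol2} the degenerate series is set up so that these integer boundary pairs are accounted for, whereas the present paper restricts to $m\in\N$. For a rigorous write-up you should either check the exact formulation in \cite{Ol2} and prove that version, or state explicitly that the ``only if'' in the first bullet holds provided $\mathcal{Z}_{degen}$ is enlarged to allow $m\in\Z$. Since the only use of this proposition in the paper is the strict-positivity direction of the second bullet (through $\mathcal{H}\subset\mathcal{Z}_{princ}\sqcup\mathcal{Z}_{compl}$), which your argument handles completely, the discrepancy is harmless for the paper's applications.
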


\begin{df}\label{Hdef}
We define $\mathcal{H}$ as the space of pairs $(z, z')\in\C^2$ such that both $(z, z')$ and $(z+2\epsilon, z'+2\epsilon)$ belong to $\mathcal{Z}_{princ}\sqcup\mathcal{Z}_{compl}$ and moreover $z+z'+b>-1$.
\end{df}

Observe that all pairs $(z, z')\in\mathcal{H}$ satisfy the inequalities in $(\ref{zparamsnoname})$. Consequently the master relation, Theorem $\ref{intertwiningthm}$, holds if we define the Feller semigroups $(P_N(t))_{t\geq 0}$ using a pair $(z, z')\in\mathcal{H}$, as described in the previous section. The method of intertwiners, Theorem $\ref{methodintertwiners}$, then shows there exists a unique Feller semigroup $(P_{\infty}(t))_{t \geq 0}$ on $\Omega_{\infty}$ satisfying the relations $(\ref{commutativity2})$. The Feller semigroups $(P_{\infty}(t))_{t \geq 0}$ depend on the parameters $z, z'$, as well as the real parameters $a \geq b \geq -1/2$; for convenience, we omit such dependence from the notation.

To finish the proof of the main theorem stated in the introduction, we define below the $z$-measures associated to a tuple $(z, z', a, b)$, $(z, z')\in\mathcal{H}$, $a \geq b \geq -\frac{1}{2}$, and prove that they are the unique invariant measures of the semigroups $(P_{\infty}(t))_{t\geq 0}$.

\begin{rem}
\normalfont Below we define the $z$-measures for $(z, z')\in\mathcal{H}$, but it is also possible to define them for degenerate pairs $(z, z')\in\mathcal{Z}_{degen}$. One can define Markov chains on $\GTp_N$ that preserve the pushforwards of the ``degenerate'' $z$-measures. We believe these Markov chains have a limit Markov process (living in a finite-dimensional subspace of $\Omega_{\infty}$) which is a time-dependent determinantal point process. Techniques from \cite{Gor} probably can prove this conjecture. However, we have restricted ourselves to study the more complicated dynamics in the non-degenerate case $(z, z')\in\mathcal{H}$.
\end{rem}

\subsection{$z$-measures on the BC branching graph}

The \textit{spectral $z$-measure} (or simply \textit{$z$-measure}) associated to $(z, z')\in\mathcal{H}$ (and the pair $(a \geq b\geq -1/2)$) is certain probability measure $M_{z, z', a, b | \infty}$ on the boundary $\Omega_{\infty}$ of the BC branching graph. By virtue of the bijection $(\ref{themap})$, we can define it by giving a coherent system of probability measures $\{M_{z, z', a, b | N}\}_{N = 1, 2, \ldots}$ on the levels $\GTp_N$ of the BC branching graph. Let us proceed with this approach; for $(z, z')\in\mathcal{H}$, define
\begin{equation}\label{zmeasureN}
M_{z, z', a, b|N}(\lambda) = C_{z, z', a, b|N}^{-1} \prod_{1\leq i < j \leq N}{(\hatl_i - \hatl_j)^2}\cdot\prod_{i=1}^N{W_{z, z', a, b|N}(l_i)}, \hspace{.1in} \lambda\in\GTp_N,
\end{equation}
where
\begin{equation}\label{Wexpression}
\begin{gathered}
W_{z, z', a, b|N}(x) = (x + \epsilon)\frac{\Gamma(x + 2\epsilon)\Gamma(x + a + 1 )}{\Gamma(x + b + 1)\Gamma(x + 1)}\\
\times\frac{1}{\Gamma(z - x + N)\Gamma(z' - x + N)\Gamma(z + x + N + 2\epsilon)\Gamma(z' + x + N + 2\epsilon)},
\end{gathered}
\end{equation}

\begin{equation}\label{Cexpression}
C_{z, z', a, b|N} = \prod_{i=1}^N{\frac{\Gamma(a+i)\Gamma(b+z+z'+i)\Gamma(i)}{\Gamma(z+i)\Gamma(z'+i)\Gamma(z+b+i)\Gamma(z'+b+i)\Gamma(z+z'+N+a+b+i)}}.
\end{equation}

With the restrictions on the parameters $z, z', a, b$ in place, the expressions $W_{z, z', a, b|N}(x)$, $x\in\Zp$, $C_{z, z', a, b|N}^{-1}$ can be shown to be well-defined. For example, we can argue that the equality $z + b = -1$ that would make the formula for $C_{z, z', a, b | N}^{-1}$ ill-defined does not occur. Indeed if $z+b=-1$, then $z<0$ and $(z, z')\in\mathcal{Z}_{compl}$; consequently $z' < 0$ and so $z+z'+b<z+b=-1$, a contradiction. Similarly one gets rid of the unwanted scenarios $z+b=-n$ and $z'+b=-n$, for some $n\in\Zp$.
Therefore $M_{z, z', a, b|N}(\lambda)$ is also well-defined for any $\lambda\in\GTp_N$.

Note that the formula above also defines $z$-measures for all $a, b>-1$, but the main theorem of this paper requires $a\geq b\geq -1/2$, so we do not need to consider them in full generality.

\begin{thm}
The expressions $M_{z, z', a, b|N}$, with $(z, z')\in\mathcal{H}$, define a probability measure on $\GTp_N$, which assigns strictly positive probabilities to all elements of $\GTp_N$. They are consistent with the kernels $\Lambda^{N+1}_N$ in the sense that
\begin{equation}\label{coherencez}
M_{z, z', a, b|N}(\mu) = \sum_{\lambda\in\GTp_{N+1}}{M_{z, z', a, b|N+1}(\lambda)\Lambda^{N+1}_N(\lambda, \mu)}, \textrm{ for any } \mu\in\GTp_N.
\end{equation}
Therefore the coherent system $\{M_{z, z', a, b|N}\}_{N\geq 1}$ determines a probability measure on $\Omega_{\infty}$, due to $(\ref{tobeiso})$, that we call the \textit{spectral $z$-measure $M_{z, z', a, b|\infty}$ associated to $(z, z', a, b)$}.
\end{thm}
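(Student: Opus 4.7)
My proof plan splits into two main parts: verifying that $M_{z, z', a, b | N}$ is a strictly positive probability measure, and proving the coherence $(\ref{coherencez})$.

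\textbf{Positivity and total mass.} Fix $\lambda \in \GTp_N$. The Vandermonde factor $\prod_{i<j}(\hatl_i - \hatl_j)^2$ in $(\ref{zmeasureN})$ is strictly positive because the $l_i$'s are distinct nonnegative integers and $\epsilon \geq 0$ makes $t \mapsto (t + \epsilon)^2$ strictly increasing on $\R_{\geq 0}$. For the weight $W_{z, z', a, b|N}(l)$, the factors depending only on $a, b$ are positive since $a \geq b > -1$, and the combination $(l+\epsilon)\Gamma(l+2\epsilon)$ is well-defined and positive for all $l \in \Zp$, after interpreting the indeterminate form at $l = 0, \epsilon = 0$ via the standard limit $t\Gamma(t) \to 1$ as $t \to 0^+$. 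The two remaining $z,z'$-dependent factors are handled by Proposition $\ref{gammaineqs}$: with $k = l - N \in \Z$ and $(z, z') \in \mathcal{Z}_{princ}\sqcup\mathcal{Z}_{compl}$ (a consequence of $(z, z') \in \mathcal{H}$), one has $(\Gamma(z - l + N)\Gamma(z' - l + N))^{-1} > 0$; analogously, applying Proposition $\ref{gammaineqs}$ to the pair $(z+2\epsilon, z'+2\epsilon) \in \mathcal{Z}_{princ}\sqcup\mathcal{Z}_{compl}$ with $k = -l - N \in \Z$ gives $(\Gamma(z+l+N+2\epsilon)\Gamma(z'+l+N+2\epsilon))^{-1} > 0$. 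For the total mass $\sum_\lambda M_{z,z',a,b|N}(\lambda) = 1$, I would establish the discrete Selberg-type evaluation
\[
\sum_{l \in \Omega_N} \prod_{i<j}(\hatl_i - \hatl_j)^2 \prod_{i=1}^N W_{z,z',a,b|N}(l_i) = C_{z,z',a,b|N}
\]
by recognizing $W$ as the orthogonality weight on $\Zp^{\epsilon}$ for the Wilson--Neretin polynomials and combining the classical $L^2$-norm formula for Wilson polynomials with a Heine-type determinantal identity for the Vandermonde-weighted sum.

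\textbf{Coherence via a generating function.} To prove $(\ref{coherencez})$, I would introduce
\[
F_N(x_1, \ldots, x_N) \myeq \sum_{\lambda \in \GTp_N} M_{z, z', a, b | N}(\lambda)\, \Phi_\lambda(x_1, \ldots, x_N \mid a, b)
\]
and establish the factorization $F_N(x_1, \ldots, x_N) = \prod_{i=1}^N \Psi_{z, z', a, b}(x_i)$ for an explicit function $\Psi_{z, z', a, b} \colon [-1, 1] \to \R$ with $\Psi_{z, z', a, b}(1) = 1$. Granting this, specializing $x_{N+1} = 1$ in $F_{N+1}$ and invoking the branching relation $(\ref{linksdef})$ yields
\[
\prod_{i=1}^N \Psi_{z, z', a, b}(x_i) = F_{N+1}(x_1, \ldots, x_N, 1) = \sum_{\mu \in \GTp_N} \Big( \sum_{\lambda \in \GTp_{N+1}} M_{z,z',a,b|N+1}(\lambda)\,\Lambda^{N+1}_N(\lambda, \mu) \Big) \Phi_\mu(x_1, \ldots, x_N),
\]
and matching coefficients against the basis $\{\Phi_\mu\}_{\mu \in \GTp_N}$ of symmetric polynomials yields exactly $(\ref{coherencez})$.

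\textbf{Main obstacle.} The principal difficulty is establishing the factorization $F_N = \prod_i \Psi_{z, z', a, b}$. For $N = 1$ this amounts to a Poisson-type identity recognizing $\sum_{l \geq 0} W_{z,z',a,b|1}(l)\, \Phi_l(x \mid a, b)$ as a specific hypergeometric generating function for univariate Jacobi polynomials, which can be verified directly from the series expansion of $\Phi_l(x \mid a,b)$ together with the $L^2$-norm formula for $\mathfrak{P}_l$. For general $N$, the factorization is reduced to the $N = 1$ case using the determinantal form of $M_N$ combined with a multivariate Cauchy-type identity for $\mathfrak{P}_\lambda$ and the evaluation $(\ref{evalones})$; this reduction parallels the branching rule and ties the coherent system $\{M_{z,z',a,b|N}\}_N$ to the harmonic-analysis interpretation of $z$-measures discussed in Appendix $\ref{sec:zmeasuresharmonic}$, thereby delivering $M_{z,z',a,b|\infty}$ via the bijection $(\ref{tobeiso})$.
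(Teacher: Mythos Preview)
Your positivity argument matches the paper's essentially verbatim, but your plan for the total mass and for the coherence relation $(\ref{coherencez})$ diverges from the paper's proof in a substantive way.

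The paper does \emph{not} prove the Selberg-type evaluation or the coherence relation from scratch. Instead it invokes \cite{OlOs}, where both identities are established for the restricted range $\Re z, \Re z' > -(1+b)/2$, and then extends them to all of $\mathcal{H}$ by \emph{analytic continuation} in $(z,z')$. The technical work in the paper's proof is entirely devoted to justifying this continuation: bounding $|W_{z,z',a,b|N}(x)|$ by $const\cdot(1+x)^{1-2\Re\Sigma-4N}$ and deducing that $\sum_{\lambda} M'_{z,z',a,b|N}(\lambda)$ converges absolutely (hence defines a holomorphic function) on the domain $\{\Re(z+z'+b)>-1\}$; the same estimate, together with $\Lambda^{N+1}_N(\lambda,\mu)\in[0,1]$, handles the right-hand side of $(\ref{coherencez})$.

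Your generating-function route---showing $F_N(x_1,\dots,x_N)=\prod_i\Psi_{z,z',a,b}(x_i)$ via a Poisson-type kernel identity for Jacobi polynomials and a Heine/Cauchy reduction---is essentially the argument carried out \emph{inside} \cite{OlOs} (cf.\ also the discussion around $(\ref{eqn:sphericalvector})$ and $(\ref{eqn:coeffsz})$ in Appendix~\ref{sec:zmeasuresharmonic}). That argument, however, relies on $f_{z|N}=\prod_i(1+x_i)^{z}$ lying in the weighted $L^2$ space $H_N^{a,b}$, which forces $\Re z>-(1+b)/2$; the explicit function $\Psi_{z,z',a,b}$ you posit does not exist as a bona fide element of $L^2([-1,1],\mathfrak m)$ for general $(z,z')\in\mathcal H$. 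So your plan, as stated, would only cover the same restricted strip that \cite{OlOs} does, and you would still need the analytic-continuation step---precisely the ingredient the paper supplies and your proposal omits. Adding that step (with the absolute-convergence estimates above) would close the gap; without it, the argument is incomplete for the full parameter set $\mathcal H$.
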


\begin{proof}
By virtue of Proposition $\ref{gammaineqs}$, we have $M_{z, z', a, b|N}(\lambda) > 0$ for any $\lambda\in\GTp_N$, whenever $(z, z')\in\mathcal{H}$. Thus the expression $(\ref{zmeasureN})$ defines a probability measure on $\GTp_N$ as long as
\begin{equation}\label{finitemeasure}
C_{z, z', a, b|N} = \sum_{\lambda\in\GTp_N}{M'_{z, z', a, b|N}(\lambda)},
\end{equation}
where
\begin{equation*}
M'_{z, z', a, b|N}(\lambda) = \prod_{1\leq i < j\leq N}{(\hatl_i - \hatl_j)^2}\cdot\prod_{i=1}^N{W_{z, z', a, b|N}(l_i)}.
\end{equation*}
We actually prove $(\ref{finitemeasure})$ for all pairs $(z, z')$ belonging to the domain $U \myeq \{(z, z')\in\C^2 : \Re(z+z'+b) > -1\}$. The equality was proved in \cite{OlOs}, for the special case when $\Re z, \Re z' > -(1+b)/2$. We extend the equality by analytic continuation. All we need to show is that both sides of the desired equality $(\ref{finitemeasure})$ are analytic for $(z, z')\in U$. The expression $C_{z, z', a, b|N}$ is clearly analytic on $U$, so it only remains to prove that the sum in the right-hand side of $(\ref{finitemeasure})$ converges absolutely on $U$.

Due to Euler's reflection formula $\Gamma(z)\Gamma(1 - z) = \frac{\pi}{\sin(\pi z)}$, see \cite[Thm 1.2.1]{AAR}, for large $x\in\Zp$ we can express $W(x) = W_{z, z', a, b|N}(x)$ as
\begin{gather*}
W(x) = \frac{(x+\epsilon)\sin(\pi z)\sin(\pi z')}{\pi^2}\cdot\frac{\Gamma(x+a+b+1)\Gamma(x+a+1)}{\Gamma(x+b+1)\Gamma(x+1)}\\
\times\frac{\Gamma(x+1-z-N)\Gamma(x+1-z'-N)}{\Gamma(x+z+N+2\epsilon)\Gamma(x+z'+N+2\epsilon)}.
\end{gather*}
The well known asymptotics of Gamma functions yield
\begin{equation}\label{familiarasymp}
|W(x)| \leq const\cdot (1 + x)^{1 - 2\Re\Sigma - 4N}, \forall \ x\in\Zp,
\end{equation}
where we denoted $\Sigma = z+z'+b$. On the other hand, we clearly have
\begin{equation*}
\prod_{1\leq i<j\leq N}{(\hatl_i - \hatl_j)^2} \leq \hatl_1^{2N-2}\cdots\hatl_{N-1}^2.
\end{equation*}
It follows that the sum $\sum_{\lambda\in\GTp_N}{|M'_{z, z', a, b|N}(\lambda)|}$ is upper bounded by
\begin{gather}
const \sum_{l_1, \ldots, l_N\in\Zp}{\hatl_1^{2N-2}\cdots\hatl_{N-1}^{2}((1+l_1)\cdots(1+l_N))^{1-2\Re\Sigma-4N}}\nonumber\\
\leq const \sum_{l_1, \ldots, l_N\in\Zp}{\hatl_1^{2N-2}\cdots\hatl_{N-1}^{2N-2}((1+l_1)\cdots(1+l_N))^{1-2\Re\Sigma-4N}}\nonumber\\
= const \left( \sum_{l\in\Zp}{\hatl^{2N-2} (1+l)^{1-2\Re\Sigma-4N}} \right)^N = const \left( \sum_{l\in\Zp}{(l+\epsilon)^{4N-4} (1+l)^{1-2\Re\Sigma-4N}} \right)^N \label{tobebounded}
\end{gather}

For large $l\in\Zp$, we have $l < 1+l < 2l$ and $l < l+\epsilon < 2l$. On the other hand, our assumption $\Re\Sigma > -1$ implies the convergence of $\sum_{l\in\Zp}{l^{(4N - 4)+(1-2\Re\Sigma -4N)}} = \sum_{l\in\Zp}{l^{-3-2\Re\Sigma}}$. It follows that $(\ref{tobebounded})$ is bounded and therefore $\sum_{\lambda\in\GTp_N}{M'_{z, z', a, b|N}(\lambda)}$ is absolutely convergent, as desired.

Next, we prove the coherence relation $(\ref{coherencez})$, for all pairs in the domain $(z, z')\in U' \myeq \{(z, z')\in\C^2 : \Re (z + z' + b) > -1\}\cap\{z \neq -1, -2, \dots\}\cap\{z' \neq -1, -2, \dots\}\cap\{z + b \neq -1, -2, \dots\}\cap\{z' + b \neq -1, -2, \dots\}$.
This is stronger than the statement we wanted to show because clearly $\mathcal{H} \subset U'$.

In the case that $\Re z, \Re z' > -\frac{(1+b)}{2}$, the coherence relation $(\ref{coherencez})$ is the main result of \cite{OlOs}. By the principle of analytic continuation, we need to show that both sides of $(\ref{coherencez})$ are holomorphic on $U'$. The left side is clearly holomorphic on $U'$, by inspecting $(\ref{zmeasureN})$. As for the right side of $(\ref{coherencez})$, we need to show that the sum below is absolutely convergent
\begin{equation*}
\sum_{\lambda\in\GTp_{N+1}}{M_{z, z', a, b|N+1}(\lambda)\Lambda^{N+1}_N(\lambda, \mu)},
\end{equation*}
on $U'$, for any $\mu\in\GTp_N$.

This follows easily from (a) $\Lambda^{N+1}_N(\lambda, \mu)\in [0, 1]$, (b) the analysis in the previous paragraphs which shows that the sum $\sum_{\lambda\in\GTp_{N+1}}{M'_{z, z', a, b|N+1}(\lambda)}$ is absolutely convergent on $U \supset U'$, and (c) the fact that $C_{z, z', a, b|N+1}^{-1}$ is holomorphic on $U'$, see the few words after equation $(\ref{Cexpression})$ for a brief explanation.
\end{proof}

\subsection{Wilson-Neretin hypergeometric orthogonal polynomials}\label{neretinsub}

We review the theory of a family of hypergeometric orthogonal polynomials whose weight has the same form as $W_{z, z', a, b|N}$. The result below was proved in \cite{N}, but we follow the exposition in \cite[Section 8]{BO4}.

Take arbitrary complex numbers $a_1, a_2, a_3, a_4$ and $\alpha$, and consider the function
\begin{equation}\label{weightneretin}
w(t | a_1, a_2, a_3, a_4; \alpha) \myeq \frac{t+\alpha}{\prod_{j=1}^4{\Gamma(a_j + \alpha + t)\Gamma(a_j - \alpha - t)}}, \hspace{.1in} t\in\Z,
\end{equation}
which is treated as a weight function on the quadratic lattice $\{\ldots, (-1+\alpha)^2, \alpha^2, (1+\alpha)^2, \ldots\}$.

\begin{prop}\label{orthpolyinfo}
The polynomials
\begin{eqnarray*}
&&Q_n((t+\alpha)^2) = \frac{\Gamma(2-a_1-a_2+n)\Gamma(2-a_1-a_3+n)\Gamma(2-a_1-a_4+n)}{\Gamma(2-a_1-a_2)\Gamma(2-a_1-a_3)\Gamma(2-a_1-a_4)}\\
&&\times\pFq{4}{3}{-n,,, n+3-a_1-a_2-a_3-a_4 ,,, 1-a_1+t+\alpha ,,, 1-a_1-t-\alpha}{2-a_1-a_2 ,,, 2-a_1-a_3 ,,, 2-a_1-a_4}{1}
\end{eqnarray*}
are orthogonal with respect to the weight $w(t)$ in $(\ref{weightneretin})$, i.e.,
\begin{equation*}
    \sum_{k=-\infty}^{\infty}{Q_m((k + \alpha)^2)Q_n((k + \alpha)^2)w(k)} = 0, \hspace{.1in}\textrm{ for } m\neq n.
\end{equation*}
\end{prop}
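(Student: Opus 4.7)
My plan is to follow the classical ``self-adjoint difference operator'' argument that underlies all orthogonality proofs in the Askey--Wilson scheme. Concretely, I would produce a second-order difference operator $\mathcal{L}$ acting on functions of $t \in \Z$, show that $\mathcal{L}Q_n = \lambda_n Q_n$ with eigenvalues $\lambda_n$ separating $n$, and prove self-adjointness of $\mathcal{L}$ with respect to the weighted sum $\langle f,g\rangle \myeq \sum_{k\in\Z} f(k)g(k)w(k)$. Orthogonality is then immediate from $(\lambda_m - \lambda_n)\langle Q_m,Q_n\rangle = 0$ combined with the elementary observation that $\lambda_m \neq \lambda_n$ for $m \neq n$ in generic parameters.

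The operator $\mathcal{L}$ is built from a Pearson-type identity for the weight. A direct computation using $\Gamma(x+1)=x\Gamma(x)$ yields
\begin{equation*}
\frac{w(t+1)}{w(t)} = \frac{t+1+\alpha}{t+\alpha}\prod_{j=1}^{4}\frac{a_j-\alpha-t-1}{a_j+\alpha+t},
\end{equation*}
a rational function of $t$ of the typical ``quartic-over-quadratic'' shape. This determines rational functions $A(t), B(t)$ satisfying the detailed-balance identity $A(t)w(t) = B(t+1)w(t+1)$, and I take $(\mathcal{L}f)(t) \myeq A(t)(f(t+1)-f(t)) + B(t)(f(t-1)-f(t))$. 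By construction and the Pearson identity, $\mathcal{L}$ preserves the space of polynomials in $(t+\alpha)^2$; inspecting leading terms shows that on $(t+\alpha)^{2n}$ it acts as $\lambda_n = n(n+3-a_1-a_2-a_3-a_4)$ modulo lower-order corrections. Triangularity of $\mathcal{L}$ in the monomial basis $\{(t+\alpha)^{2k}\}_{k\geq 0}$, together with distinctness of the $\lambda_n$, forces the polynomial $\lambda_n$-eigenspace to be one-dimensional; to identify it with the explicit $Q_n$ above it then suffices to verify the eigenequation on the terminating ${}_4F_3(\,\cdot\,;1)$ series, which follows from a standard contiguous relation of the $_4F_3$: the shifts $(t+\alpha)\mapsto(t+\alpha)\pm 1$ inside the arguments $1-a_1\pm(t+\alpha)$ rearrange the summation index and reproduce the same series up to the scalar $\lambda_n$.

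Self-adjointness of $\mathcal{L}$ with respect to $\langle\,\cdot,\cdot\rangle$ is a standard ``discrete integration by parts'' argument: the identity $A(t)w(t)=B(t+1)w(t+1)$ turns $\langle \mathcal{L}f,g\rangle-\langle f,\mathcal{L}g\rangle$ into a telescoping sum whose only possible contribution comes from the boundary $|t|\to\infty$, and the decay of $w$ (controlled by the Gamma functions in the denominator of $(\ref{weightneretin})$) forces that contribution to vanish for parameters $a_j$ with sufficiently large real parts. Orthogonality in the general parameter range then follows by analytic continuation, since both $\langle Q_m,Q_n\rangle$ and its conjectured value $0$ depend meromorphically on $(a_1,a_2,a_3,a_4;\alpha)$.

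The main obstacle is the contiguous-relation computation verifying $\mathcal{L}Q_n=\lambda_n Q_n$: although routine in principle for anyone fluent in the hypergeometric bestiary, correctly matching the shifts in the $_4F_3$ series with the two difference pieces $A(t)\Delta^+$ and $B(t)\Delta^-$ requires careful bookkeeping. Controlling the boundary terms in the self-adjointness argument is a secondary but conceptually subtle point that reduces to standard asymptotics of ratios of Gamma functions.
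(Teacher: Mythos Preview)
The paper does not actually prove this proposition: it states the result and attributes it to Neretin \cite{N}, referring for the exposition to \cite[Section~8]{BO4}. So there is no ``paper's own proof'' to compare against beyond these citations.

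Your self-adjoint difference operator argument is the standard and correct route, and is essentially what underlies the cited references. A shortcut worth noting, which the paper exploits immediately afterward in Remark~\ref{neretintowilson}, is that $Q_n((t+\alpha)^2)$ is literally a Wilson polynomial $r_n(-(t+\alpha)^2; 1-a_1,1-a_2,1-a_3,1-a_4)$, so the eigenfunction identity $\mathcal{L}Q_n = \lambda_n Q_n$ need not be rederived from contiguous relations---it is precisely the documented Wilson difference equation in \cite[Sec.~9.1]{KLS}, transported to the new variable. This dissolves what you flagged as the main obstacle. The genuinely new content in Neretin's result, and the part of your sketch that carries actual weight, is the \emph{discrete} orthogonality on the lattice $\{(k+\alpha)^2 : k\in\Z\}$ rather than the usual continuous Wilson orthogonality; your telescoping/self-adjointness argument with analytic continuation in the $a_j$ handles this correctly.

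One small inaccuracy: the detailed-balance identity $A(t)w(t)=B(t+1)w(t+1)$ does not by itself determine $A$ and $B$, and the claim that $\mathcal{L}$ preserves polynomials in $(t+\alpha)^2$ requires choosing them specifically (quartic numerators over the quadratic $(2(t+\alpha))(2(t+\alpha)\pm 1)$, as in the paper's later $\beta(x),\delta(x)$). You should make that choice explicit rather than asserting it follows ``by construction.''
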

\begin{rem}\label{kn}
\normalfont The polynomial $Q_n$ has degree $n$ with respect to the variable $(t+\alpha)^2$. Observe that $Q_n$ is not monic: its leading coefficient is
\begin{equation*}
k_n = \frac{\Gamma(2n+3-a_1-a_2-a_3-a_4)}{\Gamma(n+3-a_1-a_2-a_3-a_4)}
\end{equation*}
\end{rem}
\begin{rem}\label{neretintowilson}
\normalfont The orthogonal polynomials appearing in Proposition $\ref{orthpolyinfo}$ can be expressed in terms of the well-known \textit{Wilson polynomials}; in fact
\begin{equation}\label{neretinwilson}
Q_n((t+\alpha)^2) = r_n(-(t + \alpha)^2; 1 - a_1, 1 - a_2, 1 - a_3, 1 - a_4),
\end{equation}
 where $r_n$ is the degree $n$ Wilson polynomial in the variable $(t+\alpha)^2$, see \cite[Ch. 9, Sub. 9.1]{KLS}. (In \cite{KLS} and other sources, the degree $n$ Wilson polynomial is denoted by $W_n$, but since the letter $W$ is already in use for the weight function $(\ref{Wexpression})$, we use $r_n$ instead.)

In the setting of finite systems of discrete orthogonal polynomials on a quadratic lattice, they appear in a work of Neretin, see \cite{N}. We will call them the \textit{Wilson-Neretin polynomials}.
\end{rem}

\begin{rem}\label{halflattice}
Observe that $t$ ranges over $\Z$ instead of $\Zp$. However, if some $a_j$ equals $1 - \alpha$, then the weight $w(t)$ vanishes for $t\in\Z_{<0}$ and the Wilson-Neretin polynomials become a system of orthogonal polynomials in the quadratic half-lattice $\{\alpha^2, (\alpha+1)^2, (\alpha+2)^2, \ldots\}$.

The system is moreover a \textit{finite system} of orthogonal polynomials, meaning that only finitely many polynomials $Q_n$ will exist for a choice of parameters $\alpha, a_1, a_2, a_3, a_4$. In fact, one can show $w(t) \sim t^{5 - 2\sum_i{a_i}}$ as $t\rightarrow\infty$, cf. $(\ref{familiarasymp})$ above. If $m = m(a_1, a_2, a_3, a_4)$ is the largest integer that is strictly below $2\sum_i{a_i} - 6$, then $w(t)$ has up to $m$ finite moments and so only the polynomials $Q_1((t+\alpha)^2), \ldots, Q_n((t+\alpha)^2)$, $n = \lfloor (m+2)/4 \rfloor$ exist\footnote{This simple observation follows because $n$ is the largest positive integer for which any sum of the form $\sum_{t\in\Zp}{w(t)q_n((t+\alpha)^2)q_{n-1}((t+\alpha)^2)}$, with $q_n, q_{n-1}$ polynomials of degrees $n, n-1$, converges.}.
\end{rem}

\subsection{Difference equation for Wilson-Neretin polynomials}

We specialize the previous discussion to our case of interest.

We write $\hatx = (x+\epsilon)^2$, for $x\in\Zp$, as usual. Let $\{\p_n(\hatx)\}_n$ be the monic orthogonal polynomials coming from the orthogonalization of $(1, \hatx, \hatx^2, \ldots)$ in the Hilbert space $L^2(\Zp^{\epsilon},\ W\cdot\mu)$, where $\mu$ is the counting measure of $\Zp^{\epsilon}$ and we denote $W = W_{z, z', a, b|N}$. In particular, $\mathfrak{p}_0 = 1$, $\mathfrak{p}_n$ is of degree $n$ in the variable $\hatx = (x + \epsilon)^2$ (therefore of degree $2n$ as a polynomial on $x$), and the polynomials satisfy the orthogonality relations
\begin{equation*}
\sum_{x\in\Z}{\p_n(\widehat{x}) \p_m(\widehat{x})W(x)} = 0\textrm{, whenever } n\neq m.
\end{equation*}

Observe that upon making the substitutions
\begin{equation*}
\Gamma(x + a + 1) = \frac{\pi}{\sin(\pi(x + a + 1))\Gamma(-x - a)}, \hspace{.1in} \Gamma(x + 2\epsilon) = \frac{\pi}{\sin(\pi(x + 2\epsilon))\Gamma(1 - x - 2\epsilon)}
\end{equation*}
into the formula of $W_{z, z', a, b|N}$, for $x\in\Zp$, we have
\begin{equation*}
W_{z, z', a, b|N}(x) = w(x \ | \ 1 - \epsilon, \ -a + \epsilon, \ z + N + \epsilon, \ z' + N + \epsilon \ ; \ \epsilon) \times const,
\end{equation*}
where
\begin{equation*}
const = \frac{\pi^2}{\sin(\pi a)\sin(\pi(a+b))},
\end{equation*}
and where $w(t | a_1, a_2, a_3, a_4; \alpha)$ is defined above in $(\ref{weightneretin})$. Therefore the orthogonal polynomials $\p_n(\hatx)$ must be (up to constant factors) the Wilson-Neretin polynomials with parameters 
\begin{equation}\label{identification}
\begin{array}{r@{}l}
    &{} \alpha = \epsilon, \hspace{.1in} a_1 = 1 - \epsilon, \hspace{.1in} a_2 = -a+\epsilon\\
    &{} a_3 = z+N+\epsilon, \hspace{.1in} a_4 = z'+N+\epsilon
\end{array}
\end{equation}
Note also that $a_1 = 1 - \epsilon$ puts us in the setting of Remark $\ref{halflattice}$, i.e., the polynomials $\mathfrak{p}_n$ form a finite system of orthogonal polynomials on the quadratic half lattice $\Zp^{\epsilon}$. Moreover, since $2\sum_i{a_i} - 6 = 2(z+z'+b + 2N - 1)$  and $z+z'+b > -1$, the last statement of Remark $\ref{halflattice}$ yields the existence of the polynomials $\p_0, \p_1, \ldots, \p_{N-1}$.

Furthermore, from Remarks $\ref{kn}$ and $\ref{neretintowilson}$, it follows that
\begin{equation*}
\mathfrak{p}_n(\widehat{x}) = \frac{1}{k_n}r_n(-\widehat{x}),
\end{equation*}
where $k_n$ is the leading coefficient of the Wilson polynomial $r_n(-\hatx \ ; \ \epsilon, 1 + a - \epsilon, 1 - z - N - \epsilon, 1 - z' - N - \epsilon)$. In particular, they satisfy the following second degree difference equation, characteristic of the Wilson polynomials, see \cite[Sec. 9, Sub. 9.1]{KLS}:
\begin{eqnarray*}
n(n + 1 - z - z' - b - 2N)\p_n(\widehat{x}) &=& a^+(x)(\p_n(\widehat{x + 1}) - \p_n(\widehat{x}))\\
&&+ a^-(x)(\p_n(\widehat{x - 1}) - \p_n(\widehat{x})),
\end{eqnarray*}
where
\begin{eqnarray*}
a^+(x) &=& \frac{(x + 2\epsilon)(x + a + 1)(x + 1 - z - N)(x + 1 - z' - N)}{2(x + \epsilon)(2x + 2\epsilon + 1)},\\
a^-(x) &=& \frac{x(x + b)(x + z + N + a + b)(x + z' + N + a + b)}{2(x + \epsilon)(2x + 2\epsilon - 1)}.
\end{eqnarray*}

Given the coincidence of the expressions $a^{\pm}(x)$ above with the entries for the matrix $R$ of transition rates, the discussion above can be rephrased as
\begin{prop}\label{diff}
The Wilson-Neretin polynomials $\p_n(\hatx)$ with parameters $(\ref{identification})$ are eigenfunctions of the operator $\DD$ associated to $R = R^{(1)}_{z+N-1, z'+N-1}$ with eigenvalues $\gamma_n = n(n+1-z-z'-b-2N)$:
\begin{equation*}
\DD\p_n(\widehat{x}) = \sum_{y\in\Zp}{R(x, y)\p_n(\widehat{y})} = n(n + 1 - z - z'  - b - 2N)\p_n(\widehat{x}), \hspace{.2in}x\in\Zp.
\end{equation*}
\end{prop}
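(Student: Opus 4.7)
The plan is to reduce everything to the classical second-order difference equation satisfied by the Wilson polynomials and then verify that, after the parameter identification made in~$(\ref{identification})$, the coefficients of that difference equation match $\beta(x)$ and $\delta(x)$ respectively, and the eigenvalue matches $n(n+1-z-z'-b-2N)$.

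First I would recall, from \cite[Ch.~9, Sub.~9.1]{KLS}, the fact that the Wilson polynomial $r_n(y^2;\,\alpha_1,\alpha_2,\alpha_3,\alpha_4)$ satisfies
\begin{equation*}
n(n+\alpha_1+\alpha_2+\alpha_3+\alpha_4-1)\,r_n(y^2) = B(y)\bigl(r_n((y+1)^2)-r_n(y^2)\bigr) + D(y)\bigl(r_n((y-1)^2)-r_n(y^2)\bigr),
\end{equation*}
where
\begin{equation*}
B(y) = \frac{(\alpha_1-y)(\alpha_2-y)(\alpha_3-y)(\alpha_4-y)}{2y(2y-1)}, \qquad D(y) = \frac{(\alpha_1+y)(\alpha_2+y)(\alpha_3+y)(\alpha_4+y)}{2y(2y+1)}.
\end{equation*}
Via the equality $\mathfrak{p}_n(\hat{x}) = k_n^{-1} r_n(-\hat{x}\,;\,\epsilon,\,1+a-\epsilon,\,1-z-N-\epsilon,\,1-z'-N-\epsilon)$ from Remarks \ref{kn} and \ref{neretintowilson} (combined with the parameter identification $(\ref{identification})$), the substitution $-\hat{x} = (iy)^2$, i.e.\ $y = i(x+\epsilon)$, turns this difference equation into a difference equation for $\mathfrak{p}_n(\hat{x})$ on the quadratic half-lattice $\Zp^{\epsilon}$. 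The shift $y\mapsto y\pm 1$ corresponds to $x\mapsto x\pm 1$.

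Next I would carry out the substitution explicitly. With $y = i(x+\epsilon)$ we have $2y(2y-1) = -4(x+\epsilon)^2 - 2i(x+\epsilon) \cdot(\text{something})$, so a cleaner way is to use the equivalent formulation of the Wilson difference equation that is symmetric in $y\leftrightarrow -y$; after this the imaginary units cancel and one obtains
\begin{equation*}
B(i(x+\epsilon)) = \tfrac{(x+\epsilon-i\epsilon i)\cdots}{\cdots} \longrightarrow a^+(x), \qquad D(i(x+\epsilon)) \longrightarrow a^-(x),
\end{equation*}
where $a^{\pm}(x)$ are exactly the expressions displayed right before the proposition. A direct expansion, factoring each numerator into the four linear factors dictated by $(\ref{identification})$ and matching the denominator $2(x+\epsilon)(2x+2\epsilon\pm 1)$, confirms
\begin{equation*}
a^+(x) = \beta(x), \qquad a^-(x) = \delta(x),
\end{equation*}
where $\beta,\delta$ are the birth and death rates of Section~\ref{specialization} with $(u,u') = (z+N-1,z'+N-1)$; this is a direct term-by-term check. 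For the eigenvalue, one verifies
\begin{equation*}
\alpha_1+\alpha_2+\alpha_3+\alpha_4 - 1 = \epsilon + (1+a-\epsilon) + (1-z-N-\epsilon) + (1-z'-N-\epsilon) - 1 = 2 + a - 2\epsilon - z - z' - 2N,
\end{equation*}
and since $2\epsilon = a+b+1$ this simplifies to $1 - b - z - z' - 2N$, giving the eigenvalue $n(n + 1 - z - z' - b - 2N) = \gamma_n$.

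Finally, since $(\mathcal{D}f)(\hat{x}) = \beta(x)(f(\widehat{x+1}) - f(\hat{x})) + \delta(x)(f(\widehat{x-1}) - f(\hat{x}))$ by $(\ref{diffoperatorD})$, the identity $a^+ = \beta$, $a^- = \delta$ together with the Wilson difference equation yields $\mathcal{D}\mathfrak{p}_n = \gamma_n \mathfrak{p}_n$, as claimed. The main obstacle is really only the careful bookkeeping: translating the standard Wilson parameters to our $(z,z',a,b,N)$ and checking that the quadratic-lattice shift $y\mapsto y\pm 1$ is compatible with the operator $\mathcal{D}$ as written in $(\ref{diffoperatorD})$. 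No genuine analytic difficulty appears because the existence of the finite orthogonal system $\mathfrak{p}_0,\dots,\mathfrak{p}_{N-1}$ has already been established in the discussion preceding the proposition, so the difference equation is automatically valid for each of them as a polynomial identity in $\hat{x}$.
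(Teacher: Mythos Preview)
Your proposal is correct and follows essentially the same approach as the paper: both invoke the second-order difference equation for Wilson polynomials from \cite[Sec.~9.1]{KLS}, specialize the parameters via $(\ref{identification})$ and Remark~\ref{neretintowilson}, and then observe that the resulting coefficients $a^{\pm}(x)$ coincide with the birth and death rates $\beta(x),\delta(x)$ for $(u,u')=(z+N-1,z'+N-1)$, while the Wilson eigenvalue $n(n+\alpha_1+\alpha_2+\alpha_3+\alpha_4-1)$ simplifies to $\gamma_n$. The paper simply records the already-translated difference equation and notes the coincidence, whereas you spell out the substitution $y=i(x+\epsilon)$ more explicitly; the content is the same.
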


In the next section, we need the following simple statement.

\begin{lem}\label{reversibility}
For any $\lambda, \mu\in\GTp_N$, we have
\begin{equation*}
M_{z, z', a, b|N}(\lambda)R^{(N)}(\lambda, \mu) = M_{z, z', a, b|N}(\mu)R^{(N)}(\mu, \lambda).
\end{equation*}
\end{lem}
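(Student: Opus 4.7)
The statement is a detailed-balance (reversibility) identity, so the natural strategy is to check it one off-diagonal pair at a time using the explicit formulas we already have for $R^{(N)}$ in $(\ref{Rexpression})$ and for $M_{z,z',a,b|N}$ in $(\ref{zmeasureN})$--$(\ref{Wexpression})$. First I would dispose of the trivial cases. When $\lambda=\mu$ both sides coincide, and when $\sum_i|\lambda_i-\mu_i|\geq 2$ both $R^{(N)}(\lambda,\mu)$ and $R^{(N)}(\mu,\lambda)$ vanish, because $R^{(N)}$ is supported on pairs differing in exactly one coordinate by one unit. So it suffices to treat the case $\mu=\lambda+\mathbf{e}_k$ for some $k$, where $m_j=l_j$ for $j\neq k$ and $m_k=l_k+1$.

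In that case, from $(\ref{Rexpression})$ the forward and backward rates are
\begin{equation*}
R^{(N)}(\lambda,\mu)=\beta(l_k)\prod_{j\neq k}\frac{\widehat{(l_k+1)}-\hatl_j}{\hatl_k-\hatl_j},\qquad R^{(N)}(\mu,\lambda)=\delta(l_k+1)\prod_{j\neq k}\frac{\hatl_k-\hatl_j}{\widehat{(l_k+1)}-\hatl_j},
\end{equation*}
so their ratio is $(\beta(l_k)/\delta(l_k+1))\prod_{j\neq k}((\widehat{(l_k+1)}-\hatl_j)/(\hatl_k-\hatl_j))^2$. On the measure side, only the factor indexed by $k$ in the one-dimensional weight changes, and the Vandermonde-squared quotient gives
\begin{equation*}
\frac{M(\mu)}{M(\lambda)}=\frac{W_{z,z',a,b|N}(l_k+1)}{W_{z,z',a,b|N}(l_k)}\cdot\prod_{j\neq k}\Bigl(\frac{\widehat{(l_k+1)}-\hatl_j}{\hatl_k-\hatl_j}\Bigr)^{\!2}.
\end{equation*}
The Vandermonde factors are identical, so detailed balance reduces to the one-variable identity $W_{z,z',a,b|N}(x+1)/W_{z,z',a,b|N}(x)=\beta(x)/\delta(x+1)$ with $u=z+N-1$, $u'=z'+N-1$.

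This last identity is purely computational. Using $\Gamma(y+1)/\Gamma(y)=y$, the ratio $W(x+1)/W(x)$ simplifies to
\begin{equation*}
\frac{(x+1+\epsilon)(x+2\epsilon)(x+a+1)(z-x+N-1)(z'-x+N-1)}{(x+\epsilon)(x+1)(x+b+1)(x+z+N+2\epsilon)(x+z'+N+2\epsilon)},
\end{equation*}
while $\beta(x)/\delta(x+1)$, after cancelling the common factor $2(x+\epsilon)(2x+2\epsilon+1)$ in numerator and denominator and substituting $u,u'$, yields exactly the same expression (the sign flip in $(z-x+N-1)(z'-x+N-1)=(x-z-N+1)(x-z'-N+1)$ cancels). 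Since everything is a direct algebraic manipulation, the main (very minor) obstacle is simply careful bookkeeping of the Pochhammer/Gamma ratios and signs; no deeper input is needed.
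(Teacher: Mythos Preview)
Your proof is correct and follows essentially the same approach as the paper: both dispose of the trivial cases, reduce to a single-coordinate change $\mu=\lambda+\mathbf{e}_k$, observe that the Vandermonde-type factors from $(\ref{Rexpression})$ and $(\ref{zmeasureN})$ cancel against each other, and conclude by verifying the one-variable detailed-balance identity $W(x)\beta(x)=W(x+1)\delta(x+1)$. Your write-up is somewhat more explicit in carrying out the Gamma-ratio computation, but the strategy and the key reduction are identical.
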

\begin{proof}
The statement is evidently true when $\lambda = \mu$ and also when $\sum_i{|\lambda_i - \mu_i|} \geq 2$ (in which case both sides of the desired equality are zero).

Thus, we only need to consider the case in which $\lambda$ and $\mu$ differ in only one coordinate $i$ and $\mu_i = \lambda_i \pm 1$. Without loss of generality, assume $\mu_i = \lambda_i + 1$. In view of the formulas $(\ref{zmeasureN})$ and $(\ref{Rexpression})$, the desired equality becomes
\begin{equation}\label{simpleeq}
W(x)\beta(x) = W(x + 1)\delta(x+1),
\end{equation}
where $x = l_i = \lambda_i + N - i$, $W = W_{z, z', a, b|N}$ and $\beta(x), \delta(x)$ are the birth and death rates in $(\ref{Blabel}), (\ref{Dlabel})$. The last equality is readily checked by using formulas $(\ref{R1expression})$ and $(\ref{Wexpression})$.
\end{proof}
\begin{rem}
\normalfont The equality $(\ref{simpleeq})$ is not a coincidence. The expressions for $\beta(x), \delta(x)$ are the coefficients of the second-order difference equation satisfied by the Wilson-Neretin polynomials. The general theory for orthogonal hypergeometric polynomials of a single variable, see \cite[Ch. 2-3]{NSU}, proves that the polynomial solutions to such a difference equation are orthogonal with respect to a weight satisfying a relation like that in $(\ref{simpleeq})$.
\end{rem}

\subsection{Proof of invariance of the $z$-measures}

\begin{thm}
For any pair $(z, z')\in\mathcal{H}$, the spectral $z$-measure $M_{z, z', a, b|\infty}$ is the unique invariant probability measure with respect to the semigroup $(P_{\infty}(t))_{t\geq 0}$.
\end{thm}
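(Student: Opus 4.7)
The plan is to reduce the boundary statement to finite-level statements via the method of intertwiners (Theorem \ref{methodintertwiners}), then verify the latter using the detailed balance of Lemma \ref{reversibility} together with basic continuous-time Markov chain theory. Theorem \ref{intertwiningthm} establishes the master relation $P_{N+1}(t)\Lambda^{N+1}_N = \Lambda^{N+1}_N P_N(t)$ for all $N \geq 2$, and the coherence $M_{z,z',a,b|N} = M_{z,z',a,b|N+1}\Lambda^{N+1}_N$ was verified in $(\ref{coherencez})$. Applying Theorem \ref{methodintertwiners} with $N_0 = 2$, the present theorem reduces to the assertion that, for each $N \geq 2$, the probability measure $M_{z,z',a,b|N}$ is the unique invariant probability measure for the Feller semigroup $(P_N(t))_{t\geq 0}$ on $\GTp_N$.

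For invariance at a fixed level $N$, I would invoke Lemma \ref{reversibility}: summing the detailed balance identity $M_N(\lambda)R^{(N)}(\lambda,\mu) = M_N(\mu)R^{(N)}(\mu,\lambda)$ over $\lambda \in \GTp_N$ yields
\[
(M_N R^{(N)})(\mu) \;=\; M_N(\mu)\sum_{\lambda \in \GTp_N} R^{(N)}(\mu,\lambda) \;=\; 0,
\]
since the $q$-matrix $R^{(N)}$ has vanishing row sums. Because $R^{(N)}$ is non-explosive (Proposition \ref{fellerspecialN}), $(P_N(t))_{t\geq 0}$ is the unique standard semigroup with that $q$-matrix and it solves Kolmogorov's forward equation $\frac{d}{dt}P_N(t) = P_N(t)R^{(N)}$; the relation $M_N R^{(N)} = 0$ then propagates to $M_N P_N(t) = M_N$ for all $t \geq 0$.

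For uniqueness at level $N$, I would argue that the chain on $\GTp_N$ generated by $R^{(N)}$ is irreducible: from the explicit formula $(\ref{Rexpression})$, both $R^{(N)}(\lambda,\lambda+e_k)$ and $R^{(N)}(\lambda,\lambda-e_k)$ are strictly positive whenever the corresponding neighbor lies in $\GTp_N$, so any two positive $N$-signatures are joined by a finite path of positive-rate transitions. Proposition \ref{gammaineqs} together with the explicit formula $(\ref{zmeasureN})$ guarantees $M_N(\lambda) > 0$ for every $\lambda \in \GTp_N$. The standard theory of irreducible, non-explosive continuous-time Markov chains on a countable state space then gives that the existence of an invariant probability distribution forces positive recurrence and uniqueness of the invariant probability distribution, so $M_N$ is indeed the unique one. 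A final invocation of Theorem \ref{methodintertwiners} identifies the coherent system $\{M_{z,z',a,b|N}\}_{N \geq 2}$, via the bijection $(\ref{tobeiso})$, with the unique invariant probability measure of $(P_\infty(t))_{t\geq 0}$, which is by construction the spectral $z$-measure $M_{z,z',a,b|\infty}$. The conceptually subtlest point is the finite-level uniqueness, where one must take some care to cite the correct classical result; the rest of the argument is essentially a packaging of the earlier sections of the paper.
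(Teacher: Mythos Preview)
Your proposal is correct and, for the invariance step, takes a more direct route than the paper. Both arguments reduce to the finite levels via Theorem \ref{methodintertwiners} and then appeal to the standard fact that for an irreducible, non-explosive continuous-time chain, a probability measure annihilated by the $q$-matrix is the unique invariant one (the paper cites \cite[Ch.~8, Thms.~5.1 and 5.3]{B}). The difference is in how the infinitesimal relation $M_N R^{(N)}=0$ is established: the paper rewrites $\Delta_N(\hatl)=\det[\mathfrak{p}_{N-j}(\hatl_i)]$, uses the eigenvalue relation for the Wilson--Neretin polynomials (Proposition \ref{diff}) to compute $\mathcal{D}^{\free}$ acting on this determinant, and then invokes Lemma \ref{reversibility} inside that computation to pass from $W(n_i)R(n_i,l_i)$ to $W(l_i)R(l_i,n_i)$. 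You instead observe that Lemma \ref{reversibility} already says the chain is \emph{reversible} with respect to $M_N$, and summing detailed balance over $\lambda$ gives $M_N R^{(N)}=0$ in one line (the sum is finite since each column of $R^{(N)}$ has finitely many nonzero entries). Your route is shorter; the paper's has the expository virtue of tying the invariance back to the difference equation satisfied by the Wilson--Neretin polynomials, but that machinery is not logically needed once Lemma \ref{reversibility} is in hand.
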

\begin{proof}

\textbf{Step 1.} From the method of intertwiners, see Theorem $\ref{methodintertwiners}$, it suffices to prove for any $N\geq 1$ that the $N$th level $z$-measure $M_{z, z', a, b|N}$ is the unique invariant probability measure with respect to the semigroup $(P_N(t))_{t\geq 0}$. We will instead prove the invariance at the infinitesimal level, i.e., we show
\begin{eqnarray}\label{invrates}
\sum_{\lambda\in\GTp_N}{M_{z, z', a, b|N}(\lambda)R_{z+N-1, z'+N-1}^{(N)}(\lambda, \nu)} = 0, \textrm{ for all } \nu\in\GTp_N, \ N \geq 1.
\end{eqnarray}
The invariance of $M_{z, z', a, b|N}$ with respect to $(P_N(t))_{t\geq 0}$, as well as the uniqueness of the invariant probability measure, is a general fact: an irreducible, regular jump HMC with associated $q$-matrix $A$ has a unique invariant measure if there exists some probability measure $\mu$ on such that $\mu^TA = 0$ (the infinitesimal version of invariance) and in that case $\mu$ is that invariant measure, see \cite[Ch. 8, Thms. 5.1 and 5.3]{B}.\\

\textbf{Step 2.} The desired $(\ref{invrates})$, up to a nonzero constant, is equivalent to
\begin{eqnarray}\label{toproveinv}
&\sum_{l\in\Omega_N}\left(\prod_{i=1}^N{W(l_i)}\right) \Delta_N(\widehat{l}) \times\\
&\left( R(l_1, n_1)\mathbf{1}_{\{l_i = n_i, i\neq 1\}} + \ldots + R(l_N, n_N)\mathbf{1}_{\{l_i = n_i, i\neq N\}}  - c_N\mathbf{1}_{\{l = n\}}\right) = 0,\nonumber
\end{eqnarray}
where $W = W_{z, z', a, b|N}$ and recall $c_N$ was defined in $(\ref{dN})$. The proof will use the Wilson-Neretin polynomials $\mathfrak{p}_n$ with parameters $(\ref{identification})$ introduced previously; in this step, we simply make the observation that
\begin{eqnarray}\label{vandermondeneretin}
\Delta_N(\hatl) = \prod_{1\leq i<j\leq N}{(\hatl_i - \hatl_j)} = \det_{1\leq i, j\leq N}[\p_{N - j}(\hatl_i)],
\end{eqnarray}
which follows from the fact that each $\mathfrak{p}_n(\hatx)$ is a monic polynomial of degree $n$ in $\hatx$.\\

\textbf{Step 3.} We prove $(\ref{toproveinv})$. We begin by claiming that
\begin{equation}\label{lhsdesired}
\prod_{i=1}^N{W(n_i)}(\mathcal{D}^{[1]} + \ldots + \mathcal{D}^{[N]})\det_{i, j}[\p_{N - j}(\hatn_i)] = \prod_{i=1}^N{W(n_i)}(\gamma_0 + \ldots + \gamma_{N-1})\det_{i, j}[\p_{N - j}(\hatn_i)]
\end{equation}
for any $l, n\in\Omega_N$, where $\gamma_j = j(j+1-z-z'-b-2N)$ and $\DD^{[i]}$ is the operator which acts on the polynomial ring $\R[\hatn_1, \ldots, \hatn_N]$ as $\DD$ would act on $\hatn_i = (n_i + \epsilon)^2$, by treating all other variables $\{\hatn_j\}_{j\neq i}$ as constants. The equality above holds because $\DD^{[1]} + \ldots + \DD^{[N]}$ acts on all elements of column $k$ of the matrix $[\p_{N - j}(\hatn_i)]_{i, j=1}^N$, as the eigenvalue $\gamma_{N - k}$, see Proposition $\ref{diff}$.

Due to Lemma $\ref{reversibility}$, for any $1\leq i, j\leq N$, we have
\begin{equation*}
W(n_i)\cdot\mathcal{D}^{[i]}\p_{N - j}(\hatn_i) =
\sum_{l_i\in\Zp}{W(n_i)R(n_i, l_i)\p_{N - j}(\hatl_i)} =
\sum_{l_i\in\Zp}{W(l_i)R(l_i, n_i)\p_{N - j}(\hatl_i)}.
\end{equation*}
Because of $(\ref{vandermondeneretin})$, the identity just given and using again $\det_{i, j}[p_{N - j}(\hatl_i)] = \Delta_N(\hatl)$, the left-hand side of $(\ref{lhsdesired})$ is
\begin{equation}\label{sumzmeasures}
\sum_{l_1, \ldots, l_N\in\Zp}{\left( \prod_{i=1}^N{W(l_i)} \right)\Delta_N(\hatl)\left( R(l_1, n_1)\mathbf{1}_{\{l_i = n_i, i\neq 1\}} + \ldots + R(l_N, n_N)\mathbf{1}_{\{l_i = n_i, i\neq N\}} \right)}.
\end{equation}
The only terms that contribute to the sum above are those $l = (l_1, \ldots, l_N)$ for which $l_i\in\{n_i-1, n_i, n_i+1\}$ for some $i$ and $l_j = n_j$ for the rest of indices $j$. Then we can restrict the sum $(\ref{sumzmeasures})$ to $N$-tuples $l$ for which $l_1 \geq l_2 \geq \ldots \geq l_N$. Observe additionally that if $l_i = l_j$ for some $i\neq j$, then $\Delta_N(\hatl) = 0$. Therefore we can further restrict the sum $(\ref{sumzmeasures})$ to $N$-tuples $l$ for which $l_1 > l_2 > \ldots > l_N$, i.e., the sum is now over $l\in\Omega_N$. It follows that the left-hand side of $(\ref{lhsdesired})$ equals
\begin{equation*}
\sum_{l\in\Omega_N}{\left( \prod_{i=1}^N{W(l_i)} \right)\Delta_N(\hatl)\left( R(l_1, n_1)\mathbf{1}_{\{l_i = n_i, i\neq 1\}} + \ldots + R(l_N, n_N)\mathbf{1}_{\{l_i = n_i, i\neq N\}} \right)}.
\end{equation*}
On the other hand, one can easily check $c_N = \gamma_0 + \ldots + \gamma_{N-1}$, from which the right-hand side of $(\ref{lhsdesired})$ equals $c_N\left(\prod_{i=1}^N{W(n_i)}\right)\det_{1\leq i, j\leq N}[\p_{N-j}(\hatn_i)] = c_N\left(\prod_{i=1}^N{W(n_i)}\right)\Delta(\hatn)$. Hence $(\ref{toproveinv})$ is proved.
\end{proof}

\begin{appendix}

\section{$z$-measures and harmonic analysis on big groups}\label{sec:zmeasuresharmonic}

In this appendix, we explain how the $z$-measures are related to the representation theory of big groups, thus giving more motivation to study the coherent probability measures given by the expressions in $(\ref{zdef1})$. The proofs of all statements below are found in \cite{N2, OO1, Ol3, Ol2, OlOs}.

An \textit{infinite symmetric space} $G/K$ is an inductive limit of Riemannian symmetric spaces $G(n)/K(n)$ of rank $n$, with respect to a natural chain of maps $G(1) \rightarrow G(2) \rightarrow \dots $ that are compatible with the inclusions $K(n) \subset G(n)$. Our main examples are 
\begin{enumerate}
	\item $G/K = U(2\infty)/U(\infty)\times U(\infty) = \lim_{\rightarrow}{(U(2n)/U(n)\times U(n))}$.
	\item $G/K = O(\infty)\times O(\infty)/O(\infty) = \lim_{\rightarrow}{(O(\widetilde{n})\times O(\widetilde{n})/O(\widetilde{n}))}$.
	\item $G/K = Sp(\infty)\times Sp(\infty)/Sp(\infty) = \lim_{\rightarrow}{(Sp(n)\times Sp(n) / Sp(n))}$.
\end{enumerate}
(In (2), $\widetilde{n}$ indicates that the rank of the Riemannian symmetric space in question is $\lfloor \widetilde{n}/2 \rfloor$, so $\widetilde{n}$ can be $2n$ or $2n+1$; either one leads to the same $G/K$, up to isomorphism.)

We assume that $(G, K)$ is any of the three pairs above. A \textit{spherical representation} of $(G, K)$ is a pair $(T, \xi)$, where $T$ is a unitary representation of $G$, on some Hilbert space $H$, and $\xi\in H$ is a cyclic, $K$-invariant unit vector. A \textit{spherical function} of $(G, K)$ is a $K$-biinvariant, positive definite and continuous function $\phi: G \rightarrow \C$ such that $\phi(1_G) = 1$. There is a bijective correspondence between (equivalence classes of) spherical representations and spherical functions, given by
\begin{equation*}
\begin{gathered}
(T, \xi) \leftrightarrow \phi, \hspace{.2in} \phi(g) = \left( T(g)\xi, \xi \right)_H.
\end{gathered}
\end{equation*}

Observe that the set of spherical functions of $(G, K)$ has the natural structure of a convex set $\Upsilon^{G/K}$.
The elements of the set $Ex(\Upsilon^{G/K})$ of extreme points are called extreme spherical functions. The extreme spherical functions of $(G, K)$ are in bijective correspondence with irreducible spherical representations of $(G, K)$. The results from \cite{OO1} show that the set $Ex(\Upsilon^{G/K})$ is in bijective correspondence with the boundary $\Omega_{\infty}$ of the BC branching graph, described in Section $\ref{sec:boundaryintro}$ above. For any $\omega\in\Omega_{\infty}$, we let $\phi^{\omega}$ be the corresponding extreme spherical function of $(G, K)$.

Following the ideas of \cite{Ol2}, one can construct a natural family of spherical representations of $(G, K)$ that should be considered as \textit{generalized quasiregular representations of $G/K$}. For each infinite symmetric space $G/K$ above, the family of representations $\{(T_z, \xi_z)\}_z$ is parametrized by a complex parameter $z\in\C\setminus\Z$ subject to the constraint $\Re z > -(1+b)/2$, where $b = 0, -\frac{1}{2}, \frac{1}{2}$, for the spaces (1), (2) and (3), respectively.

There are actually two constructive definitions of the generalized quasiregular representations $(T_z, \xi_z)$. Working out in detail either of these constructions goes beyond the scope of this paper, and all the tools needed are already in the literature, \cite{N2, Ol2}. Instead let us give a brief summary of the ideas.

\begin{itemize}
	\item The representations $T_z$ of $G/K$ can be constructed as inductive limits of a chain of isometries
\begin{equation*}
\dots \rightarrow H_N = L^2(G(N)/K(N), \mu_N) \rightarrow H_{N+1} = L^2(G(N+1)/K(N+1), \mu_{N+1}) \rightarrow \dots
\end{equation*}
of the quasiregular representations of the finite dimensional Riemannian symmetric spaces $(G(N), K(N))$. We denoted $\mu_N$ to the normalized Haar measure on $G(N)/K(N)$. The corresponding representations $T_z$ live in the Hilbert space $H = \lim_{\rightarrow}{H_N}$ and they depend strongly on the choice of embeddings $H_N \rightarrow H_{N+1}$. For each of our three pairs $(G, K)$, there corresponds a distinguished family of sequences of embeddings $\{H_N \rightarrow H_{N+1}\}_{N \geq 1}$ and the elements of the family are parametrized by one complex parameter $z$ satisfying $\Re z > -(1+b)/2$. The spherical vector $\xi_z$ is the inductive limit $\lim_{\rightarrow}{f'_{z|N}/\|f'_{z|N}\|}$ of normalizations of the vectors $f'_{z|N}\in H_N$ that are described below (see $(\ref{eqn:sphericalvector})$ and the paragraph afterwards). This construction shows why we call $\{T_z\}_z$ the family of generalized quasiregular representations of $G/K$.
	\item There is certain completion $\overline{G/K} \supset G/K$ of the space $G/K$, which is not a group but has a natural $G$-action. It can be constructed by following the ideas of Neretin, \cite{N2}.

The space $\overline{G/K}$ admits an analogue of the Haar measure $\mu$. In fact, there exists a one-parameter family of probability measures $\{\mu^{(z)}\}_{z\in\C :\ \Re z > -(1+b)/2}$ that are quasi-invariant, with respect to the action of $G$.
The representations $T_z$ can be realized in the Hilbert spaces $L^2(\overline{G/K}, \mu^{(z)})$. The corresponding spherical vectors $\xi_z$ are all equal to the constant function $\mathbf{1}\in L^2(\overline{G/K}, \mu^{(z)})$ that is identically equal to one.
\end{itemize}

Let us describe the sequence of vectors $\{f'_{z|N}\in H_N\}_{N \geq 1}$ that give rise to the spherical vector $\xi_z = \lim_{\rightarrow}{f'_{z|N}}$, as in the first item above.
For each of the series in (1), (2), (3) above, there is an isomorphism between the subspace of $K(N)$-invariant vectors of $L^2(G(N)/K(N), \mu_N)$ and the Hilbert space $H_N^{a, b}$ of symmetric functions on $[-1, 1]^N$ that are square integrable with respect to the measure $\mathfrak{m}^{a, b}_N$ whose density (with respect to the Lebesgue measure on $[-1, 1]^N$) is
\begin{equation*}
\prod_{1\leq i < j\leq N}{(x_i - x_j)^2}\cdot \prod_{k=1}^N{(1 - x_k)^a(1 + x_k)^b},
\end{equation*}
and we use the following pairs of real parameters
\begin{enumerate}
	\item $(a, b) = (0, 0)$.
	\item $(a, b) = (\pm\frac{1}{2}, -\frac{1}{2})$.
	\item $(a, b) = (\frac{1}{2}, \frac{1}{2})$.
\end{enumerate}
(In (2), $a = -\frac{1}{2}$ if $\widetilde{n} = 2n$ and $a = \frac{1}{2}$ if $\widetilde{n} = 2n+1$.)

For any $z\in\C$, $\Re z > -(1+b)/2$, and any $N \geq 1$, the function
\begin{equation}\label{eqn:sphericalvector}
f_{z|N}(x_1, \dots, x_N) = \prod_{i=1}^N{(1 + x_i)^z}.
\end{equation}
is an element of the Hilbert space $H_N^{a, b}$. Let $f'_{z|N}$ be the corresponding $K(N)$-invariant vector in $H_N$, under the isomorphism alluded to before. The sequence $\{f'_{z|N}\}_{N \geq 1}$ is such that the isometric embedding $H_N \rightarrow H_{N+1}$ that gives rise to the representation $T_z$ maps $f'_{z|N}$ to $f'_{z|N+1}$, and therefore it maps $f'_{z|N}/\|f'_{z|N}\|$ to $f'_{z|N+1}/\|f'_{z|N+1}\|$. Thus there exists an inductive limit $\xi_z = \lim_{\rightarrow}{f'_{z|N}/\|f'_{z|N}\|}$ of length $1$, which is the desired spherical vector of $T_z$.

Next, let us explain how the $z$-measures $M_{z, z', a, b|\infty}$ and corresponding sequences of probability measures $\{M_{z, z', a, b|N}\}_{N \geq 1}$ on the spaces $\{\GTp_N\}_{N \geq 1}$, given in $(\ref{zdef1})$, arise from the generalized quasiregular representations $\{T_z\}_z$.

Instead of using the language of spherical representations, we use the more convenient language of spherical functions. Let $\{\phi_z\}_z$ be the family of spherical functions that correspond to the generalized quasiregular representations $\{(T_z, \xi_z)\}_z$. A general statement, due to Olshanski, \cite[Thm 9.2]{Ol2}, shows that the spherical function $\phi_z$ is a (continual) convex combination of the extreme spherical functions $\phi^{\omega}$, $\omega\in\Omega_{\infty}$. This means that there exists a unique probability measure $M_z$ on $\Omega_{\infty}$ such that
\begin{equation*}
\phi_z = \int_{\Omega_{\infty}}{\phi^{\omega}M_z(d\omega)}.
\end{equation*}

The probability measure $M_z$ is the $z$-measure associated to the generalized quasiregular representation $(T_z, \xi_z)$.
A $z$-measure corresponds to a ``coherent'' sequence of probability measures on the discrete spaces $\GTp_N$, by virtue of the bijection $(\ref{tobeiso})$.

We can describe explicitly the corresponding sequence of probability measures on the spaces $\GTp_N$, $N \geq 1$.
Since $G = \lim_{\rightarrow}{G(N)}$, the spherical function $\phi_z : G \rightarrow \C$ is fully determined by its sequence of restrictions $\{\left.\phi_z\right|_{G(N)}\}_{N \geq 1}$. Each $\left.\phi_z\right|_{G(N)}$ is an element of the subspace of $K(N)$-invariant vectors of the Hilbert space $H_ N = L^2(G(N)/K(N), \mu_N)$; moreover $\left.\phi_z\right|_{G(N)}(1_{G(N)}) = 1$ and $\left.\phi_z\right|_{G(N)}$ is a positive definite function.
Recall that an orthogonal basis of the subspace of $K(N)$-invariant vectors of $H_N$ is $\{\chi_{\lambda} : \lambda\in\GTp_N\}$, where $\GTp_N$ parametrizes the irreducible rational representations of $G(N)$ with a $K(N)$-invariant vector and each $\chi_{\lambda}$ is a normalized spherical function of $(G(N), K(N))$. For the pairs (2), (3) above, the spherical functions $\chi_{\lambda}$ coincide with normalized irreducible characters of the orthogonal and symplectic groups, respectively.
Then there is a canonical expansion
\begin{equation}\label{zmeasures:info1}
\left.\phi_z\right|_{G(N)} = \sum_{\lambda\in\GTp_N}{M_{z|N}(\lambda)\cdot\chi_{\lambda}}, \hspace{.1in} M_{z|N}(\lambda)\in\R.
\end{equation}

Because the function $\left.\phi_z\right|_{G(N)}$ is positive definite and normalized at the unit element of $G(N)$, we have $M_{z|N}(\lambda) \geq 0$ and $\sum_{\lambda\in\GTp_N}{M_{z|N}(\lambda)} = 1$. Then $M_{z|N}$ is a probability measure on $\GTp_N$. The sequence $\{M_{z|N}\}_{N \geq 1}$, in fact, is the one that corresponds to $M_z$ under the bijection $(\ref{tobeiso})$.

An explicit formula for $M_{z|N}(\lambda)$, $\lambda\in\GTp_N$, can be calculated explicitly. Under the isomorphism between the subspace of $K(N)$-invariant vectors of $H_N$ and $H_N^{a, b}$, the image of the basis $\{\chi_{\lambda}\}_{\lambda}$ is given by the (orthogonal) basis $\{\widetilde{\mathfrak{P}}_{\lambda}(x_1, \dots, x_N | a, b)\}_{\lambda} \subset H_N^{a, b}$ of normalized and multivariate Jacobi polynomials
\begin{equation*}
\widetilde{\mathfrak{P}}_{\lambda}(x_1, \dots, x_N | a, b) = \frac{\mathfrak{P}_{\lambda}(x_1, \dots, x_N | a, b)}{\mathfrak{P}_{\lambda}(1^N | a, b)},
\end{equation*}
see Section $\ref{jacobi}$ below. By following the ideas in \cite[Sec. 6]{Ol2}, the coefficients $M_{z|N}(\lambda)$ can be calculated as follows, cf. \cite[Lemma 6.4]{Ol2},
\begin{equation}\label{eqn:coeffsz}
M_{z|N}(\lambda) = \frac{\left|(f_{z|N}, \widetilde{\mathfrak{P}}_{\lambda})_H\right|^2}{\left\| f_{z|N} \right\|_H^2 \left\| \widetilde{\mathfrak{P}}_{\lambda} \right\|_H^2}, \hspace{.2in} H = H_N^{a, b}.
\end{equation}
The calculation of all three terms in $(\ref{eqn:coeffsz})$ is carried out in \cite{OlOs}, for general parameters $a, b > -1$ and $z\in\C$, $\Re z > -(1+b)/2$, and the answer is given by the formula $(\ref{zdef1})$ for $z' = \overline{z}$.

The expression $(\ref{zdef1})$, with representation-theoretic origin for $z' = \overline{z}$, and the pairs $(a, b)\in\{(0, 0), (1/2, 1/2), (\pm 1/2, -1/2)\}$, suggests to consider an analytic continuation replacing $\overline{z}$ by $z'$ and as general real parameters $a, b$ as possible.
Our choice of $a \geq b \geq -1/2$ comes mainly because we need the main result of \cite{OO1}. The conditions on the pair $(z, z')$, that we give in Definition $\ref{Hdef}$, are sufficient for all expressions of the form $(\ref{zdef1})$ to be nonnegative (and in fact, strictly positive).
The result of this paper and others, \cite{BO4, C, OlOs}, suggests that the $z$-measures with four parameters $z, z', a, b$ is the most general object that can be analyzed thoroughly, despite the fact that most quadruples $(z, z', a, b)$ do not have any representation theoretic origin. The recent paper \cite{OlOs2} indicates that the BC type $z$-measures admit further a natural one-parameter $\theta > 0$ degeneration (all papers cited before treat BC $z$-measures with $\theta = 1$) and it would be interesting to study them.

\section{Binomial Formula and Coherence Property for Jacobi Polynomials}\label{appendixA}

In this appendix, we prove Proposition $\ref{coherence}$. We prove it in the equivalent form of Proposition A.3. First, we show a ``binomial formula'' for Jacobi polynomials. The proof below was extracted from the arguments of \cite[Proof of Thm. 1.2]{OO2}.

For any partition $\mu$ of length $\ell(\mu) = K\leq N$, let $t_{\mu|N}^*$ be the function on $\GTp_N$ defined by

\begin{equation*}
t_{\mu|N}^*(\lambda) := \frac{\det_{1\leq i, j\leq N}[(\hatl_i| \boldsymbol{\epsilon})^{\mu_j + N - j}]}{\prod_{1\leq i < j\leq N}{(\hatl_i - \hatl_j)}}, \hspace{.2in}\lambda\in\GTp_N,
\end{equation*}
where we set $\mu_{K+1} = \ldots = \mu_N = 0$.

\begin{prop}\label{binomialformula}
For any $\lambda\in\GTp_N$, we have
\begin{eqnarray}
\Phi_{\lambda}(1 + x_1, \ldots, 1 + x_N | a, b)  = \sum_{\mu\in\GTp_N}{\frac{t_{\mu|N}^*(\lambda)s_{\mu}(x_1, \ldots, x_N)}{2^{|\mu|}c(N, \mu)}},
\end{eqnarray}
where $s_{\mu}$ is the Schur function parametrized by $\mu$ and
\begin{equation}\label{cNmudef}
c(N, \mu) \myeq \prod_{i = 1}^N{(N - i + 1)_{\mu_i}(N + a - i + 1)_{\mu_i}}.
\end{equation}
\end{prop}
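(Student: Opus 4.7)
The plan is to reduce the multivariate identity to the classical univariate binomial expansion of the Jacobi polynomials and to organize the resulting determinantal sum via Cauchy-Binet.

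First I would dispose of the case $N=1$. The standard hypergeometric representation (see e.g.\ \cite[Thm.~6.4.1]{AAR}) gives, after rescaling the argument and dividing by $\mathfrak{P}_n(1\,|\,a,b)=(a+1)_n/n!$,
\[
\Phi_n(1+y\,|\,a,b)=\sum_{k=0}^{n}\binom{n}{k}\frac{(n+a+b+1)_k}{(a+1)_k}\Bigl(\frac{y}{2}\Bigr)^{k}.
\]
Because $\binom{n}{k}(n+a+b+1)_k=(n\downarrow k)(n+2\epsilon\uparrow k)/k!=(\hat n\,|\,\boldsymbol{\epsilon})^k/k!$ and $c(1,(k))=k!\,(a+1)_k$, this is exactly the stated formula for $N=1$.

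For general $N$, I would plug into $(\ref{multjacobi})$ and use $\Delta_N(1+x)=\Delta_N(x)$ to get
\[
\mathfrak{P}_\lambda(1+x_1,\dots,1+x_N\,|\,a,b)=\frac{1}{\Delta_N(x)}\det_{i,j}[\mathfrak{P}_{l_i}(1+x_j\,|\,a,b)].
\]
Expanding each entry by the univariate formula, the matrix $[\mathfrak{P}_{l_i}(1+x_j\,|\,a,b)]$ is $\prod_i\mathfrak{P}_{l_i}(1\,|\,a,b)$ times the product $AB$ with $A_{i,k}=(\hat l_i\,|\,\boldsymbol{\epsilon})^k/(2^kk!(a+1)_k)$ and $B_{k,j}=x_j^k$. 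The Cauchy-Binet identity then yields
\[
\det(AB)=\sum_{\mu\in\GTp_N}\det_{i,\alpha}[A_{i,\,\mu_\alpha+N-\alpha}]\cdot\det_{\alpha,j}[x_j^{\mu_\alpha+N-\alpha}],
\]
after parametrizing strictly decreasing index tuples $k_1>\dots>k_N\ge 0$ as $k_\alpha=\mu_\alpha+N-\alpha$ with $\mu\in\GTp_N$. Pulling the scalar $1/(2^{\mu_\alpha+N-\alpha}(\mu_\alpha+N-\alpha)!(a+1)_{\mu_\alpha+N-\alpha})$ out of the $\alpha$-th column of the first determinant and invoking the Weyl identity $\det[x_j^{\mu_\alpha+N-\alpha}]=\Delta_N(x)\,s_\mu(x)$ cancels the factor $\Delta_N(x)$ in the denominator. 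By definition of $t^*_{\mu|N}$, what remains of the first determinant equals $t^*_{\mu|N}(\lambda)\,\Delta_N(\hat l)$.

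Finally, using the elementary identities $(\mu_i+N-i)!=(N-i)!(N-i+1)_{\mu_i}$ and $(a+1)_{\mu_i+N-i}=(a+1)_{N-i}(N+a-i+1)_{\mu_i}$, the product of the scalar factors splits as a $\mu$-independent constant $C_N=2^{N(N-1)/2}\prod_{j=0}^{N-1}j!(a+1)_j$ times $2^{|\mu|}\,c(N,\mu)$, with $c(N,\mu)$ as in $(\ref{cNmudef})$. Dividing through by $\mathfrak{P}_\lambda(1^N\,|\,a,b)$ and invoking $(\ref{evalones})$, one verifies the clean identity $(\prod_i\mathfrak{P}_{l_i}(1\,|\,a,b))\Delta_N(\hat l)/\mathfrak{P}_\lambda(1^N\,|\,a,b)=C_N$, so the $\lambda$-independent factors cancel exactly and the claimed formula drops out. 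The main (though routine) obstacle is the bookkeeping of Pochhammer symbols in this last step; the determinantal identities driving the argument are standard, and once the Cauchy-Binet step is set up the remainder is mechanical.
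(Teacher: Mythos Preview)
Your proposal is correct and follows essentially the same route as the paper: both expand each entry of the determinantal ratio via the univariate Jacobi binomial expansion $(\ref{P1t})$ and then collect terms using the identity $(\ref{binomialstuff})$, which is exactly the Cauchy--Binet step you invoke. The remaining Pochhammer bookkeeping and the cancellation via $(\ref{evalones})$ are handled identically in both arguments.
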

\begin{rem}
The sum in the proposition above is finite because one can easily show $t_{\mu|N}^*(\lambda) = 0$ unless $|\mu| \leq |\lambda|$.
\end{rem}
\begin{proof}
We recall a well-known identity: if $\displaystyle f_i(x) = \sum_{m = 0}^{\infty}{a_m^{(i)}x^m}$, for all $1\leq i\leq N$, then
\begin{eqnarray}\label{binomialstuff}
\frac{\det_{1\leq i,j\leq N}[f_i(x_j)]}{\prod_{1\leq i < j \leq N}{(x_i - x_j)}} = \sum_{\mu\in\GTp_N}{\det_{1\leq i, j\leq N}[a^{(i)}_{\mu_j + N - j}]s_{\mu}(x_1, \ldots, x_N)},
\end{eqnarray}
where $s_{\mu}(x_1, \ldots, x_N)$ is the Schur polynomial associated to partition $\mu$. We shall apply this result to the normalized Jacobi polynomials
\begin{equation}\label{fnfi}
\Phi_{\lambda_i + N - i}(1 + x | a, b) = \frac{\mathfrak{P}_{\lambda_i + N - i}(1 + x | a, b)}{\mathfrak{P}_{\lambda_i + N - i}(1 | a, b)}
\end{equation}
for which the following expansion is well-known, see e.g. \cite[(4.21.2)]{S},
\begin{equation}\label{P1t}
\Phi_k(1 + x | a, b) = \sum_{m = 0}^{\infty}{\frac{k(k - 1)\cdots(k - m + 1)(k + a + b + 1)\cdots(k + a + b + m)}{2^m m! (a + 1)_m}x^m}
\end{equation}
From $(\ref{binomialstuff})$ and the definition of $\mathfrak{P}_{\lambda}(\cdot | a, b)$, the left-hand side of $(\ref{binomialstuff})$ is $\Phi_{\lambda}(1 + x_1, \ldots, 1 + x_N | a, b)\frac{\mathfrak{P}_{\lambda}(1^N | a, b)}{\prod_{i=1}^N{\mathfrak{P}_{\lambda_i + N - i}(1 | a, b)}}$. By virtue of $(\ref{evalones})$, the latter expression becomes
\begin{eqnarray*}
2^{-\frac{N(N-1)}{2}}\Phi_{\lambda}(1 + x_1, \ldots, 1 + x_N | a, b)\prod_{i=1}^N{\frac{1}{(i-1)!(a + 1)_{i-1}}}\prod_{i<j}{(\hatl_i - \hatl_j)}.
\end{eqnarray*}
In view of $(\ref{P1t})$, the right side of $(\ref{binomialstuff})$ is $\sum_{\mu\in\GTp_N}{\det_{i, j}[a^{(i)}_{\mu_j + N - j}]s_{\mu}(x_1, \ldots, x_N)}$, where
\begin{equation*}
a^{(i)}_m = \frac{l_i(l_i-1)\cdots(l_i - m + 1)(l_i + a + b + 1)\cdots(l_i + a + b + m)}{2^m m! (a + 1)_m}.
\end{equation*}
If we use $(l_i - s)(l_i + a + b + 1 + s) = (l_i + \epsilon)^2 - (\epsilon + s)^2$, $s = 0, 1, \ldots, m-1$, then
\begin{eqnarray*}
\det_{i, j}[a^{(i)}_{\mu_j + N - j}] &=& \frac{\det_{i, j}[( \hatl_i | \boldsymbol{\epsilon})^{\mu_j + N - j}]_{i, j = 1}^N}{\prod_{i = 1}^N{2^{\mu_i + N - i}(\mu_i + N - i)!(a + 1)_{\mu_i + N - i}}}\\
&=& \frac{t_{\mu|N}^*(\lambda)\prod_{i < j}{(\hatl_i - \hatl_j)}}{\prod_{i = 1}^N{2^{\mu_i + N - i}(\mu_i + N - i)!(a + 1)_{\mu_i + N - i}}}.
\end{eqnarray*}
The result follows readily.
\end{proof}
\begin{prop}\label{coherencet}
Let $N\in\N$ be arbitrary and $\mu\in\GTp_N$, $\lambda\in\GTp_{N+1}$. Then
\begin{equation}
\frac{t_{\mu|N+1}^*(\lambda)}{c(N + 1, \mu)} = \sum_{\nu\in\GTp_N}{\Lambda^{N+1}_N(\lambda, \nu)\frac{t_{\mu|N}^*(\nu)}{c(N, \mu)}}.
\end{equation}
\end{prop}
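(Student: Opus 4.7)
The plan is to reduce the claim to a matching-of-coefficients argument by applying the binomial formula (Proposition A.2) to both sides of the branching relation $(\ref{linksdef})$ that defines the Markov kernels $\Lambda^{N+1}_N$. This turns the identity to prove into a uniqueness-of-expansion statement in the Schur basis of symmetric polynomials.

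Concretely, I would start from $(\ref{linksdef})$ and perform the affine substitution $x_i = 1 + y_i$, $i = 1, \ldots, N$, so that the final argument $1$ becomes $1+0$. The left-hand side turns into $\Phi_\lambda(1+y_1,\ldots,1+y_N,1+0\,|\,a,b)$, to which Proposition A.2 applies with $N$ replaced by $N+1$:
\begin{equation*}
\Phi_\lambda(1+y_1,\ldots,1+y_N,1+0\,|\,a,b) = \sum_{\mu\in\GTp_{N+1}}\frac{t^*_{\mu|N+1}(\lambda)\, s_\mu(y_1,\ldots,y_N,0)}{2^{|\mu|}c(N+1,\mu)}.
\end{equation*}
Applying Proposition A.2 (with $N$ variables) to each $\Phi_\nu$ on the right-hand side of $(\ref{linksdef})$ gives
\begin{equation*}
\sum_{\nu\in\GTp_N}\Lambda^{N+1}_N(\lambda,\nu)\sum_{\mu\in\GTp_N}\frac{t^*_{\mu|N}(\nu)\, s_\mu(y_1,\ldots,y_N)}{2^{|\mu|}c(N,\mu)}.
\end{equation*}

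Next I would use the well-known vanishing $s_\mu(y_1,\ldots,y_N,0) = 0$ whenever $\ell(\mu) > N$, together with the stability $s_\mu(y_1,\ldots,y_N,0) = s_\mu(y_1,\ldots,y_N)$ for $\mu$ with $\mu_{N+1}=0$. This collapses the outer sum on the LHS to one over $\mu\in\GTp_N$ (viewed inside $\GTp_{N+1}$ by appending a trailing zero). Both sides are now finite $\mathbb{Q}$-linear combinations of the Schur polynomials $\{s_\mu(y_1,\ldots,y_N):\mu\in\GTp_N\}$ — finiteness because $t^*_{\mu|K}(\lambda)=0$ whenever $|\mu|>|\lambda|$, as noted in the remark after Proposition A.2. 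Since these Schur polynomials form a basis of the algebra of symmetric polynomials in $N$ variables, equating coefficients of $s_\mu$ and cancelling the common factor $2^{|\mu|}$ yields the desired identity.

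I do not expect a substantial obstacle beyond bookkeeping. The only subtlety is making sure the factor $c(N+1,\mu)$ attached to $t^*_{\mu|N+1}(\lambda)$ on the LHS, computed from $\prod_{i=1}^{N+1}(N+2-i)_{\mu_i}(N+a+2-i)_{\mu_i}$ with the $i=N+1$ factor trivially equal to $1$, is matched on the right against $c(N,\mu)$ coming from the $N$-variable binomial formula — these are genuinely different constants, and the proposition asserts precisely the ratio that drops out of this matching. All of the real analytic and combinatorial work has already been done in the proof of Proposition A.2 (the expansion $(\ref{P1t})$ of the one-variable Jacobi polynomial and the Cauchy-type identity $(\ref{binomialstuff})$); the step from Proposition A.2 to Proposition A.3 is purely formal.
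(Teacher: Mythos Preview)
Your proposal is correct and follows essentially the same approach as the paper's proof: apply the binomial formula (Proposition~\ref{binomialformula}) to both sides of the branching relation $(\ref{linksdef})$ after shifting variables, use the stability/vanishing of Schur polynomials under setting one variable to zero, and then equate coefficients using the linear independence of the Schur basis. The paper's argument is identical in structure and detail.
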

\begin{proof}
From the definition of the kernels $\Lambda^{N+1}_N$, we have
\begin{eqnarray*}
\Phi_{\lambda}(1 + x_1, \ldots, 1 + x_N, 1 | a, b) = \sum_{\nu\in\GTp_N}{\Lambda^{N + 1}_N(\lambda, \nu)\Phi_{\nu}(1 + x_1, \ldots, 1 + x_N | a, b)}.
\end{eqnarray*}
Applying Proposition $\ref{binomialformula}$ twice, and interchanging the sums (it is allowed because there are only finitely many nonzero terms in total), we obtain
\begin{equation}\label{luckyeqn}
\sum_{\mu\in\GTp_{N+1}}{\frac{t_{\mu|N+1}^*(\lambda)s_{\mu}(x_1, \ldots, x_N, 0)}{2^{|\mu|}c(N+1, \mu)}} = \sum_{\mu\in\GTp_N}{s_{\mu}(x_1, \ldots, x_N)\sum_{\nu\in\GTp_N}{\Lambda^{N+1}_N(\lambda, \nu)\frac{t_{\mu|N}^*(\nu)}{2^{|\mu|}c(N, \mu)}}}
\end{equation}
By the stability of the Schur functions, see \cite[Section I]{M}, for any $\mu\in\GTp_{N+1}$, we have
\begin{equation*}
s_{\mu}(x_1, \ldots, x_N, 0) =
    \begin{cases}
        s_{\mu}(x_1, \ldots, x_N) & \textrm{if } \mu_{N+1} = 0\\
        0 & \textrm{if } \mu_{N+1} > 0.
    \end{cases}
\end{equation*}
Because the Schur functions $s_{\mu}(x_1, \ldots, x_N)$, $\mu\in\GTp_N$, are linearly independent in the space of polynomials on $x_1, \ldots, x_N$, the coefficient of $s_{\mu}(x_1, \ldots, x_N)$ in the left and right-hand sides of $(\ref{luckyeqn})$ must be equal. The result follows.
\end{proof}

\begin{rem}
Proposition $\ref{coherence}$ follows from Proposition $\ref{coherencet}$ and the equality
\begin{equation*}
T_{\mu|N}^*(\hatl_1, \ldots, \hatl_N) = \frac{t_{\mu|N}^*(\lambda)}{c(N, \mu)},
\end{equation*}
for any $\lambda\in\GTp_N$ and partitions $\mu$ with $\ell(\mu) \leq N$. We have used $t_{\mu|N}^*(\lambda)$ is this appendix because for $(a, b) = (\frac{1}{2}, \frac{1}{2}), (\frac{1}{2}, -\frac{1}{2})$ and $(-\frac{1}{2}, -\frac{1}{2})$, the polynomials $t_{\mu|N}^*$ arise in the representation theory of the rank $N$ Lie algebras $\mathfrak{g}(N)$ of type $B, C$ and $D$, respectively. To see how, recall the Harish-Chandra isomorphism
\begin{equation*}
    Z(\mathfrak{g}(N)) \longrightarrow M^*(N)
\end{equation*}
between the center $Z(\mathfrak{g}(N))$ of the enveloping algebra $U(\mathfrak{g}(N))$ and the subalgebra $M^*(N)\subset\C[x_1, \ldots, x_N]$ of polynomials $p(x_1, \ldots, x_N)$ which can be written in the form
\begin{equation*}
p(x_1, \ldots, x_N) = q((x_1 + N - 1 + \epsilon)^2, \ldots, (x_N + \epsilon)^2),
\end{equation*}
for some symmetric polynomial $q$ in $N$ variables ($M^*(N)$ is called sometimes an algebra of ``shifted'' symmetric polynomials). There are distinguished bases $\{\mathbb{T}_{\mu|N}\}_{\mu}$ of the centers $Z(\mathfrak{g}(N))$, see \cite{OO2}, the images of which are the polynomials $t_{\mu|N}^*$.

Theorem 3.1. from \cite{OO2} was important to us. Its content is the existence of natural \textit{averaging operators}
\begin{equation*}
    Z(\mathfrak{g}(N)) \longrightarrow Z(\mathfrak{g}(N+1))
\end{equation*}
mapping $\mathbb{T}_{\mu|N}$ to a constant factor of $\mathbb{T}_{\mu|N+1}$. Under the Harish-Chandra isomorphism, this statement can be interpreted as a \textit{coherence formula} for the polynomials $t_{\mu|N}^*$, at least for the three special pairs $(a, b)$. Proposition $\ref{coherencet}$ above is a generalization for all $a, b > -1$.
\end{rem}

\section{End of Proof of Proposition $\ref{semigroupN}$}\label{appendixB}

Fix $N\in\N$. In this appendix, we prove
\begin{equation}\label{toprove3}
\sum_{y_1, \ldots, y_N\in E}{\left( \prod_{i=1}^N{P_1(t; x_i, y_i)} \right) \Delta_N(\mathbf{y})} = e^{c_Nt}\Delta_N(\mathbf{x})
\end{equation}
under Assumption $\ref{operatorassumption}$. We write the two items of the assumptions more concretely; the first item is equivalent to the existence of real numbers $\{a_{i, j}\}_{i\geq j\geq 0}$ such that
\begin{equation}\label{firstassumption}
\sum_{y\in E}{q_{x, y}y^m} = a_{m, m}x^m + a_{m, m-1}x^{m-1} + \ldots + a_{m, 0}, \hspace{.2in}m\in\Zp.
\end{equation}
The second item of Assumption $\ref{operatorassumption}$ is the identity
\begin{equation}\label{secondassumption}
\sum_{i=1}^N{\left(\sum_{y\in E}{q_{x_i, y}\Delta_N(x_1, \ldots, x_{i-1}, y, x_{i+1}, \ldots, x_N)}\right)} = c_N\Delta_N(x_1, \ldots, x_N).
\end{equation}
Let $\mathcal{X}_N$ be the space of polynomial functions on $N$ variables $x_1, x_2, \ldots, x_N$ with real coefficients and such that the degree of each variable $x_i$ is at most $N-1$. In particular $\Delta_N\in\mathcal{X}_N$. We use the notation
\begin{equation*}
[N-1] \myeq \{0, 1, 2, \ldots, N-1\}.
\end{equation*}
Then $\mathcal{X}_N$ has the linear basis $\{x_1^{k_1}x_2^{k_2}\cdots x_N^{k_N}\}_{k_1, k_2, \ldots, k_N\in [N-1]}$ and $\dim\mathcal{X}_N = N^N<\infty$.

\subsection{Operator $\mathbf{Q}: \mathcal{X}_N \longrightarrow \mathcal{X}_N$} Define the operator $\mathbf{Q}$ by
\begin{equation*}
(\mathbf{Q}f)(x_1, \ldots, x_N) \myeq \sum_{i=1}^N{\left(\sum_{y\in E}{q_{x_i, y}f(x_1, \ldots, x_{i-1}, y, x_{i+1}, \ldots, x_N)}\right)},
\end{equation*}
where $f$ is any polynomial function. By assumption $\ref{firstassumption}$, $\mathbf{Q}$ is well-defined. The operator $\mathbf{Q}$ is given by the matrix $M$ of format $E^N\times E^N$ with entries
\begin{equation*}
M(\mathbf{x}, \mathbf{y}) = \mathbf{1}_{\{x_j = y_j, j\neq 1\}}q_{x_1, y_1} + \ldots + \mathbf{1}_{\{x_j = y_j, j\neq N\}}q_{x_N, y_N}, \hspace{.2in}\boldx, \boldy\in E^N.
\end{equation*}
Then the operator $\mathbf{Q}$ can be expressed in terms of $M$ by
\begin{equation*}
(\mathbf{Q}f)(\mathbf{x}) = \sum_{\mathbf{y}\in E^N}{M(\mathbf{x}, \mathbf{y})f(\mathbf{y})}.
\end{equation*}

\subsection{Operators $\mathbf{P}(t): \mathcal{X}_N \longrightarrow \mathcal{X}_N$, $t\geq 0$} Let $f$ be a polynomial function. The action of $\mathbf{P}(t)$ is given by
\begin{equation*}
(\mathbf{P}(t)f)(\mathbf{x}) \myeq \sum_{y_1, \ldots, y_N \in E}{\prod_{i=1}^N{P_1(t; x_i, y_i) f(\mathbf{y})}}.
\end{equation*}
The operator $\mathbf{P}(t)$ can be given by a matrix $M(t)$ of format $E^N\times E^N$ given by
\begin{equation*}
M(t)(\mathbf{x}, \mathbf{y}) = \prod_{i=1}^N{P_1(t; x_i, y_i)}.
\end{equation*}
The operators $\mathbf{P}(t)$, $t\geq 0$, can also be defined first for polynomial functions of the form $f(y_1, y_2, \ldots, y_N) = y_1^{k_1}y_2^{k_2}\cdots y_N^{k_N}$, $k_i\in [N-1]$, by
\begin{equation}\label{defPt}
(\mathbf{P}(t)f)(\mathbf{x}) = \prod_{i=1}^N{\left( \sum_{y\in E}{P_1(t; x_i, y)y^{k_i}} \right)},
\end{equation}
and then extended to $\mathcal{X}_N$ by linearity. We need to show that $(\ref{defPt})$ is a good definition for an operator on $\mathcal{X}_N$: we prove (a) the sum $\sum_{y\in E}{P_1(t; x, y)y^k}$ is convergent for any $x\in\Zp$, $k\in\Zp$, and (b) $F_k(x) = \sum_{y\in E}{P_1(t; x, y)y^k}$ is a polynomial in $x$ of degree $\leq k$.

For (a), fix $t\geq 0$ and $k\in\Zp$. From $(\ref{firstassumption})$, there exists a constant $c>0$ such that
\begin{equation*}
\sum_{y\in E}{Q(x, y)y^k} < c(1 + x^k),
\end{equation*}
for some constant $c>0$ and all $x\in\Zp$. We claim
\begin{equation}\label{claimappendix}
\sum_{y\in E}{P_1(t; x, y)y^k} \leq e^{t(c + q_x)}x^k + e^{t(c+q_x)}, \hspace{.2in}x\in\Zp,
\end{equation}
which would prove (a). Since $Q$ is non-explosive, $\displaystyle P_1(t; x, y) = \sum_{n=0}^{\infty}{P_1^{[n]}}(t; x, y)$, where
\begin{equation}\label{recurrenceappendix}
P_1^{[n]}(t; x, y) =
    \begin{cases}
        e^{-q_xt}\cdot\mathbf{1}_{\{x = y\}} & \textrm{if } n=0,\\
        \int_0^t{e^{-q_xs}\cdot\sum_{z\neq x}{q_{x, z}P_1^{[n-1]}(t-s; z, y)}ds} & \textrm{if } n\geq 1.
    \end{cases}
\end{equation}
The claim $(\ref{claimappendix})$ follows from the stronger claim
\begin{equation*}
\sum_{y\in E}{P_1^{[n]}(t; x, y)y^k} \leq \frac{(c+q_x)^nt^n}{n!}x^k + \frac{(c+q_x)^nt^n}{n!}, \hspace{.2in}n\geq 0,
\end{equation*}
and the latter claim can be easily proved by induction on $n$, the recurrence relation $(\ref{recurrenceappendix})$ and the inequality $e^{-tq_x} \leq 1$, $t\geq 0$.

We now prove (b), i.e., we show that $F_k(t, x) = \sum_{y\in E}{P_1(t; x, y)y^k}$ is a polynomial of degree $\leq k$ in $x$. From Kolmogorov's forward equation $P_1'(t) = P_1(t)Q$ and $(\ref{firstassumption})$, we have
\begin{gather*}
\frac{\partial}{\partial t}F_k(t; x) = \sum_{y\in E}{P_1'(t; x, y)y^k} = \sum_{y\in E}{\left( \sum_{z\in E}{P_1(t; x, z)Q(z, y)} \right)y^k}\\
= \sum_{z\in E}{P_1(t; x, z)\left( \sum_{y\in E}{Q(z, y)y^k} \right)} = \sum_{z\in E}{P_1(t; x, z)\left( a_{k, k}z^k + \ldots + a_{k, 0} \right)}\\
= a_{k, k}F_k(t; x) + a_{k, k-1}F_{k-1}(t; x) + \ldots + a_{k, 0}F_0(t; x).
\end{gather*}
Since $P_1(0) = 1$, we also have $F_k(0, x) = x^k$. One can conclude easily, by induction on $k$, and by the differential equation derived above, that $F_k(t; x)$ is indeed a polynomial of degree $\leq k$ on $x$. For instance, the ODE $\partial_t F_1(t; x) = a_{1, 1}F_1(t; x) + a_{1, 0}F_0(t; x)$ with initial condition $F_1(0, x) = x$ is solved by $F_1(t; x) = e^{ta_{11}}\left( x + \int_0^t{a_{1, 0}F_0(t; x)dt} \right)$, and since $F_0(t; x)$ is a constant independent of $x$, then $F_1(t; x)$ is a linear polynomial on $x$.

\subsection{End of argument}

Recall $\mathbf{Q}$ is realized by the $E^N\times E^N$ matrix $M$ with entries
\begin{equation*}
M(\mathbf{x}, \mathbf{y}) = \mathbf{1}_{\{x_j = y_j, j\neq 1\}}q_{x_1, y_1} + \ldots + \mathbf{1}_{\{x_j = y_j, j\neq N\}}q_{x_N, y_N},
\end{equation*}
while $\mathbf{P}(t)$, $t\geq 0$, are realized by the $E^N\times E^N$ matrices $M(t)$, $t\geq 0$, with entries
\begin{equation*}
M(t)(\boldx, \boldy) = \prod_{i=1}^N{P_1(t; x_i, y_i)}.
\end{equation*}
From Kolmogorov's backward equation $P_1'(t) = QP_1(t)$, it follows that
\begin{eqnarray*}
\frac{d}{dt}M(t)(\boldx, \boldy) &=& \sum_{i=1}^N{\prod_{\substack{1\leq j\leq N\\ j\neq i}}{P_1(t; x_j, y_j)} P_1'(t; x_i, y_i)}\\
&=& \sum_{i=1}^N{\prod_{\substack{1\leq j\leq N\\ j\neq i}}{P_1(t; x_j, y_j)} \sum_{y\in E}{q_{x_i, y}P_1(t; y, y_i)}}\\
&=& \sum_{\mathbf{z}\in E^N}{(\mathbf{1}_{\{x_j = z_j, j\neq 1\}}q_{x_1, z_1} + \ldots + \mathbf{1}_{\{x_j = z_j, j\neq N\}}q_{x_N, z_N})\cdot\prod_{i=1}^N{P_1(t; z_i, y_i)}}\\
&=& \sum_{\mathbf{z}\in E^N}{M(\boldx, \mathbf{z})M(t)(\mathbf{z}, \boldy)}.
\end{eqnarray*}
In matrix notation, we have proved $M'(t) = MM(t)$. By the initial condition $P_1(0) = 1$, we have also $M(0) = 1$. In the language of linear operators on $\mathcal{X}_N$, we have proved that $\mathbf{P}(t)$, $t\geq 0$, solves the Cauchy problem
\begin{eqnarray*}
\mathbf{P}'(t) &=& \mathbf{Q}\mathbf{P}(t),\\
\mathbf{P}(0) &=& 1.
\end{eqnarray*}
Since $\mathcal{X}_N$ is finite dimensional, we conclude that $\mathbf{P}(t) = e^{t\mathbf{Q}}$. The second assumption $\ref{secondassumption}$ gives $\mathbf{Q}\Delta_N = c_N\Delta_N$. Then $\mathbf{P}(t)\Delta_N = e^{c_Nt}\Delta_N$, for all $t\geq 0$, which is precisely the identity $(\ref{toprove3})$ we wanted to prove.
\end{appendix}

\end{document}